\DeclareMathAlphabet{\mathcal}{OMS}{cmsy}{m}{n}
\newcommand{\cancong}{\ensuremath{{\ \displaystyle \mathop{\cong}^{\mathrm{can}}}\ }}
\newcommand{\comment}[1]{} 
\newenvironment{customthm}[1]{\innercustomthm}{\endinnercustomthm}
\theoremstyle{remark}
\newtheorem{theorem}{\rm{\textbf{Theorem}}}[section]
\newtheorem{corollary}[theorem]{\rm{\textbf{\textbf{Corollary}}}}
\newtheorem{lemma}[theorem]{\rm{\textbf{Lemma}}}
\newtheorem{proposition}[theorem]{\rm{\textbf{\textbf{Proposition}}}}
\newtheorem{conjecture}[theorem]{\rm{\textbf{Conjecture}}}
\newtheorem{definition}[theorem]{\rm{\textbf{Definition}}}
\newtheorem{example}[theorem]{\rm{\textbf{Example}}}
\newtheorem{remark}[theorem]{\rm{\textbf{Remark}}}
\author[Q.~Yan]{Qijun Yan}
\title[A relation between zip stacks and moduli stacks of truncated local shtukas]{A relation between zip stacks and moduli stacks of truncated local shtukas}
\address{Beijing Institute of Mathematical Sciences and Applications (BIMSA), Beijing, 101\,~408,  China}
\email{yanqmath\symbol{64}bimsa.cn}
\date{}
\begin{document}

\begin{abstract}
Let \(G\) be a reductive group over a field \(k\), and let \(\mu\) be a cocharacter of \(G\). 
We prove that Viehmann's double coset spaces associated with \((G, \mu)\) are representable 
by certain Lusztig varieties, and establish a similar result for the mixed characteristic case. 
This representability enables a comparison between the moduli stacks of truncated local shtukas 
and zip stacks. Over a perfect field of positive characteristic, we establish a homeomorphism 
between the coarse moduli stack of \(1\text{-}1\)-truncated local \(G\)-shtukas and that of \(G\)-zips, thereby enriching our understanding of zip period maps in the context of Shimura varieties.
\end{abstract}
	\maketitle
	
	\section{Introduction}
 
	\subsection{Background and motivation}\label{Motivation}
	
	Let $p$ be a prime number and consider a reductive group $G$ defined over $ \mathbb{F}_p$. Denote by $\mathcal{L} G$ and $\mathcal{K} := \mathcal{L}^{+} G$ the loop group and positive loop group of $G$, respectively. Let $\mathcal{K}_1$ be the kernel of the reduction map $ \mathcal{K} \to G$; see \S \ref{S:loopgp} for definitions. Let $\mu: \mathbb{G}_{m,k} \to T$ be a dominant cocharacter  of $G$. Denote by $\mathcal{C}^{\mu} = \mathcal{K} \mu(t) \mathcal{K} \subseteq \mathcal{L}G$ the Cartan cell determined by $\mu$.

  \begin{definition}
We call the following fpqc quotient sheaf \emph{Viehmann's double coset space of type~$\mu$}:
\[
\prescript{}{1}{\mathcal{C}_1^\mu} := \mathcal{K}_1 \backslash \mathcal{C}^{\mu} / \mathcal{K}_1 
\subseteq \mathcal{K}_1 \backslash \mathcal{L}G / \mathcal{K}_1.
\]
\end{definition}

    Let $ k $ be an algebraically closed field of characteristic $ p $, and let $\sigma: k \to k$ be the absolute Frobenius of $ k $. Also denote by $\sigma$ the induced endomorphism on $ k[[t]] $. This Frobenius induces an endomorphism on $\mathcal{K}$ and on $\mathcal{L} G$, again denoted by $\sigma$. Denote by $\prescript{}{1}{\mathcal{C}_1^{\mu}}(k)/\mathrm{Ad}_{\sigma} \mathcal{K}(k)$ the orbit space of $\mathcal{K}(k)$-$\sigma$ conjugacy classes. Viehmann studied the $ \mathcal{K}(k)$-$\sigma$ conjugacy classes in the big coset space $  \mathcal{K}_1(k) \backslash \mathcal{L}G(k) / \mathcal{K}_1(k) $ in \cite{ViehmannTruncation1}. One of her results can be rephrased as: there exists an explicitly defined homeomorphism between the finite topological spaces
    \begin{equation}\label{Eq:homeo1}
		\prescript{J}{}{W} \ \overset{\epsilon_1}{\cong}\ \prescript{}{1}{\mathcal{C}_1^{\sigma(\mu)}}(k)/\mathrm{Ad}_{\sigma} \mathcal{K}(k),
	\end{equation}
   where $\prescript{J}{}{W}$ denotes the minimal length representatives of the coset space $W/W_{M_\mu}$, with $W$ being the Weyl group of $G$, and $J$ (a subset of the simple reflections of $W$) being the type of the centralizer $M_\mu = \mathrm{Cent}_{G}(\mu)$ of $\mu$. The set $\prescript{J}{}{W}$ is equipped with a partial order topology defined in \cite[Def.~6.1]{PinkWedhornZiegler1}, generalizing that in \cite{HeGstabwond}. The topology on the right-hand side of \eqref{Eq:homeo1} is determined by orbit-closure relations.

	On the other hand, the theory of $G$-zips ~\cite{Moonen&WedhornDiscreteinvariants, PinkWedhornZiegler1, PinkWedhornZiegler2} associated with the datum $(G, \mu)$ yields a smooth Artin stack of dimension $0$, denoted ${G\text{-}\mathsf{Zip}}^{\mu}$. It is known that ${G\text{-}\mathsf{Zip}}^{\mu}$ is a quotient stack $[G/\mathsf{E}_{\mu}^{\mathrm{PWZ}}]$ of $G$ by some zip group $\mathsf{E}_{\mu}^{\mathrm{PWZ}} \subseteq P_-^\sigma \times P_+$~\cite{PinkWedhornZiegler2}, and that its topology is described by the homeomorphism
	\begin{equation}\label{Eq:homeo2}
		\prescript{J}{}{W} \overset{\epsilon_2}{\cong} G(k)/\mathsf{E}_{\mu}^{\mathrm{PWZ}}(k) \cong |{G\text{-}\mathsf{Zip}}^{\mu}|. 
	\end{equation}
	Establishing either $\epsilon_1$ or $\epsilon_2$ is nontrivial; however, no direct link is known between the right-hand sides of~\eqref{Eq:homeo1} and \eqref{Eq:homeo2}, which might be considered satisfactory from a geometric perspective. The desire to establish such a link is understandable: while the right-hand side of \eqref{Eq:homeo1} parameterizes 1-truncated local $G$-shtukas of type $\mu$ over $k$, the right-hand side of \eqref{Eq:homeo2} parameterizes 1-truncated $p$-divisible groups of type $\mu$ over $k$, reflecting a \emph{number field side versus function field side} contrast. A key step toward establishing this link is to geometrize the right-hand side of~\eqref{Eq:homeo1}, based on Theorem~\ref{MainThm:rep} below, which underpins this work.

	Recall that the zip stack ${G\text{-}\mathsf{Zip}}^{\mu}$ serves as the period domain for zip period maps of Shimura varieties of type $\mu$. For historical context, the reader is referred to the introduction of \cite{Yan24}. Additionally, in \cite{Yan18}, we constructed a variant of the zip period map, targeting the fpqc sheaf 
	\[
	\prescript{}{1}{\mathcal{C}_1^{\mu'}}/\mathrm{Ad}_{\sigma} \mathcal{K}=\prescript{}{1}{\mathcal{C}_1^{\mu'}}/\mathrm{Ad}_{\sigma} G
	\]
	where $\mu' = p \sigma(\mu)$ (the multiplication between cocharacters is written additively). There, we establish a \emph{bijection} between the right-hand sides of \eqref{Eq:homeo1} and \eqref{Eq:homeo2}, compatible with the maps in these equations. However, one cannot directly declare our bijection as a homeomorphism without relying on established homeomorphism facts for both \eqref{Eq:homeo1} and \eqref{Eq:homeo2} (cf. \cite{WedhornViehmann2013}). 
	The quotient sheaves mentioned are too coarse to be regarded as ambient geometrizations. The usual belief was that, since the action of \(\mathcal{K}_1 \times \mathcal{K}_1\) on \(\mathcal{C}^{\mu'}\) is not free, one should not expect \(\prescript{}{1}{\mathcal{C}_1^{\mu'}}\) to be representable. Ironically, we discovered Theorem~\ref{MainThm:rep} while seeking a more refined argument for this “negative expectation”, intended to explain why the variant of the zip period map in~\cite{Yan18} does not admit an algebraic stack as its target. 
	
	\subsection{Main results}
	Let $k$ be an \emph{arbitrary field}. Associated with the cocharacter $\mu$ are a pair of parabolic subgroups $P_\pm = P_{\mu^{\pm 1}}$ of $G$. Let $U_{\pm} \subseteq P_{\pm}$ be their unipotent radicals, and let $M = M_\mu$ be their common Levi subgroup. 
	
	\begin{customthm}{A}[\protect{Representability of $\prescript{}{1}{\mathcal{C}_1^\mu}$}]\label{MainThm:rep}
		The fpqc sheaf $\prescript{}{1}{\mathcal{C}_1^\mu} = \mathcal{K}_1 \backslash \mathcal{K}\mu(t) \mathcal{K}/\mathcal{K}_1$ is represented by a geometrically connected, smooth, separated scheme of dimension $\dim G$ and of finite type over $k$. It is strongly quasi-affine and a $G^2 := G \times_k G$-spherical variety. Specifically, we have an isomorphism of fqpc sheaves
		\begin{equation}\label{Eq:KeyIsom}
			\mathsf{E}_{\mu} \backslash G^2 \cong \prescript{}{1}{\mathcal{C}_1^\mu}, \quad \ (g, h) \mapsto  \  g^{-1}\mu(t)h.
		\end{equation}
		where $\mathsf{E}_{\mu} \subseteq P_- \times P_+$ is the zip group of $\mu$ (see Definition~\ref{Def:EmuNorm}) consisting of pairs $(p_-, p_+)$ sharing the same Levi components. Further, the right action of $\mathsf{E}_{\mu}$ on $G^2$ is defined as
		\[
		(g, h) \cdot (p_-, p_+) = (p_-^{-1}g, p_+^{-1}h).
		\]
		For each integer $n \geq 1$, we have the canonical isomorphism
		\[ 
		\prescript{}{1}{\mathcal{C}_1^{\mu}} \ \cancong \ \prescript{}{1}{\mathcal{C}_1^{n\mu}}, \quad g \mu(t) h \mapsto g \mu(t)^n h.
		\]
		In case $\mu$ is minuscule, the canonical projection from $\mathcal{C}_1^\mu := \mathcal{K}\mu(t) \mathcal{K}/\mathcal{K}_1$ to the open affine Schubert variety $\mathrm{Gr}_\mu := \mathcal{K}\mu(t) \mathcal{K}/\mathcal{K}$ factors through the quotient $\prescript{}{1}{\mathcal{C}_1^\mu}$.
	\end{customthm}
	
	This theorem summarizes the results presented in Theorem~\ref{Thm:keyRep}, Proposition~\ref{Prop:qaffine}, and Lemma \ref{Lem:ConnSchbt}. The technical foundation for establishing the explicit representability $\prescript{}{1}{\mathcal{C}_1^\mu} \cong \mathsf{E}_{\mu} \backslash G^2$ consists of Lemmata~\ref{Lem:IntegLem} and \ref{Lem:keyInclu}, wherein the interrelation of the unipotent groups $U_\pm$ and the pro-unipotent group $\mathcal{K}_1$ is discussed. An equivalent way of stating the strong quasi-affineness of $\mathsf{E}_{\mu} \backslash G^2$ is that the closed subgroup $\mathsf{E}_{\mu} \subseteq G^2$ is a Grosshans subgroup; see \S \ref{S:Grosshans} for details. Proposition~\ref{Prop:qaffine} records a proof of the (known) fact that $\mathsf{E}_{\mu} \backslash G^2$, and hence also $\prescript{}{1}{\mathcal{C}_1^\mu}$, is $G^2$-spherical. A more fancy way of stating our representability result is that the coarse moduli space of the quotient stack 
    \[
    [\mathcal{K}_1 \backslash \mathcal{K} \mu(t) \mathcal{K} / \mathcal{K}_1],
    \]
    which is the moduli stack of 1-1 truncated modifications of type $\mu$, exists as a scheme; see Remark \ref{Rmk:trunSch}. The zip group $\mathsf{E}_{\mu}$ 
    here is a special case of the zip groups introduced in \cite{PinkWedhornZiegler1}.
	
	It turns out that the variety of our concern, $\prescript{}{1}{\mathcal{C}_1^\mu}$, has various incarnations in different contexts: the varieties $\mathsf{E}_{\mu} \backslash G^2$ appear in the extant literature under different names (up to isomorphism), e.g., Lusztig's varieties \cite{LuszParobCharShvII}, coset varieties \cite{PinkWedhornZiegler1}, boundary degenerations \cite{SakVenPeriods}, etc. The connection with Lusztig's varieties and coset varieties will be detailed in \S\,\ref{S:Lusztig}.
	
	Assume now that $k$ is a perfect field of characteristic $p $. Let $\mathcal{G}$ be a reductive group over $W(k)$ and $\mu: \mathbb{G}_{m, W(k)} \to \mathcal{G}$ a cocharacter of $\mathcal{G}$ defined over $W(k)$. Write $G = \mathcal{G}_k$ and abuse $\mu$ to denote its base change to $k$. Denote the mixed characteristic counterpart of $\mathcal{K}$ and $\mathcal{K}_1$ by $K = L^+ \mathcal{G}$ and $K_1$, respectively. One may also consider the mixed characteristic version of $\prescript{}{1}{\mathcal{C}_1^\mu}$, a sheaf over the category $\mathrm{Perf}_k$ of perfect $k$-algebras, defined as (see \S \ref{S:MixRep} for details)
	\[\prescript{}{1}{\mathrm{C}_1^\mu} := K_1 \backslash K \mu(p) K / K_1.\]
	
	\begin{customthm}{$\text{A}'$}\label{MainThm:perfect}
		The sheaf $\prescript{}{1}{\mathrm{C}_1^{\mu}}$ over $\mathrm{Perf}_k$ is perfectly represented by $\prescript{}{1}{\mathcal{C}_1^\mu}$. In other words, we have the canonical isomorphism 
		\[\prescript{}{1}{\mathrm{C}_1^\mu} \cancong (\prescript{}{1}{\mathcal{C}_1^\mu})^{\mathrm{perf}} \cong (\mathsf{E}_{\mu} \backslash G^2)^{\mathrm{perf}}.\]
	\end{customthm}
	
	This is Theorem~\ref{Thm:eqvsmix}. With slight modifications, the proof of Theorem~\ref{MainThm:perfect} is identical to that of Theorem~\ref{MainThm:rep}. In fact, Viehmann also proved the homeomorphism
	\begin{equation}\label{Eq:homeo4}
		{}^J W \overset{\epsilon_3}{\cong} \prescript{}{1}{\mathrm{C}_1^{\sigma(\mu)}}(\bar{k})/\mathrm{Ad}_\sigma K(\bar{k}) = \prescript{}{1}{\mathrm{C}_1^{\sigma(\mu)}}(\bar{k})/\mathrm{Ad}_\sigma G(\bar{k}), 
	\end{equation}
	by running parallel arguments~\cite{ViehmannTruncation1}, where $\bar{k}$ denotes an algebraic closure of $k$. However, the presence of ${}^J W$ in the two homeomorphisms above is merely a coincidental circumstance, as the right-hand sides of \eqref{Eq:homeo1} and \eqref{Eq:homeo4} were not directly related a priori. With Theorem~\ref{MainThm:perfect}, we can now provide a geometric explanation:
	\begin{center}
		\emph{The right-hand sides of \eqref{Eq:homeo1} and \eqref{Eq:homeo4} are already identified before taking $G(k)$-$\sigma$ conjugacy classes.}
	\end{center}
	We refer to Corollary~\ref{Cor:Comp} for more detailed comparisons. An immediate consequence of Theorem~\ref{MainThm:perfect} is the algebraicity of $[\prescript{}{1}{\mathcal{C}_1^\mu}/\mathrm{Ad}_{\sigma} G]$.
	
	\begin{corollary}
		The quotient stack $[\prescript{}{1}{\mathcal{C}_1^\mu}/\mathrm{Ad}_{\sigma} G]$ is a smooth Artin $k$-stack of dimension $0$.  
	\end{corollary}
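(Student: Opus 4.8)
The plan is to read this off directly from Theorem~\ref{MainThm:perfect} (equivalently Theorem~\ref{MainThm:rep}) together with the elementary behaviour of quotient stacks under flat descent, so the argument is genuinely short. First I would record that, by Theorem~\ref{MainThm:perfect} and the isomorphism~\eqref{Eq:KeyIsom}, the sheaf $X := \prescript{}{1}{\mathcal{C}_1^\mu}$ is representable by a geometrically connected, smooth, separated $k$-scheme of finite type with $\dim X = \dim G$; being smooth and connected over a field, $X$ is irreducible, hence equidimensional. Next I would check that $\mathrm{Ad}_\sigma$ furnishes an honest scheme-theoretic action of $G$ on $X$: the $\sigma$-conjugation action of $\mathcal{K}$ on the Cartan cell $\mathcal{C}^\mu$ descends to the double-coset sheaf, and since $\mathcal{K}_1$ is normal in $\mathcal{K}$ and stable under $\sigma$ the subgroup $\mathcal{K}_1$ acts trivially, so the action factors through $G = \mathcal{K}/\mathcal{K}_1$ (this is already invoked in \S\ref{Motivation}, in the identity $\prescript{}{1}{\mathcal{C}_1^{\mu'}}/\mathrm{Ad}_{\sigma}\mathcal{K} = \prescript{}{1}{\mathcal{C}_1^{\mu'}}/\mathrm{Ad}_{\sigma} G$). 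Transported through~\eqref{Eq:KeyIsom}, this action is induced by the right-translation morphism $G\times G^2\to G^2$, $(c,g,h)\mapsto (gc,\,h\sigma(c))$ (up to the normalizations of~\eqref{Eq:KeyIsom}), which commutes with the $\mathsf{E}_\mu$-action and therefore descends to a morphism $G\times X\to X$; in particular $[\prescript{}{1}{\mathcal{C}_1^\mu}/\mathrm{Ad}_{\sigma} G]\cong [\mathsf{E}_\mu\backslash G^2/G]$.

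Granting this, the conclusion is formal. The group $G$ is a smooth affine algebraic $k$-group acting on the separated finite-type $k$-scheme $X$, so $[X/G]$ is a finite-type Artin $k$-stack. Its canonical presentation $X\to [X/G]$ is a $G$-torsor, hence smooth and surjective, so it is an fppf cover; since $X\to\operatorname{Spec}k$ is smooth and smoothness of a morphism is fppf-local on the source, it follows that $[X/G]\to\operatorname{Spec}k$ is smooth. For the dimension, $\dim[X/G] = \dim X - \dim G = \dim G - \dim G = 0$, using $\dim X = \dim G$ from Theorem~\ref{MainThm:rep}. Hence $[\prescript{}{1}{\mathcal{C}_1^\mu}/\mathrm{Ad}_{\sigma} G]$ is a smooth Artin $k$-stack of dimension $0$.

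I do not expect a genuine obstacle here — the corollary is, as advertised, immediate — and the only point requiring care is the verification that $\mathrm{Ad}_{\sigma}$ acts through a morphism of schemes rather than merely on $k$-points, which is settled by the explicit formula on $G^2$ recorded above. As a bonus, passing that formula through the isomorphism $G^2/G\cong G$, $(g,h)\mapsto h\sigma(g)^{-1}$, turns the residual $\mathsf{E}_\mu$-action into $w\cdot(p_-,p_+)=p_+^{-1}\,w\,\sigma(p_-)$, thereby identifying $[\prescript{}{1}{\mathcal{C}_1^\mu}/\mathrm{Ad}_{\sigma} G]$ with the $G$-zip stack appearing in~\eqref{Eq:homeo2} (of a type determined by $\mu$), so the statement can alternatively be deduced from the known structure of $G$-zip stacks; I would nonetheless present the self-contained quotient-stack argument as the main line and defer the comparison with ${G\text{-}\mathsf{Zip}}^{\mu}$ to Corollary~\ref{Cor:Comp}.
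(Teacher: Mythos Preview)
Your argument is correct and essentially identical to the paper's own proof (Corollary~\ref{Cor:ArtStack}): representability and smoothness of $\prescript{}{1}{\mathcal{C}_1^\mu}$ from Theorem~\ref{Thm:keyRep}, the smooth $G$-torsor presentation $\prescript{}{1}{\mathcal{C}_1^\mu}\to[\prescript{}{1}{\mathcal{C}_1^\mu}/\mathrm{Ad}_\sigma G]$, and the dimension count $\dim G-\dim G=0$. Your extra verification that the action factors through $G$ is handled in the paper in \S\ref{S:QuotofCIImu} just before the corollary, and your bonus identification with a zip stack anticipates Lemma~\ref{Lem:ManyIsom2}.
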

	
	This provides a reasonable geometrization of the right-hand side of \eqref{Eq:homeo1}. We call this stack the \emph{(coarse) moduli stack of 1-1 truncated local $G$-shtukas} of type $\mu$. This may be seen as a function field analogue of ${G\text{-}\mathsf{Zip}}^{\mu}$. In fact, it turns out to be more than a mere philosophical cousin; see Theorems~\ref{MainThm:B} and \ref{MainThm:com} below.
	
	\begin{corollary} 
		The embedding $\mathrm{Emb}_{\mu}: G \to \mathcal{C}^{\mu}, g \mapsto g \mu(t)$ induces an immersion of $k$-schemes
		\begin{align*}
			\tilde{\alpha}: G/U_- \ \cong\ G \mu(t) \mathcal{K}_1 / \mathcal{K}_1 \ \subseteq\ \mathcal{C}^{\mu}_1 := \mathcal{C}^\mu / \mathcal{K}_1.
		\end{align*} 
		Moreover, the composition $ \alpha: G/U_{-} \to \mathcal{C}_1^{\mu} \xrightarrow{\mathrm{pr}} \prescript{}{1}{\mathcal{C}_1^\mu}$ is a closed immersion. The canonical projection from $G \mu(t) \mathcal{K}_1 / \mathcal{K}_1 $ to $\prescript{}{1}{\mathcal{C}_1^\mu}$ is also a closed immersion.
	\end{corollary}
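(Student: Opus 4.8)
The plan is to reduce the statement to a fibre computation inside the group-theoretic model $\prescript{}{1}{\mathcal{C}_1^\mu}\cong\mathsf{E}_\mu\backslash G^2$ of Theorem~\ref{MainThm:rep}, by first pinning down the stabiliser of the base point. Let $G\subseteq\mathcal{K}$ (the constant loops) act on $\mathcal{C}^\mu_1=\mathcal{C}^\mu/\mathcal{K}_1$ by left translation. For $g\in G(R)$ one has $g\mu(t)\mathcal{K}_1=\mu(t)\mathcal{K}_1$ if and only if $\mu(t)^{-1}g\mu(t)\in\mathcal{K}_1(R)$. A standard dynamic computation shows $\{g\in G:\mu(t)^{-1}g\mu(t)\in\mathcal{K}\}=P_-=P_{\mu^{-1}}$: on a root subgroup $U_\beta$ one has $\mu(t)^{-1}x_\beta(c)\mu(t)=x_\beta\!\big(t^{-\langle\beta,\mu\rangle}c\big)$, which lies in $\mathcal{K}$ exactly when $\langle\beta,\mu\rangle\leq 0$, while conjugation by $\mu(t)$ fixes $M$ pointwise; refining, $\mu(t)^{-1}g\mu(t)$ lies in $\mathcal{K}_1$ if and only if in addition the $M$-component of $g\in P_-$ is trivial, i.e.\ $g\in U_-$. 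Hence $\mathrm{Emb}_\mu$ descends to a $G$-equivariant isomorphism of fpqc sheaves $\tilde\alpha\colon G/U_-\xrightarrow{\ \sim\ }G\mu(t)\mathcal{K}_1/\mathcal{K}_1$ onto the orbit sheaf inside $\mathcal{C}^\mu_1$; since $U_-$ is smooth affine, $G/U_-$ is a smooth quasi-affine $k$-scheme. As $\mathcal{C}^\mu_1$ and $\prescript{}{1}{\mathcal{C}_1^\mu}$ are separated $k$-schemes, $\mathrm{pr}$ is separated, so it will suffice to prove that $\alpha=\mathrm{pr}\circ\tilde\alpha$ is a closed immersion: by the usual cancellation lemmas it then follows that $\tilde\alpha$ is an immersion and that $\mathrm{pr}$ restricted to $G\mu(t)\mathcal{K}_1/\mathcal{K}_1\cong G/U_-$, being $\alpha\circ\tilde\alpha^{-1}$, is a closed immersion.

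Next I would transport $\alpha$ to the model $\mathsf{E}_\mu\backslash G^2$. Under the isomorphism \eqref{Eq:KeyIsom} the class of $a\mu(t)=a\mu(t)\cdot 1$ corresponds to the orbit of $(a^{-1},1)$, so $\alpha$ is the map $aU_-\mapsto\mathsf{E}_\mu\cdot(a^{-1},1)$, equivalently — after identifying $G/U_-\cong U_-\backslash G$ by inversion — the map $U_-g\mapsto\mathsf{E}_\mu\cdot(g,1)$. Let $\pi_+\colon\mathsf{E}_\mu\backslash G^2\to P_+\backslash G$ be the morphism induced by $(g,h)\mapsto P_+h$; it is well defined since $\mathsf{E}_\mu\subseteq P_-\times P_+$. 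I claim $\operatorname{im}\alpha$ equals the fibre $\pi_+^{-1}([P_+])$ over the base point. Indeed this fibre is $\{\mathsf{E}_\mu\cdot(g,h):h\in P_+\}$, and writing $h=u_+m$ with $m$ its Levi component, the pair $(m,h)$ lies in $\mathsf{E}_\mu$ and $(g,h)\cdot(m,h)=(m^{-1}g,1)$, so every point of the fibre has the form $\mathsf{E}_\mu\cdot(g',1)$; the reverse inclusion is clear. Moreover $U_-=\ker(\mathsf{E}_\mu\to P_+)$, the $\mathsf{E}_\mu$-action on $G\times P_+$ is free, and $G\times P_+\to\pi_+^{-1}([P_+])$ is an $\mathsf{E}_\mu$-torsor; descending along it identifies $U_-g\mapsto\mathsf{E}_\mu\cdot(g,1)$ with an isomorphism $U_-\backslash G\xrightarrow{\ \sim\ }\pi_+^{-1}([P_+])$, whose inverse is induced by $(g,h)\mapsto U_-\big(\operatorname{Levi}(h)^{-1}g\big)$.

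To conclude, note that the partial flag variety $P_+\backslash G$ (isomorphic to $G/P_+$) is projective, hence a separated finite-type $k$-scheme, so the base point $[P_+]$ is closed and $\pi_+^{-1}([P_+])$ is a closed subscheme of $\mathsf{E}_\mu\backslash G^2\cong\prescript{}{1}{\mathcal{C}_1^\mu}$. By the preceding paragraph $\alpha$ factors as the isomorphism $G/U_-\cong\pi_+^{-1}([P_+])$ followed by this closed immersion, hence $\alpha$ is a closed immersion, and the remaining assertions follow. The main obstacle is the scheme-theoretic half of that paragraph — that the pointwise bijection $U_-\backslash G\to\pi_+^{-1}([P_+])$ is genuinely an isomorphism of schemes — which requires unwinding the $\mathsf{E}_\mu$-action on $G\times P_+$ through the extension $1\to U_-\to\mathsf{E}_\mu\to P_+\to 1$ and either descending along the torsor or checking directly that $(g,h)\mapsto U_-\big(\operatorname{Levi}(h)^{-1}g\big)$ is a well-defined inverse morphism. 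Everything else — the stabiliser computation, separatedness of $\mathrm{pr}$, and the cancellation lemmas — is routine granted Theorem~\ref{MainThm:rep}.
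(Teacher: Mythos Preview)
Your argument is correct and follows essentially the same route as the paper: transport $\alpha$ through the isomorphism $\psi\colon \mathsf{E}_\mu\backslash G^2\cong\prescript{}{1}{\mathcal{C}_1^\mu}$ and identify its image with the closed subset $\mathsf{E}_\mu\backslash(G\times P_+)$. The paper phrases this last step as a Cartesian square
\[
\xymatrix{G\times_k P_+\ar[r]^{d}\ar[d]_{\pi_1}& G^{2}\ar[d]^{\mathrm{pr}}\\ G/U_-\ar[r]^{\alpha}&\mathsf{E}_\mu\backslash G^2}
\]
with $d(g,u_+m)=(mg^{-1},p_+)$, observes $d$ is a closed immersion onto $G\times P_+$, and concludes by faithfully flat descent along $\mathrm{pr}$; your fibre description over the base point of $P_+\backslash G$ is the same identification in different clothing, and your torsor descent is exactly the fppf descent the paper invokes. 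For part~(1) the paper reverses your order: it gets well-definedness of $\tilde\alpha$ from Lemma~\ref{Lem:IntegLem} and then deduces \emph{injectivity} of $\tilde\alpha$ from the already-proved closed immersion $\alpha$, rather than computing the stabiliser directly as you do. One minor caveat: your root-group calculus for the stabiliser tacitly assumes $G$ is split over $k$; over an arbitrary field you should instead argue directly from the dynamic definitions (the condition $\mu(t)^{-1}g\mu(t)\in\mathcal{K}$ is literally the limit condition defining $P_-$, and landing in $\mathcal{K}_1$ is the condition that this limit equals $1$, i.e.\ $g\in U_-$), which is how the paper's Lemma~\ref{Lem:IntegLem} is set up.
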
	

	We direct the reader to Corollary~\ref{Cor:Embed} for details and for a dual version that embeds $  U_+ \backslash G$ into $\prescript{}{1}{\mathcal{C}^\mu}$ and $\prescript{}{1}{\mathcal{C}_1^\mu}$.
    
    Thanks to the explicit isomorphism $\prescript{}{1}{\mathcal{C}_1^\mu} \cong \mathsf{E}_{\mu} \backslash G^2$, abstract nonsense arguments are essentially sufficient to show that $[\prescript{}{1}{\mathcal{C}_1^\mu}/\mathrm{Ad}_\sigma G]$ is also a zip stack. The theorems presented below can also be seen as corollaries to Theorem~\ref{MainThm:rep}.
	
	\begin{customthm}{B}\label{MainThm:B}
		We have an (explicitly defined) isomorphism of algebraic $k$-stacks
		\begin{equation}\label{Eq:zipisom2}
			[G/\mathsf{R}_{\sigma} \mathsf{E}_{\mu}] \cong [\prescript{}{1}{\mathcal{C}_1^\mu}/\mathrm{Ad}_{\sigma} G ], \quad g \mapsto \mu(t)g,  
		\end{equation}
		where $\mathsf{R}_{\sigma} \mathsf{E}_{\mu}$ denotes the $\mathsf{E}_{\mu}$ action on $G$ by \emph{right partial Frobenius multiplication}
		\[
		g \cdot (p_-, p_+) = p_+^{-1} g \sigma(p_-).
		\]
	\end{customthm} 
	
	Refer to \S\,\ref{S:zipstack} and especially Lemma~\ref{Lem:ManyIsom2} for more details. We show there that more general quotient stacks of $\prescript{}{1}{\mathcal{C}_1^\mu}$ admit zip stack descriptions. An interesting case occurs when $\sigma$ is formally replaced by the identity map $\mathrm{id}: G \to G$. In other words, we are dealing with the quotient stack $[\prescript{}{1}{\mathcal{C}_1^\mu}/\mathrm{Ad} G]$, which does not involve the Frobenius operator $\sigma$ and thus makes sense for any base field. Similar abstract arguments as mentioned above (in fact, slightly simpler ones) will help us to show that 
	\begin{equation}\label{Eq:zipisom1}
		[\prescript{}{1}{\mathcal{C}_1^\mu}/\mathrm{Ad} G] \cong [G/\mathsf{E}_{\mu}].
	\end{equation}
	
	In \S \ref{S:prozip}, we define a pro-zip group $\mathcal{E}_\mu \subseteq \mathcal{L}^+G \times \mathcal{L}^+G$ and formulate the following isomorphisms: 
	\begin{equation}\label{Eq:IntIsom}
		\mathcal{E}_\mu \backslash (\mathcal{L}^+G \times \mathcal{L}^+G) \cong \mathcal{C}^{\mu},  \quad [\mathcal{L}^+G/\mathsf{R}_{\sigma} \mathcal{E}_\mu] \cong [\mathcal{C}^{\mu} /\mathrm{Ad}_{\sigma} \mathcal{L}^+G], \quad [\mathcal{L}^+G/\mathcal{E}_\mu] \cong [\mathcal{C}^{\mu} /\mathrm{Ad}\mathcal{L}^+G].
	\end{equation}
      The last of these leads to an interpretation of local $G$-shtukas as $\mathcal{L}^+ G$-zips. The left-hand sides of the three isomorphisms in~\eqref{Eq:IntIsom} has the advantage of being integral, thereby making their reduction modulo \(t^n\) well-defined. For instance, the reduction modulo $t$ of the isomorphisms in~\eqref{Eq:IntIsom} recovers those in~\eqref{Eq:KeyIsom}, \eqref{Eq:zipisom2}, and \eqref{Eq:zipisom1}.

	\begin{customthm}{C}\label{MainThm:com}
		We have a chain of morphisms between algebraic $k$-stacks 
		\[  \cdots \longrightarrow [\prescript{}{1}{\mathcal{C}_1^\mu}/\mathrm{Ad}_{\sigma} G] \longrightarrow {G\text{-}\mathsf{Zip}}^{\mu} \longrightarrow [\prescript{}{1}{\mathcal{C}_1^{\sigma(\mu)}}/\mathrm{Ad}_{\sigma} G] \longrightarrow \cdots \]
		These morphisms are induced by ``partial Frobenii" in a suitable sense. They induce homeomorphisms on the underlying topological spaces. Moreover, the following diagram commutes 
		\[
		\begin{tikzcd}[row sep=2.5em,column sep=1.5em]
			{} & \arrow{dl}{\cong}[swap]{\epsilon_2} {}^J W \arrow{dr}{\epsilon_1}[swap]{\cong} \\
			\protect{|{G\text{-}\mathsf{Zip}}^{\mu}|} \arrow{rr}{\cong} && \protect{[\prescript{}{1}{\mathcal{C}_1^{\sigma(\mu)}}/\mathrm{Ad}_{\sigma} G](k)}
		\end{tikzcd}.
		\]
	\end{customthm}
	The above theorem is the content of Proposition \ref{Prop:ManyIsom3}, Corollary \ref{Cor:Comp0} and \ref{Cor:Comp}, where details and (slightly) more general results are discussed. The proof of Theorem~\ref{MainThm:com} is largely a matter of abstract reasoning once again; however, establishing the homeomorphisms $\epsilon_1$ and $\epsilon_2$ is non-trivial. Our results indicate that they are indeed equivalent. Theorem~\ref{MainThm:com} also shows that ${G\text{-}\mathsf{Zip}}^{\mu}$ and its function field counterpart $[\prescript{}{1}{\mathcal{C}_1^\mu}/\mathrm{Ad}_{\sigma} G]$ are intertwined by partial Frobenii.

    These findings deepen our understanding of zip period maps for Shimura varieties, leading to improvements in our previous work \cite{Yan18} (revisited in \S \ref{S:App}). Moreover, the main results of this paper lay the foundation for \cite{yan25intperdmap}.

	\subsection{An open problem}
	Suppose that the reductive group $G$ is split over $k$, with no constraints on the characteristic of $k$, and we have chosen a Borel pair $(B, T)$ with the maximal torus $T$ split. Then we have the Cartan decomposition of $\mathcal{L}G = \bigsqcup_{\mu \in X_*(T)_{\mathrm{dom}}} \mathcal{C}^\mu$ into locally closed subschemes. The closure of $\mathcal{C}^\mu$, denoted by $\mathcal{C}^{\leq \mu}$, is set-theoretically equal to the union of $\mathcal{C}^{\mu'}$ with $\mu' \leq \mu$. Motivated by the fact that the (closed) affine Schubert variety $\mathrm{Gr}_{\leq \mu}:=\mathcal{C}^{\leq \mu}/\mathcal{K}$ is represented by a normal $k$-scheme, we make the following conjecture regarding the representability of the fpqc $k$-sheaf defined by
	\[ \prescript{}{1}{\mathcal{C}_1^{\leq \mu}} := \mathcal{K}_1 \backslash \mathcal{C}^{\leq \mu} / \mathcal{K}_1. \]
	\begin{conjecture}\label{conjectureIntro}
		The fpqc sheaf $\prescript{}{1}{\mathcal{C}_1^{\leq \mu}}$ is represented by a normal $k$-scheme. 
	\end{conjecture}
	The conjecture holds true for minuscule $\mu$, since in that case we have $\prescript{}{1}{\mathcal{C}_1^\mu} = \prescript{}{1}{\mathcal{C}_1^{\leq \mu}}$. Demonstrating the validity of this conjecture would imply that the larger space $\mathcal{K}_1 \backslash \mathcal{L}G / \mathcal{K}_1$ is an ind-scheme. 
	
   \subsection{Organization of the Paper}	
   Section \ref{S:Preliminary} presents preliminary material, including foundational facts on loop groups and group action data. Section \ref{S:UnipGps} examines the links between unipotent and pro-unipotent groups, establishing key technical lemmas. In Section \ref{S:RepCIImu}, we prove the central representability results in Theorem \ref{MainThm:rep} and Theorem \ref{MainThm:perfect}, and conclude with Conjecture \ref{conjectureIntro}. Section \ref{S:Properties} discusses various properties of the variety, with some results summarized in Theorem \ref{MainThm:rep}. Section \ref{S:ShtukAsZip} compares the moduli stack of 1-1 truncated local $G$-shtukas with $G$-zip stacks, leading to Theorem \ref{MainThm:com}, and in the last subsection we interpret the moduli stack of local $G$-shtukas of fixed type $\mu$ as a pro-zip stack. Finally, Section \ref{S:App} revisits our earlier work on zip period maps \cite{Yan24} \cite{Yan18} and highlights improvements based on the main results of this paper.

\subsection{Notation and conventions}\label{NotatConven}
\begin{enumerate}
\item We write \cancong\ for canonical isomorphisms and $\mathrm{Pr}$ for canonical projections.

\item For a field $k$ of characteristic $p$, we denote by $\sigma: k \to k$ the absolute (i.e.\ $p$-power) Frobenius endomorphism of $k$. If $X$ is a scheme (or any geometric object) over $k$, its base change along $\sigma$ is written $X^\sigma$, and we denote the relative Frobenius of $X$ by $\sigma: X \to X^\sigma$.

\item Given a base scheme $S$, we define stacks over $S$ as in \cite{ChamAlge}, namely over the \'etale site $(\mathrm{Aff}/S)_{\mathrm{et}}$ of affine $S$-schemes.

\item We consider all group actions to be right actions. For a group $G$ and a subgroup $H \subseteq G$, the subgroup $H$ acts on $G$ on the right by left multiplication; specifically, $h \in H$ sends $g \in G$ to $h^{-1} g$. The quotient space is denoted $H \backslash G$.

\item For an algebraic group $G$, we denote the equicharacteristic loop group and associated positive group by $\mathcal{L}G, \mathcal{K} = \mathcal{L}^+G$, and their mixed characteristic counterparts by $LG, K = L^+G$. For example, $\prescript{}{1}{\mathrm{C}_1^\mu}$ indicates the mixed characteristic analogue of $\prescript{}{1}{\mathcal{C}_1^\mu}$.
\end{enumerate}

\subsection{Acknowledgements} 
  This paper was partly inspired by Fabrizio Andretta's question regarding my PhD thesis, and I am grateful for his input. I thank Yigeng Zhao and Westlake University for providing an exceptional environment during my 2019 visit, where much of this work was developed. Special thanks to Sian Nie for insightful group theory discussions, and to Jingren Chi, Ulrich Goertz, Xuhua He, Xu Shen, Liang Xiao, Chao Zhang, and Xinwen Zhu for their helpful remarks. I also thank the anonymous referees for their valuable suggestions, which greatly improved this manuscript.

    \section{Preliminaries on Loop Groups and group action data} \label{S:Preliminary}
   This section collects basic definitions and results on loop groups and on group action data—two largely distinct topics—while also establishing the notation and conventions that will be used throughout the subsequent discussion.

\subsection{Loop Groups}\label{S:loopgp}
Let \( k \) be a field and \( G \) a reductive group over \( k \). In this subsection, we recall the definitions of the loop group \( \mathcal{L} G \) and the positive loop group \( \mathcal{L}^+ G \), along with some of their basic properties. The representability of a subfunctor of \( \mathcal{L} G \) is proven shortly thereafter.

The \emph{loop group} \( \mathcal{L} G \) is the fpqc sheaf of groups whose \( A \)-valued points for a \( k \)-algebra \( A \) are given by \( \mathcal{L} G(A) = G(A((t))) \), where \( A((t)) \) is the ring of formal Laurent series with coefficients in \( A \).

The \emph{affine Grassmannian} \( \mathrm{Gr} = \mathrm{Gr}_G \) of \( G \) is the fpqc sheafification of the presheaf assigning the quotient \( \mathcal{L} G(R)/\mathcal{L}^+ G(R) \) to every \( k \)-algebra \( R \). For each integer $n\geq 1$, we denote \( \mathcal{L}_{n} G \) as the Weil restriction \( \mathrm{Res}_{(k[t]/t^n)/k} G \), which is again a connected reductive group over \( k \). Let \( \mathcal{K}_n \) be the kernel of the reduction modulo \( t^n \) map from \( \mathcal{L}^+ G \) to \( \mathcal{L}_n G \). When necessary, to avoid confusion regarding the group \( G \), we will use \( \mathcal{L}^n G := \mathcal{K}_n \). The canonical embedding \( G \hookrightarrow \mathcal{L}^+ G \) induces a decomposition \( \mathcal{L}^+ G = G \ltimes \mathcal{K}_1 \).

For a \( k \)-scheme \( T \) and an element \( g \in \mathcal{L}^+ G(T) \), we denote by \( \bar{g} \) its image in \( G(T) \) along the canonical projection \( \mathrm{pr} : \mathcal{L}^+ G \xrightarrow{t\mapsto 0} G \).
The next theorem summarizes some standard facts concerning \( \mathcal{L}^+ G \) and \( \mathcal{L} G \). 

\begin{theorem}\label{Thm:factsloopGp} The following holds:              
	\begin{enumerate}
		\item\label{I:plim} The positive loop group \( \mathcal{L}^+ G \) is a projective limit of smooth affine group \( k \)-schemes (the limit taken in the category of fpqc sheaves over \( k \)):
		\[
		\mathcal{L}^+ G = \lim_{n \geq 1} \mathcal{L}_{n} G.
		\]
		\item\label{I:red} The positive loop group is represented by a formally smooth affine group scheme over \( k \) of infinite dimension (unless \( G \) is trivial). In particular, \( \mathcal{L}^+ G \) is reduced.  
		\item\label{I:ind} The loop group \( \mathcal{L} G \) is an ind-affine, ind-finite type ind-scheme, i.e., \( \mathcal{L} G = \lim_{i \geq 1} G^i \) (as fpqc sheaves), where the fpqc sheaf \( G^i \) is represented by an affine scheme of finite type over \( k \), and the transition map \( G^i \hookrightarrow G^{i+1} \) is a closed immersion for each \( i \geq 1 \). 
		\item\label{I:indproj} The affine Grassmannian \( \mathrm{Gr} \) is represented by an ind-projective ind-scheme, i.e., in the category of fpqc sheaves over \( k \):
		\[
		\mathrm{Gr} = \lim_{i \geq 1} \mathrm{Gr}^i,
		\]
		where the fpqc sheaf \( \mathrm{Gr}^i \) is represented by a projective scheme over \( k \), and the transition map \( \mathrm{Gr}^i \hookrightarrow \mathrm{Gr}^{i+1} \) is a closed immersion for each \( i \). 
	\end{enumerate}
\end{theorem}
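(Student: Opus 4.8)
The plan is to obtain (1) and (2) directly from the definitions, and to reduce (3) and (4) to the case $G=\mathrm{GL}_N$ by choosing a closed embedding. For (1), I would use that $A[[t]]=\varprojlim_nA[t]/t^n$ in rings and that $G$ is affine, so that $G(A[[t]])=\varprojlim_nG(A[t]/t^n)=\varprojlim_n\mathcal{L}_nG(A)$ for every $k$-algebra $A$; since limits of fpqc sheaves are computed on sections, this identifies $\mathcal{L}^+G$ with $\varprojlim_n\mathcal{L}_nG$, and each $\mathcal{L}_nG=\mathrm{Res}_{(k[t]/t^n)/k}G$ is a smooth affine group $k$-scheme because $G$ is smooth affine and Weil restriction along the finite free cover $\mathrm{Spec}(k[t]/t^n)\to\mathrm{Spec}\,k$ preserves affineness, smoothness and the group law. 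For (2) I would note that the transition morphism $\mathcal{L}_{n+1}G\to\mathcal{L}_nG$ is a torsor under the vector group $\mathrm{Lie}(G)\otimes_k(t^nk[t]/t^{n+1}k[t])$, hence smooth, surjective and affine; therefore the projective limit is represented by the affine scheme $\mathrm{Spec}$ of the filtered colimit $\varinjlim_n\mathcal{O}(\mathcal{L}_nG)$, and reducedness follows because each $\mathcal{O}(\mathcal{L}_nG)$ is smooth, hence reduced, over $k$ and a filtered colimit of reduced rings is reduced. Infinite-dimensionality is clear from $\dim\mathcal{L}_nG=n\dim G$ together with the faithful flatness of $\mathcal{L}^+G\to\mathcal{L}_nG$. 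For formal smoothness I would verify the infinitesimal lifting criterion directly: given a square-zero extension $A'\twoheadrightarrow A$ and a point $x=(x_n)_n\in\mathcal{L}^+G(A)=\varprojlim_n\mathcal{L}_nG(A)$, lift $x_1$ to some $x_1'\in G(A')$ using smoothness of $G$, and then inductively use that $\mathcal{L}_{n+1}G\to\mathcal{L}_nG$ is smooth to lift $x_{n+1}$ over $A'$ compatibly with the already-chosen $x_n'$; the coherent system $(x_n')_n$ is the required lift.

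For (3) and (4) I would fix a closed embedding $G\hookrightarrow\mathrm{GL}_N$ of algebraic $k$-groups, which reduces matters to $G=\mathrm{GL}_N$ once one knows that $\mathcal{L}G\hookrightarrow\mathcal{L}\mathrm{GL}_N$ is a closed immersion compatible with the ind-structures and that $\mathrm{Gr}_G\hookrightarrow\mathrm{Gr}_{\mathrm{GL}_N}$ is a closed immersion --- the latter because $\mathrm{GL}_N/G$ is quasi-affine, which follows from Matsushima's criterion as $G$ is reductive inside the reductive group $\mathrm{GL}_N$. For $\mathrm{GL}_N$ itself, I would view $\mathcal{L}\mathrm{GL}_N(A)$ as $\mathrm{Aut}_{A((t))}(A((t))^N)$ and stratify by pole order, letting $X_i$ be the subfunctor of those $g$ for which both $t^ig$ and $t^ig^{-1}$ have entries in $A[[t]]$. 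Sending $g$ to the pair $(h,h')=(t^ig,t^ig^{-1})$ identifies $X_i$ with the closed subfunctor of $\mathcal{L}^+\mathrm{Mat}_N\times\mathcal{L}^+\mathrm{Mat}_N$ (where $\mathcal{L}^+\mathrm{Mat}_N$ denotes the manifestly affine functor $A\mapsto\mathrm{Mat}_N(A[[t]])$ of $N\times N$ matrices) cut out by the equation $hh'=t^{2i}I_N$; hence each $X_i$ is represented by an affine scheme, the inclusions $X_i\hookrightarrow X_{i+1}$ (given on coordinates by $(h,h')\mapsto(th,th')$) are closed immersions, and $\mathcal{L}\mathrm{GL}_N=\varinjlim_iX_i$, which is (3). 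For (4) I would identify the fpqc sheafification of $A\mapsto\mathrm{GL}_N(A((t)))/\mathrm{GL}_N(A[[t]])$ with the functor of $A[[t]]$-lattices $\Lambda\subseteq A((t))^N$, stratify by $t^i\Lambda_0\subseteq\Lambda\subseteq t^{-i}\Lambda_0$ with $\Lambda_0=A[[t]]^N$, and observe that each stratum is a closed subscheme of the ordinary Grassmannian of the finite free $A$-module $t^{-i}\Lambda_0/t^i\Lambda_0$, hence projective, the union being ind-projective; transporting along $\mathrm{Gr}_G\hookrightarrow\mathrm{Gr}_{\mathrm{GL}_N}$ yields (4) for general $G$. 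Throughout I would lean on the standard references for these facts (Beauville--Laszlo, Faltings, Pappas--Rapoport, Beilinson--Drinfeld, Zhu).

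I expect the main obstacle to be the two genuinely non-formal inputs. First, for (3), the representability of the pole-bounded pieces $X_i$ and the check that $X_i\hookrightarrow X_{i+1}$ is a closed immersion require care with the explicit matrix equations, and one should note that $\mathcal{L}^+\mathrm{Mat}_N$ is itself only a projective limit of finite-type schemes, so the $X_i$ are affine but (for $N\geq 1$) not of finite type. Second, for (4), the identification of the sheaf-theoretic quotient $\mathcal{L}G/\mathcal{L}^+G$ with a functor of lattices (or of $G$-lattices) --- a Beauville--Laszlo-type gluing/descent --- is where essentially all the real content resides. By comparison, the passage from a general reductive $G$ back to $\mathrm{GL}_N$ is formal once the closed-immersion statements are in hand, resting only on the affineness of $\mathrm{GL}_N/G$ for reductive $G$.
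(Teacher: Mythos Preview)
Your approach matches the paper's: for (1)--(2) you supply explicit arguments where the paper simply cites \cite{PseudoReductiveGroups} and \cite{TwistedLoopGroupsRapoport&Pappas}, and for (3)--(4) both you and the paper reduce to $\mathrm{GL}_N$ via a closed embedding and then use the pole-order stratification $\mathrm{GL}_{N}^{i}(R)=\{M:\ M,\,M^{-1}\in t^{-i}\mathrm{M}_N(R[[t]])\}$, with the paper citing \cite{Beauville1994} for the $\mathrm{GL}_N$ case and \cite{TwistedLoopGroupsRapoport&Pappas} for the passage to general $G$. Your closing observation that the $X_i$ are affine but not of finite type is correct and in fact catches an imprecision in the statement of (3): since the paper's own $G^0$ equals $\mathcal{L}^+G$, which is infinite-dimensional, the pieces $G^i$ are affine but not of finite type, so ``ind-finite type'' in (3) should be read as ``ind-affine'' only---it is the $\mathrm{Gr}^i$ in (4) that are genuinely finite type (indeed projective).
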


\begin{proof}
	For statement \eqref{I:plim}, the nontrivial part is to show that the Weil restriction \( \mathrm{Res}_{(k[t]/t^n)/k} G \) is represented by a smooth, connected affine group scheme of finite type over \( k \) -- a standard fact. Interested readers may refer to \cite{PseudoReductiveGroups}[A.5].

	In regards to statements \eqref{I:red}, \eqref{I:ind}, and \eqref{I:indproj}, we refer to \cite{TwistedLoopGroupsRapoport&Pappas} for a proof. However, let us hint for later purposes how to obtain \( G^i \) and \( \mathrm{Gr}^i \) in the expressions \( \mathcal{L} G = \lim_{i \geq 1} G^i \) and \( \mathrm{Gr} = \lim_{i \geq 1} \mathrm{Gr}^i \). The basic idea is to embed \( G \) into \( \mathrm{GL}_{n, k} \), reducing the problem to showing that \( \mathcal{L}^+ \mathrm{GL}_{n, k} \) is represented by an affine group scheme and that \( \mathcal{L}^+ G \hookrightarrow \mathcal{L}^+ \mathrm{GL}_{n, k} \) is a closed embedding. The case \( G = \mathrm{GL}_{n, k} \) has been essentially dealt with in the original work \cite{Beauville1994}, where \( k \) was taken to be \( \mathbb{C} \). For \( i \geq 0 \), let \( \mathrm{GL}_{n, k}^{i} \) denote the subfunctor of \( \mathcal{L} \mathrm{GL}_{n, k} \) defined by
	\[
	\mathrm{GL}_{n, k}^{i}(R) = \left\{ M \in \mathrm{GL}_n(R((t))) : M, M^{-1} \in t^{-i} \mathrm{M}_{n}(R[[t]]) \right\}.
	\]
	One can show that each \( \mathrm{GL}_{n, k}^i \) is represented by an affine scheme (which is not a group scheme unless \( i = 0 \)), and the transition morphisms are closed immersions. Hence, we have \( \mathcal{L} \mathrm{GL}_{n, k} = \bigcup_{i=0}^{\infty} \mathrm{GL}_{n, k}^{i} \). For the general case, we set
	\begin{equation}\label{Embedding of loop group}
		G^i = \mathcal{L} G \cap \mathrm{GL}_{n, k}^{i}, \quad \mathrm{Gr}^i = G^i / \mathcal{L}^+ G.   
	\end{equation}
\end{proof}

\begin{remark}
	From now on, we define \( G^i \) and \( \mathrm{Gr}^i \) as in \eqref{Embedding of loop group}, by choosing an embedding \( G \hookrightarrow \mathrm{GL}_{n, k} \) whenever the expression \( \mathcal{L} G = \lim_{i \geq 1} G^i \) is needed in later applications. The obtained \( G^i \) exhibit useful properties:
	\begin{enumerate}
		\item \( G^0 = \mathcal{L}^+ G \),
		\item Each \( G^i \) is stable under the action of \( \mathcal{L}^+ G \) on \( \mathcal{L} G \) via left or right multiplication, i.e.,
		\[
		\mathcal{L}^+ G \cdot G^i \cdot \mathcal{L}^+ G = G^i.
		\]
		\item For each \( k \)-algebra \( R \), \( \mathcal{L} G(R) = \bigcup_{i=0}^{\infty} G^i(R) \).
	\end{enumerate}
\end{remark}

For an element \( x \in \mathcal{L} G(k) \), let \( \mathcal{C}^x \subseteq \mathcal{L} G \) be the fpqc sheafification of the presheaf assigning the subset \( \mathcal{K}(R)x\mathcal{K}(R) \) of \( \mathcal{L} G(R) \) to any \( k \)-algebra \( R \). The next lemma addresses the representability of \( \mathcal{C}^x \), concerning the representability of an orbit of a group action (\( \mathcal{L}^+ G \times \mathcal{L}^+ G \) acts on \( \mathcal{L} G \) by simultaneous left and right multiplication), where both the group scheme and the scheme being acted upon are infinite-dimensional. The validity of this lemma is known to experts, but we sketch a proof here to compensate for the lack of a precise reference.

\begin{lemma}
	The sheaf \( \mathcal{C}^x \) is represented by a formally smooth \( k \)-scheme. 
\end{lemma}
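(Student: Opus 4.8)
The plan is to realise $\mathcal{C}^{x}$ as the preimage, under the quotient map $\pi\colon\mathcal{L}G\to\mathrm{Gr}$, of the $\mathcal{L}^{+}G$-orbit of the point of $\mathrm{Gr}$ determined by $x$, and then to exploit that over that orbit $\pi$ restricts to a torsor under $\mathcal{L}^{+}G$ whose base is a genuine scheme. Write $\mathcal{K}=\mathcal{L}^{+}G$, let $[x]\in\mathrm{Gr}$ denote the image of $x$, and let $\mathcal{O}\subseteq\mathrm{Gr}$ be the fpqc sheaf image of the orbit map $\mathcal{K}\to\mathrm{Gr},\ g\mapsto g\cdot[x]$, so that $\mathcal{O}=\mathcal{K}/\mathrm{Stab}_{\mathcal{K}}([x])$ as fpqc sheaves. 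Since $\pi$ is a $\mathcal{K}$-torsor, in particular an fpqc-surjection of sheaves, one obtains the identification of subsheaves $\mathcal{C}^{x}=\pi^{-1}(\mathcal{O})$ of $\mathcal{L}G$: indeed an $R$-point $y$ of $\mathcal{L}G$ with $[y]\in\mathcal{O}(R)$ is, fpqc-locally on $\operatorname{Spec}R$, of the shape $gxh$ with $g,h\in\mathcal{K}$, and conversely any such product visibly lies over $\mathcal{O}$.

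First I would check that $\mathcal{O}$ is a smooth, quasi-projective, locally closed subscheme of $\mathrm{Gr}$. By the remark following Theorem~\ref{Thm:factsloopGp} we may fix $i$ with $x\in G^{i}(k)$, so that $[x]$ lies in the projective scheme $\mathrm{Gr}^{i}$, which is $\mathcal{K}$-stable. An elementary lattice computation in the $\mathrm{GL}_{n,k}$-model shows that the congruence subgroup $\mathcal{K}_{N}$ acts trivially on $\mathrm{Gr}^{i}$ once $N$ is large in terms of $i$; hence the $\mathcal{K}$-action on $\mathrm{Gr}^{i}$ factors through the smooth connected algebraic group $\mathcal{L}_{N}G$, and $\mathrm{Stab}_{\mathcal{K}}([x])$ is the preimage of a closed subgroup scheme $\overline{H}\subseteq\mathcal{L}_{N}G$. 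The orbit of a rational point under a smooth affine algebraic group acting on a finite-type $k$-scheme is locally closed and, with its reduced structure, is canonically isomorphic to the quotient $\mathcal{L}_{N}G/\overline{H}$, which is a smooth quasi-projective $k$-scheme; so $\mathcal{O}$ is of the asserted type and $\mathcal{O}=\mathcal{K}/\mathrm{Stab}_{\mathcal{K}}([x])\cong\mathcal{L}_{N}G/\overline{H}$.

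Finally, $\pi\colon\mathcal{L}G\to\mathrm{Gr}$ is an fpqc-locally trivial $\mathcal{K}$-torsor, so its base change $\mathcal{C}^{x}=\pi^{-1}(\mathcal{O})\to\mathcal{O}$ is a $\mathcal{K}$-torsor over the scheme $\mathcal{O}$ (it is even trivialised by pullback along $\mathcal{K}\to\mathcal{O}$, the assignment $g\mapsto(gx,g)$ furnishing a section of $\mathcal{C}^{x}\times_{\mathcal{O}}\mathcal{K}\to\mathcal{K}$). Writing $\mathcal{K}=\varprojlim_{n}\mathcal{L}_{n}G$ and pushing out along $\mathcal{K}\twoheadrightarrow\mathcal{L}_{n}G$, one presents $\mathcal{C}^{x}\cong\varprojlim_{n}P_{n}$ as an inverse limit, where $P_{n}\to\mathcal{O}$ is a torsor under the smooth affine group $\mathcal{L}_{n}G$ — hence a smooth scheme affine over $\mathcal{O}$ — and the transition maps $P_{n}\to P_{n-1}$ are torsors under vector groups, in particular smooth and affine. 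Exactly as in the proof of Theorem~\ref{Thm:factsloopGp}\eqref{I:red}, it follows that $\mathcal{C}^{x}$ is representable by a scheme, affine over $\mathcal{O}$ and formally smooth over it; composing with the smooth structure morphism $\mathcal{O}\to\operatorname{Spec}k$ shows that $\mathcal{C}^{x}$ is a formally smooth $k$-scheme. The steps demanding genuine (if routine) care are the sheaf-theoretic identification $\mathcal{C}^{x}=\pi^{-1}(\mathcal{O})$ and the reduction of the $\mathcal{K}$-action on the bounded piece $\mathrm{Gr}^{i}$ to a finite level; once those are settled, the conclusion is formal.
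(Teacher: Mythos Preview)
Your proof is correct and follows essentially the same strategy as the paper: both project to the affine Grassmannian, use that the $\mathcal{L}^{+}G$-action on a bounded piece $\mathrm{Gr}^{i}$ factors through a truncation $\mathcal{L}_{N}G$ (the paper takes $N=2i$) to identify the orbit of $[x]$ as a smooth finite-type scheme, and then realise $\mathcal{C}^{x}$ as the preimage of that orbit under the $\mathcal{L}^{+}G$-torsor $\pi$. Your treatment of representability and formal smoothness via the tower of $\mathcal{L}_{n}G$-torsors $P_{n}\to\mathcal{O}$ is in fact more explicit than the paper's sketch, which simply names the preimage $O_{x}\subseteq G^{i}$ and leaves the formal smoothness implicit.
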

\begin{proof}
	Take \( G^i \) as in \eqref{Embedding of loop group} so that \( \mathcal{L} G = \lim_{i \geq 1} G^i \). Then, we have \( x \in G^i(k) \) for some \( i \). Let \( \bar{x} \in \mathrm{Gr}^i(k) \) be the image of \( x \) under the canonical projection \( \mathrm{pr}: G^i \twoheadrightarrow \mathrm{Gr}^i \), and consider the \( \mathcal{L}^+ G \) action on \( \mathcal{C}^i \) (resp. \( \mathrm{Gr}^i \)) via left multiplication. From the definition of \( G^i \), it can be verified that, reducing to the case \( G = \mathrm{GL}_{n,k} \), the \( \mathcal{L}^+ G \)-action on \( \mathrm{Gr}^i \) factors through the canonical projection \( \mathcal{L}^+ G \twoheadrightarrow \mathcal{L}_{2i} G \).

	Let \( \mathrm{orb}_{\bar{x}}: \mathcal{L}^+ G \twoheadrightarrow \mathcal{L}_{2i} G \to \mathrm{Gr}^i \) be the orbit map of \( \bar{x} \). Notice that \( \mathrm{Gr}^i \) is a projective scheme over \( k \) and \( \mathcal{L}_{2i} G \) is a smooth connected linear algebraic group over \( k \). From the theory of smooth linear algebraic groups acting on algebraic schemes (see \cite[Proposition~7.17]{MilneAlgebraicGroups}), we know that \( \mathrm{orb}_{\bar{x}}(\mathcal{L}^+ G) \) is a locally closed subset of \( \mathrm{Gr}^i \), and the \( \mathcal{L}_{2i} G \)-orbit \( O_{\bar{x}} \subseteq \mathrm{Gr}^i \) (the reduced subscheme of \( \mathrm{Gr}^i \) with \( \mathrm{orb}_{\bar{x}}(\mathcal{L}^+ G) \) as its underlying topological space) is a smooth connected separated algebraic scheme over \( k \). Let \( O_x \subseteq G^i \) be the pullback of \( O_{\bar{x}} \) along \( \mathrm{pr} \). The orbit map \( \mathrm{orb}_{x}: \mathcal{L}^+ G \to G^i \) of \( x \) factors through \( O_x \), thereby ensuring \( O_x \supseteq \mathcal{C}^x \) as fpqc sheaves. Finally, it is evident that fpqc locally, each local section of \( O_x \) arises from \( \mathcal{C}^x \). 
\end{proof}

\subsection{Frobenii on the Loop Group}\label{S:TwoFrob}
Let \( k \), \( G \), and \( \mu \) be as in the previous subsection, with the additional assumption that \( k \) is of characteristic \( p \). In this section, we examine the Frobenii of the loop group \( \mathcal{L}G \). Although our discussion extends beyond the immediate needs of this paper, it is intended for future reference.

Let \( \sigma: k \to k \) denote the absolute Frobenius of \( k \), and by abuse of notation, we write \( \sigma: \mathrm{Spec} k \to \mathrm{Spec} k \) for the corresponding scheme map. Let \( G^{\sigma} \) be the pullback of \( G \) along \( \sigma \). Suppose \( G = \mathrm{Spec} A \) for some \( k \)-algebra \( A \). For any \( k \)-algebra \( R \), we have \( \mathcal{L} G(R) = \mathrm{Hom}_{k}(A, R((t))) \), where the \( k \)-algebra structure on \( R((t)) \) is given by \( k \to R \hookrightarrow R((t)) \). For \( G^{\sigma} \), we have \( \mathcal{L} G^{\sigma}(R) = \mathrm{Hom}_{k}(A, {}^{\sigma}R((t))) \), where \( {}^{\sigma}R((t)) \) denotes \( R((t)) \) with a \( k \)-algebra structure defined via \( k \xrightarrow{\sigma} k \to R \hookrightarrow R((t)) \).

For each \( R \), let \( \sigma: R((t)) \to R((t)) \) be the ring endomorphism that applies the absolute Frobenius to \( R \) while fixing \( t \). The map \( \mathcal{L} G(A) \to \mathcal{L} G^{\sigma}(A) \), sending \( f \in \mathrm{Hom}_{k}(A, R((t))) \) to \( \sigma \circ f \in \mathrm{Hom}_{k}(A, {}^{\sigma}R((t))) \), is functorial in \( R \) and hence defines a \( k \)-homomorphism of loop groups:
\begin{equation}\label{Sigofloopgp}
	\sigma = \sigma_{\mathcal{L} G}: \mathcal{L} G \to \mathcal{L} G^{\sigma}.
\end{equation}

Alternatively, we can express \( \sigma: \mathcal{L} G \to \mathcal{L} G^{\sigma} \) as follows. Write \( \mathcal{L} G = \lim_{i \geq 1} G^i \) by choosing an embedding \( G \hookrightarrow \mathrm{GL}_{n, k} \) as in (\ref{Embedding of loop group}). For each \( k \)-algebra \( R \), the equality
\[
\mathcal{L} G^{\sigma}(R) = \mathcal{L} G({}^{\sigma} R) = \bigcup_{i \geq 0} G^i({}^{\sigma} R) = \bigcup_{i \geq 0} G^{i, \sigma}(R)
\]
implies that \( \mathcal{L} G^{\sigma} = \lim_{i \geq 0} G^{i, \sigma} \). The compatible system of relative Frobenii \( \sigma: G^i \to G^{i, \sigma} \) defines a \( k \)-homomorphism \( \mathcal{L} G \to \mathcal{L} G^{\sigma} \) that coincides with \( \sigma_{\mathcal{L} G} \) in (\ref{Sigofloopgp}).

Furthermore, the formation of \( \mathcal{L} G \) is clearly functorial in \( G \), thus the relative Frobenius \( \sigma: G \to G^{\sigma} \) gives rise to a \( k \)-homomorphism of loop groups:
\[
\varphi = \varphi_{\mathcal{L} G} = \mathcal{L}(\sigma): \mathcal{L} G \to \mathcal{L} G^{\sigma},
\]
 which sends \( f \in \mathrm{Hom}_{k}(A, R((t))) \) to \( \varphi \circ f \in \mathrm{Hom}_{k}(A, {}^{\sigma}R((t))) \). Here, \( \varphi: R((t)) \to R((t)) \) denotes the absolute Frobenius on \( R((t)) \) (thus sending \( t \) to \( t^p \)).

Both homomorphisms of group functors \( \sigma, \varphi: \mathcal{L} G \to \mathcal{L} G \) stabilize \( \mathcal{L}^+ G \) and thus induce homomorphisms of group \( k \)-schemes:
\[
\sigma, \varphi: \mathcal{L}^+ G \to \mathcal{L}^+ G^{\sigma}.
\]

These can also be defined from finite levels. For any \( n \geq 1 \) and any \( k \)-algebra \( R \), the ring endomorphisms \( \sigma: R[[t]] \to R[[t]] \) and \( \varphi: R[[t]] \to R[[t]] \) induce ring endomorphisms \( \sigma: R[t]/t^{n} \to R[t]/t^{n} \) and ring maps \( R[t]/t^{n} \to R[t]/t^{np} \). The maps \( \mathcal{L}_{n} G(A) \to \mathcal{L}_{n} G(A) \) (resp. \( \mathcal{L}_{n} G(A) \to \mathcal{L}_{np} G(A) \)) sending \( f \in \mathrm{Hom}_{k}(A, R[t]/t^{n}) \) to \( \sigma \circ f \in \mathrm{Hom}_{k}(A, {}^{\sigma}(R/t^{n})) \) (resp. to \( \varphi \circ f \in \mathrm{Hom}_{k}(A, {}^{\sigma}(R/t^{np})) \)) are functorial in \( R \), and thus induce homomorphisms of \( k \)-group schemes \( \sigma: \mathcal{L}_{n} G \to \mathcal{L}_{n} G \) (resp. \( \varphi: \mathcal{L}_{n} G \to \mathcal{L}_{np} G \)), fitting into the following commutative diagrams, where vertical arrows represent natural reduction maps:
\begin{align}
	\begin{array}{cc}
		\xymatrix{\mathcal{L}^+ G \ar[r]^{\sigma }\ar[d] & \mathcal{L}^+ G^{\sigma}\ar[d]\\
			\mathcal{L}_{n} G\ar[r]^{\sigma}& \mathcal{L}_{n} G^{\sigma},} \quad \text{resp.} \quad
		\xymatrix{\mathcal{L}^+ G \ar[r]^{\varphi }\ar[d] & \mathcal{L}^+ G^{\sigma}\ar[d]\\
			\mathcal{L}_{n} G\ar[r]^{\varphi}& \mathcal{L}_{np} G^{\sigma}.}
	\end{array}
\end{align}
In particular, we have
\[
\sigma(\mathcal{L}^{n} G) \subseteq \mathcal{L}^{n} G^{\sigma}, \quad \varphi(\mathcal{L}^{n} G) \subseteq \mathcal{L}^{np} G^{\sigma}. 
\]

\begin{example}
	Let \( G = \mathbb{G}_{m,k} \) and \( R \) a \( k \)-algebra. Then, for \( f = \sum a_i t^i \in \mathcal{L} \mathbb{G}_m(R) \), we have:
	\[
	\sigma(f) = \sum a_i^p t^i, \quad \varphi(f) = \sum a_i^p t^{pi}.
	\]
\end{example}

From now on, we assume that our reductive group \( G \) is defined over \( \mathbb{F}_p \subseteq k \). Thus, we have the canonical isomorphism \( G \cong G^{\sigma} \). Consequently, from the preceding discussion, we have two Frobenius endomorphisms \( \sigma, \varphi: \mathcal{L} G \longrightarrow \mathcal{L} G \) of \( \mathcal{L} G \), which preserve \( \mathcal{L}^+ G \), and both are homomorphisms of \( k \)-group functors. Let \( \mu: \mathbb{G}_{m,k} \to G \) be a cocharacter of \( G \). Define
\[
\sigma(\mu) = \mu^{\sigma}: \mathbb{G}_{m,k} \to G, 
\]
to be the base change along \( \sigma: k \to k \) of \( \mu \) and define 
\(
\varphi(\mu) = \mu^{\varphi}=\sigma \circ \mu, 
\)
with $\sigma: G\to G$ the relative Frobenius of $G$. One verifies that
\begin{equation} \label{Eq:TwoFrobeniRelation}
  \varphi(\mu) = p \sigma(\mu).    
\end{equation}

The following lemma follows from direct verification (e.g., by choosing an embedding \( G \hookrightarrow \mathrm{GL}_{n, k} \)). 

\begin{lemma}\label{Lem:FrobvsMu}
	Let \( G \) and \( \mu \) be as above. Assume further that \( G \) is defined over \( \mathbb{F}_p \). Then 
	\begin{enumerate}
		\item We have \( \mu^\sigma(t) = \sigma(\mu(t)) \).
		\item The diagram below is commutative: 
		\begin{equation*}
			\xymatrix{& \mathcal{L} G \ar[dr]^{\varphi_{\mathcal{L} G}}&\\
				\mathcal{L} \mathbb{G}_{m,k} \ar[rr]^{\mathcal{L}(\mu^{\varphi})}\ar[ur]^{\mathcal{L}(\mu)}\ar[dr]_{\varphi_{\mathcal{L} \mathbb{G}_{m,k}}} && \mathcal{L} G.\\
				& \mathcal{L} \mathbb{G}_{m,k} \ar[ur]_{\mathcal{L}(\mu^{\sigma})} &}
		\end{equation*}
	\end{enumerate}
    In particular,  \( \mu^{\varphi}(t) = \varphi(\mu(t)) \) holds.
\end{lemma}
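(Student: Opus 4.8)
The plan is to deduce both parts from two standard facts: (a) $G\mapsto\mathcal{L}G$, $\mu\mapsto\mathcal{L}(\mu)$ is a functor which commutes with base change along $\sigma\colon k\to k$, so $\mathcal{L}(g\circ f)=\mathcal{L}(g)\circ\mathcal{L}(f)$ and $\mathcal{L}(\mu)^{\sigma}=\mathcal{L}(\mu^{\sigma})$; and (b) the relative Frobenius is functorial, i.e.\ for every $k$-morphism $f\colon X\to Y$ one has $f^{\sigma}\circ\sigma_{X}=\sigma_{Y}\circ f$. The hypothesis that $G$ is defined over $\mathbb{F}_{p}$ serves only to identify $\mathcal{L}G^{\sigma}$ with $\mathcal{L}G$. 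The work is to match the homomorphisms $\sigma_{\mathcal{L}G}$ and $\varphi_{\mathcal{L}G}$ of \S\ref{S:TwoFrob} with the correct pieces of this formalism, and I expect the bookkeeping between these two Frobenii to be the only place one can go wrong.

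For (1): by the level description in \S\ref{S:TwoFrob}, the homomorphism $\sigma_{\mathcal{L}G}$ of \eqref{Sigofloopgp} restricts on each finite-type piece $G^{i}$ to the relative Frobenius $\sigma\colon G^{i}\to G^{i,\sigma}$, and similarly for $\mathbb{G}_{m,k}$; since $\mathcal{L}(\mu)$ carries each level $\mathbb{G}_{m}^{j}$ into some $G^{i}$ as a morphism of $k$-schemes, (b) applied level by level gives $\sigma_{\mathcal{L}G}\circ\mathcal{L}(\mu)=\mathcal{L}(\mu)^{\sigma}\circ\sigma_{\mathcal{L}\mathbb{G}_{m,k}}$, which by (a) equals $\mathcal{L}(\mu^{\sigma})\circ\sigma_{\mathcal{L}\mathbb{G}_{m,k}}$. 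Evaluating this identity at $t\in\mathcal{L}\mathbb{G}_{m,k}(k)=k((t))^{\times}$, and using that $\sigma_{\mathcal{L}\mathbb{G}_{m,k}}$ fixes $t$ (it acts by $\sum a_{i}t^{i}\mapsto\sum a_{i}^{p}t^{i}$), one obtains $\sigma(\mu(t))=\mathcal{L}(\mu^{\sigma})(t)=\mu^{\sigma}(t)$, which is (1).

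For (2): recall from \S\ref{S:TwoFrob} that $\varphi_{\mathcal{L}G}=\mathcal{L}(\sigma_{G})$ and $\varphi_{\mathcal{L}\mathbb{G}_{m,k}}=\mathcal{L}(\sigma_{\mathbb{G}_{m,k}})$, where $\sigma_{G}$ and $\sigma_{\mathbb{G}_{m,k}}$ are the scheme-theoretic relative Frobenii, and that $\mu^{\varphi}$ is defined as $\sigma_{G}\circ\mu$. Then the upper triangle of the diagram is pure functoriality: $\varphi_{\mathcal{L}G}\circ\mathcal{L}(\mu)=\mathcal{L}(\sigma_{G})\circ\mathcal{L}(\mu)=\mathcal{L}(\sigma_{G}\circ\mu)=\mathcal{L}(\mu^{\varphi})$, and evaluating at $t$ gives the displayed ``in particular'' claim $\varphi(\mu(t))=\mu^{\varphi}(t)$. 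For the lower triangle, apply (b) to the $k$-morphism $\mu\colon\mathbb{G}_{m,k}\to G$ to get $\sigma_{G}\circ\mu=\mu^{\sigma}\circ\sigma_{\mathbb{G}_{m,k}}$, i.e.\ $\mu^{\varphi}=\mu^{\sigma}\circ\sigma_{\mathbb{G}_{m,k}}$, and apply $\mathcal{L}(-)$ to conclude $\mathcal{L}(\mu^{\varphi})=\mathcal{L}(\mu^{\sigma})\circ\varphi_{\mathcal{L}\mathbb{G}_{m,k}}$. Thus all three composites $\mathcal{L}\mathbb{G}_{m,k}\to\mathcal{L}G$ in the diagram agree.

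The anticipated difficulty is precision, not depth: one must keep firmly in mind that $\sigma_{\mathcal{L}G}$ leaves the uniformizer $t$ untouched whereas $\varphi_{\mathcal{L}G}$ sends $t\mapsto t^{p}$, so that on the specific element $\mu(t)$ the two Frobenii differ exactly by the relation $\varphi(\mu)=p\,\sigma(\mu)$ recorded just above --- indeed $\varphi_{\mathcal{L}\mathbb{G}_{m,k}}(t)=t^{p}$, and feeding this into the lower triangle reproduces $\mu^{\varphi}(t)=\mu^{\sigma}(t)^{p}$. Should one prefer a purely hands-on treatment, the same identities drop out by choosing an $\mathbb{F}_{p}$-rational faithful representation $G\hookrightarrow\mathrm{GL}_{n,\mathbb{F}_{p}}$ and computing directly with the explicit formulas \eqref{Explicit Formula of Two Frobinii}.
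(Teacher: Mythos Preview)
Your argument is correct. The paper itself offers no proof beyond the one-line remark that the lemma ``follows from direct verification (e.g., by choosing an embedding $G\hookrightarrow \mathrm{GL}_{n,k}$)'', so your functoriality approach---using that $\sigma_{\mathcal{L}G}$ is the ind-limit of relative Frobenii, that relative Frobenius is natural in $k$-morphisms, and that $\varphi_{\mathcal{L}G}=\mathcal{L}(\sigma_G)$---is a clean conceptual spelling-out of what the paper leaves implicit, and your closing remark about the $\mathrm{GL}_n$-embedding recovers exactly the paper's suggested route.

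One small comment: in your proof of (1) you in fact establish the stronger identity $\sigma_{\mathcal{L}G}\circ\mathcal{L}(\mu)=\mathcal{L}(\mu^{\sigma})\circ\sigma_{\mathcal{L}\mathbb{G}_{m,k}}$ before specializing to $t$. It is worth noting, as the paper does in the remark immediately following the lemma, that this is \emph{not} the same as $\mathcal{L}(\mu^{\sigma})=\sigma_{\mathcal{L}G}\circ\mathcal{L}(\mu)$; the latter is false in general, and the equality $\mu^{\sigma}(t)=\sigma(\mu(t))$ holds only because $\sigma_{\mathcal{L}\mathbb{G}_{m,k}}$ happens to fix the particular element $t$. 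Your write-up already handles this correctly, but flagging the distinction explicitly would preempt the confusion the paper's remark is designed to address.
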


\begin{remark}
	The upper triangle in the diagram of Lemma \ref{Lem:FrobvsMu} indicates that \( \mathcal{L}(\mu^{\varphi}) = \varphi_{\mathcal{L} G} \circ \mathcal{L}(\mu) \). In contrast, for \( \mu^\sigma \), the analogous relation \( \mathcal{L}(\mu^\sigma) = \sigma_{\mathcal{L} G} \circ \mathcal{L}(\mu) \) does not hold. However, we still have \( \mu^{\sigma}(t) = \sigma(\mu(t)) \) because \( t \), as a special power series in \( k((t)) \), has coefficients in \( \mathbb{F}_p \) which are fixed by \( \sigma: k \to k \).
\end{remark}
	
\subsection{Group action data}\label{S:GpActData}
	Let $S$ be a base scheme. A \emph{group action datum} over $S$ is a triple $(X,G,\xi)$, where $X$ is an $S$-scheme, $G$ is a group $S$-scheme, and $\xi\colon X\times_SG\to X$ is a right action. Often, we simply write $(X,G)$ when the action is understood. This datum defines the quotient stack $[X/G]$, which assigns to any affine $S$-scheme $T$ the groupoid of pairs $(P,\alpha)$—with $P$ a $G$-torsor over $T$ and $\alpha\colon P\to X$ a $G$-equivariant map—or, equivalently, is the stackification of the prestack $[X/G]^{\mathrm{pre}}$ that sends $T$ to the groupoid with objects $X(T)$ and morphisms $\{g\in G(T): x\cdot g=y\}$.
	
	For another datum $(Y,H,\eta)$, a \emph{morphism of group action data} $(f,\phi):(X,G)\to(Y,H)$ consists of an $S$-morphism $f\colon X\to Y$ and an $S$-group homomorphism $\phi\colon G\to H$ compatible with the actions. This induces a morphism of prestacks $[f,\phi]^{\mathrm{pre}}\colon [X/G]^{\mathrm{pre}}\to[Y/H]^{\mathrm{pre}}$, whose stackification $[f,\phi]\colon [X/G]\to[Y/H]$ (denoted $[f]$) sends $x\in X(T)$ to $f(x)\in Y(T)$ and $g\in\operatorname{Isom}(x,y)$ to $\phi(g)\in\operatorname{Isom}(f(x),f(y))$.
	
    Not every morphism of $S$-stacks $[X/G]\to[Y/H]$ comes from a morphism above. For example, if $f\colon X\to Y$ is an $S$-morphism with $G$ and $H$ acting on $X$ and $Y$, respectively, and there exists a map $\alpha\colon X\times_SG\to H$ satisfying $f(x\cdot g)=f(x)\cdot\alpha(x,g)$ and $\alpha(x,gg')=\alpha(x,g)\,\alpha(x\cdot g,g')$ for all $x\in X$ and $g,g'\in G$, then $f$ induces a morphism $[X/G]\to[Y/H]$ of $S$-stacks.

    Let $(f_1, \phi_1), (f_2, \phi_2): (X, G)\to (Y, H)$ be two morphisms of group action data. The induced morphisms of $S$-stacks $[f_1], [f_2]: [X/G]\to [Y/H]$ are 2-isomorphic, if there exists an $S$-morphism $\beta: X\to H$, such that 
    \(\phi_1(g)\beta (x\cdot g)= \beta(g)\phi_2(g),\)
    for all $g\in G$ and all $x\in X$. 

    The discussion above still makes sense if one replace $X$ by an sheaf on $(\mathrm{Aff}/S)_{\mathrm{et}}$).

	\section{Unipotent Groups versus Pro-Unipotent Groups} \label{S:UnipGps}
Let \( k, G, \mu \) be as in the previous section, without restrictions on the characteristic of \( k \). This section establishes key lemmas forming the technical foundation of this work. These lemmas describe relations between unipotent subgroups of \( G \), defined by $\mu$, and the unipotent subgroup \( \mathcal{K}_1 \) of its positive loop group \( \mathcal{L}^+ G \).

 The pair \( (G, \mu) \) determines the associated algebraic subgroups: the parabolic subgroup \( P_+ = P_{\mu} \) and its unipotent radical \( U_+ \). As group functors they are given by associating a \( k \)-algebra \( A \) with the groups (see \cite[Theorem 13.33]{MilneAlgebraicGroups}),
\begin{align*}
	P_+(A) &= \{ g \in G(A) \mid \lim_{t\to 0} \mu(t)g\mu(t)^{-1} \text{ exists} \},\\
	U_+(A) &= \{ g \in P(A) \mid \lim_{t\to 0} \mu(t)g\mu(t) = 1 \}.
\end{align*}
Similarly, we define the opposite parabolic group \( P_- = P_{\mu^{-1}} \) and its unipotent radical \( U_- = U_{\mu^{-1}} \). Let \( M = \mathrm{Cent}_{G}(\mu) \) be the centralizer in \( G \) of \( \mu \). It is a reductive subgroup of \( G \), satisfying \( M = P_+ \cap P_- \). From the definitions of \( U_{\pm} \subseteq P_{\pm} \), we deduce that if \( G \) is embedded into another reductive group \( G' \) over \( k \), then we have  
\begin{equation}\label{Eq:cutParab}
	P_+ = P'_+ \cap G, \quad P_- = P'_- \cap G, \quad  M = M' \cap G,
\end{equation}
where \( U'_{\pm} \subseteq P'_{\pm} \) are the counterparts of \( U_{\pm} \subseteq P_{\pm} \) for the cocharacter induced from \( \mu \) in \( G' \). 

Given a \( k \)-algebra \( A \) and \( p_+ \in P_+(A) \), using the decomposition \( P_+ = U_+ \rtimes M \), we can express \( p_+ = u_+ m \) with \( u_+ \in U_+(A) \) and \( m \in M(A) \). Here, \( u_+ \) is called the \emph{unipotent radical component} of \( p_+ \) and \( m \) the \emph{Levi component} of \( p_+ \). As in \S \ref{S:Preliminary}, for an element \( g \in \mathcal{L}^+ G(A) \), we denote by \( \bar{g} \in G(A) \) the image of \( g \) under the canonical projection \( \mathrm{pr}: \mathcal{L}^+ G(A) = G(A[[t]]) \xrightarrow{t\mapsto 0} G(A) \).

The following lemma defines the basic intertwining relations between the unipotent subgroups \( U_{\pm} \) of \( G \) and the pro-unipotent subgroup \( \mathcal{K}_1 \) of \( \mathcal{L}^+ G \). This lemma will be referenced multiple times throughout this work.

\begin{lemma}\label{Lem:IntegLem}
	The following inclusion relations between group subfunctors of \( \mathcal{L} G \) hold:
	\begin{align*}
		\begin{array}{ccc}
			\mu(t) \mathcal{L}^{+} P_+ \mu(t)^{-1} \subseteq \mathcal{L}^{+} G, & \mu(t) \mathcal{L}^{+} U_+ \mu(t)^{-1} \subseteq \mathcal{L}^{1} U_+ \subseteq \mathcal{K}_1,\\
			\mu(t)^{-1} \mathcal{L}^{+} P_- \mu(t) \subseteq \mathcal{L}^{+} G, & \mu(t)^{-1} \mathcal{L}^{+} U_- \mu(t) \subseteq \mathcal{L}^{1} U_- \subseteq \mathcal{K}_1.
		\end{array}
	\end{align*}
\end{lemma}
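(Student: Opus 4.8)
The plan is to reduce the four inclusions to the case $G = \mathrm{GL}(V)$ by choosing a faithful representation, and then to read them off from the weight-space decomposition attached to $\mu$. First I would fix a closed embedding $G \hookrightarrow G' := \mathrm{GL}(V)$ and denote by $P'_\pm$, $U'_\pm$, $M'$ the subgroups of $G'$ associated with the induced cocharacter $\mathbb{G}_{m,k} \xrightarrow{\mu} G \hookrightarrow G'$; by~\eqref{Eq:cutParab}, together with the same limit characterization for the unipotent radicals, one has $P_\pm = P'_\pm \cap G$, $U_\pm = U'_\pm \cap G$, $M = M' \cap G$. The bookkeeping point is that the functor $\mathcal{L}^+(-)$ (a limit of Weil restrictions, hence a right adjoint) and the congruence functor $\mathcal{L}^1(-)$ preserve fibre products, so these intersections persist: $\mathcal{L}^+P_\pm = \mathcal{L}^+P'_\pm \cap \mathcal{L}^+G$ and $\mathcal{L}^1 U_\pm = \mathcal{L}^1 U'_\pm \cap \mathcal{L}^+G$; and $\mathcal{L}^+G = \mathcal{L}^+G' \cap \mathcal{L}G$ because $G$ is closed in $G'$ and $A[[t]] \hookrightarrow A((t))$ is injective. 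Granting this, it suffices to prove the four inclusions for $G'$ and then intersect with $\mathcal{L}^+G$, resp.\ $\mathcal{L}G$; the remaining inclusions $\mathcal{L}^1 U_\pm \subseteq \mathcal{K}_1$ are then just functoriality of $\mathcal{L}^1(-)$ applied to $U_\pm \hookrightarrow G$.

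So I may take $G = \mathrm{GL}(V)$ and decompose $V = \bigoplus_{n \in \mathbb{Z}} V_n$ into the weight spaces on which $\mu(z)$ acts by $z^n$. Then $P_+$ is the stabilizer of the descending filtration $V^{\geq n} := \bigoplus_{m \geq n} V_m$ (its points are block-upper-triangular with respect to the weight grading), $P_-$ is the stabilizer of the ascending one, $M = \prod_n \mathrm{GL}(V_n)$, and $U_\pm \subseteq P_\pm$ are the unipotent radicals (identity on the diagonal blocks). Under the grading $\mathrm{End}(V) = \bigoplus_{m,n} \mathrm{Hom}(V_m, V_n)$, conjugation by $\mu(t)$ multiplies the $\mathrm{Hom}(V_m, V_n)$-component by $t^{\,n-m}$ and conjugation by $\mu(t)^{-1}$ by $t^{\,m-n}$; this single observation is the whole content. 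Concretely, for a $k$-algebra $A$ and $g \in \mathcal{L}^+P_+(A) = P_+(A[[t]])$ the only nonzero blocks $g_{nm}$ have $n \geq m$, so $\mu(t) g \mu(t)^{-1} = \sum_{n\geq m} t^{\,n-m} g_{nm}$ has all entries in $A[[t]]$ and determinant $\det g \in A[[t]]^\times$, hence lies in $\mathcal{L}^+G(A)$; if moreover $g \in \mathcal{L}^+U_+(A)$ then $g_{nn} = \mathrm{id}_{V_n}$ and $n-m \geq 1$ on every nonzero off-diagonal block, so $\mu(t) g \mu(t)^{-1} \equiv \mathrm{id} \bmod t$ and (as $\mu$ normalizes $U_+$) it lies in $\mathcal{L}^1 U_+(A) \subseteq \mathcal{K}_1(A)$. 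The statements for $P_-$ and $U_-$ are the mirror image, conjugating by $\mu(t)^{-1}$ and using block-lower-triangularity, so that the relevant exponents $m-n$ are $\geq 0$, resp.\ $\geq 1$.

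I expect the only genuinely delicate point to be the first column of inclusions, namely that the conjugate of $\mathcal{L}^+P_\pm$ lands in $\mathcal{L}^+G$ and not merely in $\mathcal{L}G$: this is exactly where block-triangularity with respect to the weight grading is essential, since it forces the exponents $n-m$ (resp.\ $m-n$) produced by conjugation to be non-negative, so that no negative powers of $t$ are introduced. Everything else is the compatibility, recorded in the first paragraph, of $\mathcal{L}^+$ and $\mathcal{L}^1$ with the closed embedding $G \hookrightarrow \mathrm{GL}(V)$ and with~\eqref{Eq:cutParab}, together with the elementary graded matrix computation; in particular no Bruhat-type input and nothing characteristic-dependent is needed, consistent with the lemma being stated over an arbitrary field.
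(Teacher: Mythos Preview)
Your proof is correct, but the paper takes a different and more intrinsic route. Rather than embedding $G$ into $\mathrm{GL}(V)$ and computing with block matrices, the paper argues directly from the limit definition of $P_+$: for $g \in P_+(A)$ with $A = R[[t]]$, the very definition of $P_+$ says the morphism $f_{\mu,g}\colon \mathbb{G}_{m,A} \to G_A$, $x \mapsto \mu(x)g\mu(x)^{-1}$, extends to $F_{\mu,g}\colon \mathbb{G}_{a,A} \to G_A$; since $t \in \mathbb{G}_a(A)$ one reads off $\mu(t)g\mu(t)^{-1} = F_{\mu,g}(t) \in G(A) = \mathcal{L}^+G(R)$ immediately. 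For $U_+$ one then uses $F_{\mu,g}(0) = 1$ (which is the definition of $U_+$) together with functoriality along $A \to R$, $t \mapsto 0$, to see the reduction mod $t$ is trivial. This avoids your bookkeeping about $\mathcal{L}^+$ and $\mathcal{L}^1$ commuting with the closed embedding and with~\eqref{Eq:cutParab}. On the other hand, your block-matrix computation is precisely the device the paper deploys for the \emph{next} lemma (the inclusion ${}^{\pm}\mathcal{H} \subseteq \mathcal{H}^{\pm}$), so you have in effect anticipated that argument; the intrinsic approach buys brevity here, while yours makes the underlying mechanism---non-negative $t$-exponents forced by block-triangularity---completely explicit.
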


\begin{proof}
	We will demonstrate the assertions concerning \( P_+ \) and \( U_+ \); the statements for \( P_- \) and \( U_- \) can be shown similarly by changing signs. Let \( R \) be a \( k \)-algebra and set \( A = R[[t]] \). Take an element \( g \in \mathcal{L}^+ P_+(R) = P_+(A) \). By definition of \( P_+ \), the homomorphism of \( A \)-group schemes 
	\[
	f_{\mu, g}: \mathbb{G}_{m,A} \to G_{A}, \quad x \mapsto \mu(x) g \mu(x)^{-1},
	\] 
	extends to a morphism of \( A \)-schemes \( F_{\mu, g}: \mathbb{G}_{a, A} \to G_A  \). Since \( t \in \mathbb{G}_{m} \big(A[\frac{1}{t}]\big) \cap \mathbb{G}_{a}(A) \), we find that 
	\[
	\mu(t) g \mu(t)^{-1} = f_{\mu, g}(t) = F_{\mu, g}(t) \in G(A).
	\]
	If \( g \in \mathcal{L}^+ U_+(R) = U_+(A) \), the functoriality of \( F_{\mu, g} \) for the \( A \)-algebra homomorphism \( \mathrm{pr}: A \to R \) implies that
	\[
	\overline{\mu(t) g \mu(t)^{-1}} = \overline{F_{\mu, g}(t)} = F_{\mu, g}(0) = 1 \in G(R),
	\]
	where ``0" denotes the zero element in \( \mathbb{G}_{a}(R) \). 
\end{proof}

Denote by \( \mathrm{R}_u \mathcal{H}^{\pm} \subseteq \mathcal{H}^{\pm} \) the pullback of \( U_{\pm} \subseteq P_{\pm} \) along \( \mathrm{pr}: \mathcal{L}^+ G \to G \). We have the canonical isomorphisms between group \( k \)-functors:
\[
\mathcal{H}^+ / \mathrm{R}_u \mathcal{H}^+ \cong M \cong \mathcal{H}^- / \mathrm{R}_u \mathcal{H}^- .
\]
 The inclusions \( M \subseteq \mathcal{H}^{\pm} \) induce decompositions of \( \mathcal{H}^{\pm} \) into semidirect products of \( M \) and \( \mathrm{R}_u \mathcal{H}^{\pm} \):
\begin{equation}\label{Eq:semprod}
	\mathcal{H}^+ = M \ltimes \mathrm{R}_u \mathcal{H}^+, \quad \mathcal{H}^- = M \ltimes \mathrm{R}_u \mathcal{H}^-.
\end{equation}
Moreover, the inclusions \( U_{\pm} \subseteq P_{\pm} \subseteq \mathcal{H}^{\pm} \) induce the following decompositions of \( \mathrm{R}_u \mathcal{H}^{\pm} \) and \( \mathcal{H}^{\pm} \):
\[
\mathrm{R}_u \mathcal{H}^{\pm} = U_{\pm} \ltimes \mathcal{K}_1, \quad \mathcal{H}^{\pm} = P_{\pm} \ltimes \mathcal{K}_1.
\]
The next two group subfunctors \( {}^{\pm} \mathcal{H} \) of \( \mathcal{L}^+ G \) play an important role in this work:
\[
{}^+ \mathcal{H} := \mathcal{L}^+ G \cap \mu(t)^{-1} \mathcal{L}^+ G \mu(t), \quad {}^- \mathcal{H} := \mathcal{L}^+ G \cap \mu(t) \mathcal{L}^+ G \mu(t)^{-1}.
\]
Clearly, we have the relation \( {}^- \mathcal{H} = \mu(t) {}^+ \mathcal{H} \mu(t)^{-1} \), or equivalently an isomorphism of group functors:
\[
\mathrm{Ad}_{\mu(t)}: {}^+ \mathcal{H} \cong {}^- \mathcal{H}, \quad g \mapsto \mu(t) g \mu(t)^{-1}.
\]

\begin{definition}[Loop Zip Group]\label{Def:Emuloop}
	We define a group subfunctor \( \mathcal{E}_{\mu}^{\dagger} \subseteq \mathcal{H}^+ \times_k \mathcal{H}^- \) as the pullback in the following diagram:
	\begin{equation*}
		\xymatrix{\mathcal{E}_{\mu}^{\dagger} \ar[rr] \ar[dd] && \mathcal{H}^+ \ar[d]^{\mathrm{pr}}\\
			&& P_+ \ar[d]^{\mathrm{pr}}\\
			\mathcal{H}^- \ar[r] & P_- \ar[r]^{\mathrm{pr}} & M.}
	\end{equation*}
	We call it the \emph{loop zip group} associated with \( \mu \).
\end{definition}

We are now ready to present the key lemma, which plays a fundamental role in this work. We also note that while part (2) of Lemma \ref{Lem:keyInclu} below is useful and its proof is more technical, it is not utilized in the subsequent sections of this work, except in Lemma \ref{Lem:ConnSchbt}. 

\begin{lemma}\label{Lem:keyInclu}
	The following holds:
	\begin{enumerate}
		\item We always have the inclusion \( {}^{\pm} \mathcal{H} \subseteq \mathcal{H}^{\pm} \) of group subfunctors of \( \mathcal{L}^+ G \).
		\item If \( \mu \) is minuscule, we have \( {}^{\pm} \mathcal{H} = \mathcal{H}^{\pm} \); in particular, \( \mathcal{H}^{-} = \mu(t) \mathcal{H}^{+} \mu(t)^{-1} \).
		\item Denote by \( \iota: {}^{\pm} \mathcal{H} \subseteq \mathcal{H}^{\pm} \) the canonical inclusions; then the image of the embedding 
		\[
		(\iota \circ \mathrm{Ad}_{\mu(t)}, \ \iota): {}^{+} \mathcal{H} \to \mathcal{H}^- \times_k \mathcal{H}^+
		\]
		lies in \( \mathcal{E}_{\mu}^{\dagger} \subseteq  \mathcal{H}^- \times_k \mathcal{H}^+ \). In particular, \( \mathcal{E}_{\mu}^{\dagger} \) is mapped to \( \mathsf{E}_{\mu} \) under \( \mathrm{Pr}: \mathcal{H}^- \times_k \mathcal{H}^+ \to P_- \times_k P_+ \).
	\end{enumerate} 
\end{lemma}	

\begin{proof}
	We begin by showing (1) and (3). The proof can be reduced to the case of general linear groups. Choose an embedding of algebraic groups \( G \hookrightarrow \mathrm{GL}_{N,k} \) for some \( N \) and denote by \( \mu' \) the induced cocharacter of \( \mathrm{GL}_{N,k} \). We use \( \mathcal{H}'^{\pm}, {}^{\pm} \mathcal{H}', \mathcal{E}_{\mu'}^{\dagger} \) to denote the counterparts of \( \mathcal{H}^{\pm}, {}^{\pm} \mathcal{H}, \mathcal{E}_{\mu}^{\dagger} \) for \( \mathrm{GL}_{N, k} \). The following commutative diagram holds:
    	\begin{equation}
		\xymatrix{{}^+ \mathcal{H} \ar[r] \ar[d] & \mathcal{L}^+ G \ar[d]\\
			{}^+ \mathcal{H}' \ar[r] & \mathcal{L}^+ \mathrm{GL}_{N, k},}
	\end{equation}
	where all arrows are naturally induced inclusions. From \eqref{Eq:cutParab}, we have
	\[
	\mathcal{H}^{\pm} = \mathcal{H}'^{\pm} \cap \mathcal{L}^+ G, \quad \mathcal{E}_{\mu}^{\dagger} = \mathcal{E}_{\mu'}^{\dagger} \cap \big(\mathcal{H}^- \times_k \mathcal{H}^+\big).
	\]
	From these equalities and the commutative diagram above, it follows that the lemma can be established by dealing with the case where \( G = \mathrm{GL}_{N, k} \) and \( \mu = \mu' \). After possibly conjugating by an element in \( \mathrm{GL}_N(k) \), we may assume that \( \mu \) factors through the standard maximal torus of \( \mathrm{GL}_{N,k} \) consisting of diagonal matrices, given by 
	\[
	t \mapsto \mathrm{diag}(t^{d_1}, \ldots, t^{d_1}, \ldots, t^{d_r}, \ldots, t^{d_r}), \quad d_1 > \ldots > d_r,
	\]
	with multiplicities forming a partition \( (n_1, \ldots, n_r) \) of \( N \).

	Given a \( k \)-algebra \( R \), we may write an element \( g \in G(R) \) in terms of block matrices \( \big(g_{ij}\big)_{1\leq i,j\leq r} \), where each \( g_{ij} \in \mathrm{M}_{n_i \times n_j}(R) \). Such a \( g \) belongs to \( P_+(R) \) if and only if \( g_{ij} = 0 \) for all \( i > j \). An element \( g \in P_+(R) \) belongs to \( U_+(R) \) if \( g_{ii} = \mathrm{I}_{n_i} \), i.e., \( g_{ii} \) is the \( n_i \times n_i \) identity matrix. Similarly, elements in \( P_-(R) \) and \( U_-(R) \) can be characterized by taking \( i < j \).

	Now consider those elements in \( \mathcal{L}^+ G(R) \). By direct computation, for each such element \( g \), we have the equality
	\[
	\mu(t)^{-1} g \mu(t) = \big(t^{d_j - d_i} g_{ij}\big)_{1 \leq i,j \leq r}.
	\]
	Thus, for each \( h \in {}^+ \mathcal{H}(R) \) such that \( h = \mu(t)^{-1} g \mu(t) \) for some \( g \in \mathcal{L}^+ G(R) \), we must have \( \bar{h} \in P_+(R) \), i.e., \( h \in \mathcal{H}^+(R) \),  since for \( i < j \), 
	\[ 
	g_{ij} \in t^{d_i - d_j} \mathrm{M}_{n_i \times n_j}(R[[t]]) \subseteq t \mathrm{M}_{n_i \times n_j}(R[[t]]). 
	\]  
	Hence we also have \( g \in \mathcal{H}^-(R) \). Since \( \bar{h} \) and \( \bar{g} \) share the same Levi component \( \big(\bar{g}_{ii}\big)_{1 \leq i \leq r} \in M(R) \), it follows that \( (g, h) \in \mathcal{E}_{\mu}^{\dagger}(R) \).

	Next, we prove (2). We only demonstrate \( \mathcal{H}^- \subseteq {}^{-} \mathcal{H} \); the equality \( \mathcal{H}^+ \subseteq {}^{+} \mathcal{H} \) can be shown similarly. Since we have \( {}^{-} \mathcal{H} \subseteq \mathcal{H}^- \), we obtain 
	\[
	\mathcal{H}^- \subseteq {}^{-} \mathcal{H} \iff \mu(t)^{-1} \mathcal{H}^- \mu(t) \subseteq \mathcal{L}^+ G \iff \mu(t)^{-1} \mathcal{K}_1 \mu(t) \subseteq \mathcal{L}^+ G,
	\]
	where the second ``\(\iff\)" follows from the decomposition \( \mathcal{H}^- = P_- \ltimes \mathcal{K}_1 \) and the fact \( \mu(t)^{-1} P_- \mu(t) \subseteq \mathcal{K}_1 \) by Lemma \ref{Lem:IntegLem}. Since both \( \mu(t)^{-1} \mathcal{K}_1 \mu(t) \) and \( \mathcal{L}^+ G \) are reduced subschemes of \( \mathcal{L} G \), we may assume \( k \) is algebraically closed, and it suffices to verify the inclusion \( \mu(t)^{-1} \mathcal{K}_1 \mu(t) \subseteq \mathcal{L}^+ G \) at \( k \)-valued points.

	Let \( g \in \mathcal{K}_1 \subseteq G(k[[t]]) \). We need to show \( \mu(t)^{-1} g \mu(t) \subseteq \mathcal{L}^+ G(k) \). By \cite[Exp. XXVI, Theorem 5.1]{SGA3}, as a scheme over \( k \), \( G \) has a Zariski open covering given by 
	\[
	v P_{+} P_{-} = v U_{+} U_{-} M \text{ with } v \in U_-(k).
	\]
	Since the ring \( k[[t]] \) is local, we may assume \( g \) lies in \( (v U_{+} U_{-} M)(k[[t]]) \). Writing 
	\[
	g = vu_+ u_- m, \quad \text{with } u_+ \in \mathcal{L}^+ U_+(k), \, u_- \in \mathcal{L}^+ U_-(k), \, m \in \mathcal{L}^+ M(k),
	\]
	we have \( 1 = \bar{g} = \bar{v} \bar{u}_+ \bar{u}_- \bar{m} \). This leads to \( \bar{v}^{-1} = \bar{u}_+ \bar{u}_- \bar{m} \), yielding \( \bar{u}_+ = \bar{m} = 1 \). Thus, \( u_+ \in \mathcal{L}^+ U_+(k) \cap \mathcal{K}_1(k) \). Using Lemma \ref{Lem:IntegLem} again, we are reduced to showing the inclusion \( \mu(t)^{-1} u_+ \mu(t) \subseteq \mathcal{L}^+ G \).

	Choose a maximal torus \( T \) of \( G \) such that \( \mu \) factors through \( T \subseteq G \). Let \( \Phi \) be the set of roots of \( G \) with respect to \( T \). For each root \( \alpha \in \Phi \), there is an embedding of algebraic groups
	\[
	U_{\alpha}: \mathbb{G}_{a, k} \to G,
	\]
	unique up to an element in \( k^\times \), such that if \( A \) is a \( k \)-algebra, we have
	\[
	t U_{\alpha}(x) t^{-1} = U_\alpha \big(\alpha(t) x\big) \quad \text{ for all } x \in \mathbb{G}_a(A) = A \text{ and all } t \in T(A).
	\]
	Denote also by \( U_\alpha \) for its image in \( G \). We have the following isomorphisms of closed subschemes of \( G \):
	\begin{equation}\label{Eq:RootIsom}
		\prod_{\alpha \in \Phi(\mu)} U_\alpha \xlongrightarrow{\cong} U_+, \quad (u_\alpha)_\alpha \mapsto \prod u_\alpha,
	\end{equation}
	where \( \Phi(\mu) = \{ \alpha \in \Phi \mid \langle \alpha, \mu \rangle = 1 \} \) (since \( \mu \) is minuscule), and the products can be taken in any fixed order. Taking \( A = k((t)) \) and writing \( u_+ = \prod u_\alpha \) according to this decomposition, each \( u_\alpha \) is equal to \( U_\alpha(x_\alpha) \), with \( x_\alpha \in tk[[t]] \subseteq k[[t]] \). Finally, we conclude that for each \( \alpha \in \Phi(\mu) \),
	\[
	\mu(t)^{-1} u_\alpha \mu(t) = U_\alpha \big(x_\alpha t^{\langle \mu^{-1}, \alpha \rangle}\big) = U_\alpha(x_\alpha t^{-1}) \in \mathcal{L}^+ U_+(k).
	\]
\end{proof}
	
\section{Representability of $\prescript{}{1}{\mathcal{C}_1^\mu}$}\label{S:RepCIImu}
In this section, we prove the main result: the representability of Viehmann's double coset space $\prescript{}{1}{\mathcal{C}_1^\mu}$ for $(G, \mu)$ (definition below) in both the equicharacteristic and mixed characteristic cases. We focus primarily on the equicharacteristic case while comparing both versions within this section. 

Again we let \( k, G, \mu \) be as in \S \ref{S:Preliminary}, without restrictions on the characteristic of \( k \). 
Write $\mathrm{Emb}_{\mu}: G \hookrightarrow \mathcal{L}G$ for the embedding $g \mapsto g \mu(t)$. Clearly, the image of $\mathrm{Emb}_{\mu}$ lies in the locus
\[
\mathcal{C}^{\mu} := \mathcal{C}^{\mu(t)} := \mathcal{L}^+ G \mu(t) \mathcal{L}^+ G \subseteq \mathcal{L} G.
\]
Consider the action of $\mathcal{K}_1$ on $\mathcal{L} G$ by right multiplication, and denote by $\mathcal{L} G / \mathcal{K}_1$ the resulting quotient sheaf. Clearly, $\mathcal{C}^{\mu} \subseteq \mathcal{L} G$ is stable under this $\mathcal{K}_1$ action. Denote by 
\[
\mathcal{C}_1^{\mu} := \mathcal{C}^{\mu} / \mathcal{K}_1
\]
the corresponding quotient sheaf. In fact, $\mathcal{C}_1^{\mu}$ is represented by a smooth scheme of finite type over $k$, being a $G$-torsor over the open affine Schubert variety $\mathrm{Gr}_{\mu} := \mathcal{C}^{\mu} / \mathcal{L}^+ G$, which is a smooth $k$-scheme. Similarly, one may consider the right action of $\mathcal{K}_1 \times_k \mathcal{K}_1$ on $\mathcal{L} G$ (resp. on $\mathcal{C}^{\mu}$) given by $g \cdot (k_1, k_2) = k_1^{-1} g k_2$ and the resulting quotient sheaves,
\[
\prescript{}{1}{\mathcal{C}_1^\mu} := \mathcal{K}_1 \backslash \mathcal{C}^{\mu} / \mathcal{K}_1 \subseteq \mathcal{K}_1 \backslash \mathcal{L} G / \mathcal{K}_1.   
\]
In \cite{ViehmannTruncation1}, Viehmann studied certain orbit spaces of (the $\bar{k}$-points of) the double coset space $\mathcal{K}_1 \backslash \mathcal{L} G / \mathcal{K}_1$ in the case where $k$ is algebraically closed of characteristic $p$, where the subspace $\prescript{}{1}{\mathcal{C}_1^\mu}$ comprises building blocks by the Cartan decomposition. We shall refer to the fpqc $k$-sheaf $\prescript{}{1}{\mathcal{C}_1^\mu}$ as \emph{Viehmann's double coset space} of type $\mu$. 

\subsection{The Sheaf $\prescript{}{1}{\mathcal{C}_1^\mu}$ is Representable}\label{S:Rep}

\begin{definition}[The Normal Zip Group of $\mu$]\label{Def:EmuNorm}
	For a cocharacter $\mu: \mathbb{G}_{m,k} \to G$, let $\mathsf{E}_{\mu} \subseteq P_- \times_{k} P_+$ be the closed subgroup whose $A$-valued points for a $k$-algebra $A$ are given by 
	\begin{align*}
		\mathsf{E}_{\mu}(A) = \{(p_- = u_- m, p_+ = u_+ m) \;|\; m \in M(A), \; u_- \in U_-(A), \; u_+ \in U_+(A)\}.
	\end{align*}
\end{definition}

\begin{remark}\label{Rmk:zipgp}
	The zip group $\mathsf{E}_{\mu}$ defined here is a special case of zip groups introduced in \cite{PinkWedhornZiegler1}. The definition in loc. cit. is much more general than those we present here. Later, we will focus on the case where $k$ has characteristic $p$. Then, Frobenius twisted analogues of the zip group defined above will also be considered. For this reason, we call our $\mathsf{E}_{\mu}$ here the \emph{normal zip group} associated with $\mu$, or simply the \emph{zip group} associated with $\mu$ when no confusion arises. 
\end{remark}

Clearly, we have the decomposition $\mathsf{E}_{\mu} = U_- \rtimes M \ltimes U_+$ of algebraic groups over $k$. Since $U_{\pm}$ and $M$ are all connected and smooth over $k$, $\mathsf{E}_{\mu}$ is also connected and smooth. Consider the right action of $\mathsf{E}_{\mu}$ on the $k$-group $G^2 := G \times_{k} G$ given by left multiplication (see our convention in \S \ref{NotatConven}),
\begin{equation*}
	G^2 \times_{k} \mathsf{E}_{\mu} \longrightarrow G^2, \quad \big((g_1, g_2), (p_-, p_+)\big) \longmapsto \big(p_{-}^{-1} g_1, p_+^{-1} g_2\big).    
\end{equation*}
Denote by $\mathsf{E}_{\mu} \backslash G^2$ the corresponding fpqc quotient sheaf. It is a general fact from linear algebraic groups that the quotient sheaf $\mathsf{E}_{\mu} \backslash G^2$ is represented by a scheme that is separated and of finite type over $k$, and the canonical projection map $G^2 \to \mathsf{E}_{\mu} \backslash G^2$ is smooth and surjective; see for example \cite[Proposition 7.15, Theorem 7.18]{MilneAlgebraicGroups}. Hence, it follows from the smoothness of $G$ that $\mathsf{E}_{\mu} \backslash G^2$, viewed as a $k$-scheme, is also smooth (\cite[05B5]{stacks-project}). Moreover, since our $G$ is connected (hence geometrically connected), and the formation of the quotient sheaf $\mathsf{E}_{\mu} \backslash G^2$ commutes with changes of base field, the quotient scheme $\mathsf{E}_{\mu} \backslash G^2$ is also geometrically connected.

Let us consider the following map of fpqc $k$-sheaves:
\begin{equation*}
	\psi_0: G^2 \longrightarrow \prescript{}{1}{\mathcal{C}_1^\mu}, \quad (g, h) \longmapsto g^{-1} \mu(t) h. 
\end{equation*}

\begin{theorem}[The Sheaf $\prescript{}{1}{\mathcal{C}_1^\mu}$ is Representable]\label{Thm:keyRep}The following holds:
	\begin{enumerate}
		\item The map $\psi_0: G^2 \to \prescript{}{1}{\mathcal{C}_1^\mu}$ defined above is $\mathsf{E}_{\mu}$-invariant and hence induces a map of fpqc $k$-sheaves,
		\[
		\psi: \mathsf{E}_{\mu} \backslash G^2 \longrightarrow \prescript{}{1}{\mathcal{C}_1^\mu}.
		\]
		
		\item The induced map $\psi$ is an isomorphism of $k$-sheaves. In particular, $\prescript{}{1}{\mathcal{C}_1^\mu}$ is represented by a geometrically connected scheme, smooth and separated of finite type over $k$. Moreover, we have $\dim \prescript{}{1}{\mathcal{C}_1^\mu} = \dim G$. 
		
		\item \label{Thm:keyRepMulp} Let $n\geq 1$ be an integer and set $\mu' = n\mu$. Then the association $g \mu(t) h \mapsto g \mu'(t) h$ on local sections gives a well-defined isomorphism of $k$-schemes:
		\[
		\epsilon: \prescript{}{1}{\mathcal{C}_1^\mu} \cong \prescript{}{1}{\mathcal{C}_1^{\mu'}},
		\]
		fitting into the commutative diagram below:
		\begin{equation*}\label{qIdentity}
			\xymatrix{\mathsf{E}_{\mu} \backslash G^2 \ar@{=}[d] \ar[r]^{\cong} & \prescript{}{1}{\mathcal{C}_1^\mu} \ar[d]_{\cong}^{\epsilon} \\
				\mathsf{E}_{\mu'} \backslash G^2 \ar[r]^{\cong} & \prescript{}{1}{\mathcal{C}_1^{\mu'}}.}
		\end{equation*}
	\end{enumerate}
\end{theorem}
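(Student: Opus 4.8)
The plan is to prove the three items of Theorem~\ref{Thm:keyRep} in order, using the structural results of \S\ref{S:UnipGps} — especially Lemmata~\ref{Lem:IntegLem} and~\ref{Lem:keyInclu} — as the technical engine. For part~(1), I would check directly on local sections that $\psi_0$ is $\mathsf{E}_\mu$-invariant: given a $k$-algebra $A$, a pair $(g,h)\in G^2(A)$ and $(p_-,p_+)\in\mathsf{E}_\mu(A)$ with common Levi component $m$, write $p_-=u_-m$ and $p_+=u_+m$. Then $\psi_0$ applied to $(p_-^{-1}g,\ p_+^{-1}h)$ gives $g^{-1}p_-\,\mu(t)\,p_+^{-1}h = g^{-1}u_-m\,\mu(t)\,m^{-1}u_+^{-1}h$. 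Since $m\in M(A)$ commutes with $\mu(t)$, this equals $g^{-1}u_-\,\mu(t)\,u_+^{-1}h$. By Lemma~\ref{Lem:IntegLem}, $\mu(t)^{-1}u_-\mu(t)\in\mathcal{K}_1$ (here using $u_-\in\mathcal{L}^+U_-$, noting $U_-\subseteq\mathcal{L}^+U_-$) — wait, more carefully: I would absorb $u_-$ by writing $g^{-1}u_-\mu(t)u_+^{-1}h = g^{-1}(u_-)\,\mu(t)\,(u_+^{-1})h$, and note $u_-=\mu(t)\bigl(\mu(t)^{-1}u_-\mu(t)\bigr)\mu(t)^{-1}$ with $\mu(t)^{-1}u_-\mu(t)\in\mathcal{K}_1$, so that $u_-\mu(t)=\mu(t)k_1$ with $k_1\in\mathcal{K}_1$; similarly $\mu(t)u_+^{-1}=k_1'\mu(t)$ — no, rather $u_+^{-1}=\mu(t)^{-1}\bigl(\mu(t)u_+^{-1}\mu(t)^{-1}\bigr)\mu(t)$, so the extra factors land in $\mathcal{K}_1$ on the correct side. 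Hence the whole expression is $g^{-1}\mu(t)h$ modulo the left and right $\mathcal{K}_1$-actions defining $\prescript{}{1}{\mathcal{C}_1^\mu}$, which is exactly invariance; this gives the induced $\psi\colon\mathsf{E}_\mu\backslash G^2\to\prescript{}{1}{\mathcal{C}_1^\mu}$.

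For part~(2), I would show $\psi$ is an isomorphism of fpqc sheaves by exhibiting inverse maps fpqc-locally, or equivalently by checking surjectivity and that the $\mathsf{E}_\mu$-orbits are exactly the fibres. Surjectivity: an arbitrary local section of $\prescript{}{1}{\mathcal{C}_1^\mu}$ is represented, fpqc-locally, by some $k_1\mu(t)k_2$ with $k_i\in\mathcal{L}^+G$; using $\mathcal{L}^+G=G\ltimes\mathcal{K}_1$ and the Iwahori-type factorizations $\mathcal{H}^\pm=P_\pm\ltimes\mathcal{K}_1$ from \S\ref{S:UnipGps}, together with the fact (again Lemma~\ref{Lem:IntegLem}) that conjugating $\mathcal{K}_1$ by $\mu(t)^{\pm1}$ stays inside $\mathcal{L}^+G$ on the correct side, I can massage $k_1\mu(t)k_2$ into the form $g^{-1}\mu(t)h$ with $g,h\in G$, possibly after an fpqc cover. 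For injectivity of the induced map on orbits: suppose $g_1^{-1}\mu(t)h_1$ and $g_2^{-1}\mu(t)h_2$ agree in $\prescript{}{1}{\mathcal{C}_1^\mu}$, i.e.\ there exist $k_1,k_2\in\mathcal{K}_1$ (fpqc-locally) with $g_2g_1^{-1}\mu(t)h_1 = \mu(t)h_2 k_2$ after suitable relabelling — then $\mu(t)^{-1}(g_2g_1^{-1})\mu(t)\in{}^+\mathcal{H}$, and by Lemma~\ref{Lem:keyInclu}(3) the pair $(g_2g_1^{-1},\ \ast)$ lands in $\mathsf{E}_\mu$ after projection to $P_-\times P_+$, forcing $(g_1,h_1)$ and $(g_2,h_2)$ to lie in one $\mathsf{E}_\mu$-orbit. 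Once $\psi$ is an isomorphism, representability, smoothness, separatedness, finite type, and geometric connectedness of $\prescript{}{1}{\mathcal{C}_1^\mu}$ all transfer from $\mathsf{E}_\mu\backslash G^2$, which was already established above to have these properties. The dimension count is $\dim(\mathsf{E}_\mu\backslash G^2)=2\dim G-\dim\mathsf{E}_\mu$; since $\dim\mathsf{E}_\mu=\dim U_-+\dim M+\dim U_+=\dim G$ (as $G=U_-MU_+$ densely), this gives $\dim\prescript{}{1}{\mathcal{C}_1^\mu}=\dim G$.

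For part~(3), I would observe that $\mathsf{E}_{n\mu}=\mathsf{E}_\mu$ as closed subgroups of $P_-\times P_+$: the parabolics $P_{\pm}$, their unipotent radicals $U_\pm$, and the Levi $M$ depend only on the ray spanned by $\mu$, not on $\mu$ itself, since for $n\geq1$ the limits $\lim_{t\to0}\mu(t)^n g\mu(t)^{-n}$ exist iff $\lim_{t\to0}\mu(t)g\mu(t)^{-1}$ does. Granting this equality of zip groups, the two rows of the asserted diagram are the isomorphisms $\psi$ of part~(2) for $\mu$ and for $\mu'=n\mu$ respectively, with identical source $\mathsf{E}_\mu\backslash G^2=\mathsf{E}_{\mu'}\backslash G^2$. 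The map $\upvarepsilon$ is then \emph{defined} to be the composite $\prescript{}{1}{\mathcal{C}_1^\mu}\xrightarrow{\psi^{-1}}\mathsf{E}_\mu\backslash G^2\xrightarrow{\psi'}\prescript{}{1}{\mathcal{C}_1^{\mu'}}$, which is tautologically an isomorphism making the square commute; one then checks on local sections that this composite is exactly $g\mu(t)h\mapsto g\mu'(t)h$, which is immediate from the formula $\psi_0(g,h)=g^{-1}\mu(t)h$.

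The main obstacle I expect is part~(2), specifically the surjectivity and injectivity bookkeeping: one must be scrupulous about which side each $\mathcal{K}_1$-factor sits on when commuting it past $\mu(t)$, since $\mu(t)\mathcal{K}_1\mu(t)^{-1}\not\subseteq\mathcal{K}_1$ in general (only the $U_+$-part behaves well in that direction, by Lemma~\ref{Lem:IntegLem}), and the delicate point — genuinely using Lemma~\ref{Lem:keyInclu}, which is flagged as the key lemma — is that the fibre of $\psi_0$ is \emph{no larger} than an $\mathsf{E}_\mu$-orbit, i.e.\ that the non-freeness of the $\mathcal{K}_1\times\mathcal{K}_1$-action on $\mathcal{C}^\mu$ is precisely accounted for by the (also non-free) $\mathsf{E}_\mu$-action on $G^2$. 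Everything else — part~(1), the descent of geometric properties, and part~(3) — is formal once the isomorphism in part~(2) is in hand.
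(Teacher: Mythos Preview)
Your proposal is correct and follows essentially the same route as the paper's proof: part~(1) via Lemma~\ref{Lem:IntegLem}, injectivity in part~(2) via Lemma~\ref{Lem:keyInclu}, the dimension count via $\dim\mathsf{E}_\mu=\dim U_-+\dim M+\dim U_+=\dim G$, and part~(3) via $\mathsf{E}_{n\mu}=\mathsf{E}_\mu$. Two small points where the paper is cleaner: for surjectivity in~(2) the paper simply says ``clearly surjective'', since $\mathcal{L}^+G=\mathcal{K}_1\cdot G=G\cdot\mathcal{K}_1$ (normality of $\mathcal{K}_1$) gives $\mathcal{C}^\mu=\mathcal{K}_1\,G\mu(t)G\,\mathcal{K}_1$ on the nose, so no massaging past $\mu(t)$ is needed; and for injectivity the paper sets $g:=g_2k_1g_1^{-1}$ and $h:=h_2k_2h_1^{-1}$ (keeping the $\mathcal{K}_1$-factors, which you dropped in your sketch), observes directly that $g=\mu(t)h\mu(t)^{-1}$ so $h\in{}^+\mathcal{H}$, $g\in{}^-\mathcal{H}$, and then Lemma~\ref{Lem:keyInclu}(3) gives $(\bar g,\bar h)\in\mathsf{E}_\mu$ with $\bar g=g_2g_1^{-1}$, $\bar h=h_2h_1^{-1}$ since $\bar k_i=1$.
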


\begin{proof}
	(1) Let $A$ be a $k$-algebra. For all $g, h \in G(A)$ and $(p_-, p_+) \in \mathsf{E}_{\mu}(A)$, we have the equality
	\[
	g^{-1} p_- \mu(t) p_+^{-1} h = X g^{-1} \mu(t) h Y,
	\]
	where we define
	\[
	X := g^{-1} \big(\mu(t) u_+^{-1} \mu(t)^{-1}\big)g, \quad Y := (u_+^{-1} h)^{-1} \big(\mu(t)^{-1} u_- \mu(t)\big)(u_+^{-1} h).
	\]
	Here, $u_+$ and $u_-$ are the unipotent radical components of $p_+$ and $p_-$, respectively. From Lemma \ref{Lem:IntegLem}, we know that both $X$ and $Y$ lie in $\mathcal{K}_1(A)$. 

	(2) The map $\psi$ is clearly surjective, and so we only need to show that it is also injective as a map of presheaves. For this, it suffices to show that the composition below is injective:
	\[
	\big(\mathsf{E}_{\mu} \backslash G^2\big)^{\mathrm{pre}} \hookrightarrow \mathsf{E}_{\mu} \backslash G^2 \to \prescript{}{1}{\mathcal{C}_1^\mu},
	\]
	where $\big(\mathsf{E}_{\mu} \backslash G^2\big)^{\mathrm{pre}}$ denotes the obvious presheaf whose fpqc sheafification is $\mathsf{E}_{\mu} \backslash G^2$. Take $g_1, h_1, g_2, h_2 \in G(A)$ and suppose that there exist $k_1, k_2 \in \mathcal{K}_1(A)$ such that 
	\[
	g_1^{-1} \mu(t) h_1 = k_1^{-1} g_2^{-1} \mu(t) h_2 k_2.
	\]
	Write $g := g_2 k_1 g_1^{-1}$ and $h := h_2 k_2 h_1^{-1}$. Then by the definitions of ${}^+\mathcal{H}$ and ${}^-\mathcal{H}$, we have 
	\[
	h \in {}^+\mathcal{H}(A), \quad g = \mu(t) h \mu(t)^{-1} \in {}^-\mathcal{H}(A), \text{ and } (g, h)\in \mathcal{E}^{\dagger}(A).
	\]
	By applying Lemma \ref{Lem:keyInclu}, we know $(\bar{g}, \bar{h}) \in \mathsf{E}_{\mu}(A)$. Therefore, we have
	\[
	(g_1, h_1) = (g_2, h_2) \cdot (\bar{g}^{-1}, \bar{h}^{-1}).
	\]
	Hence, $\psi$ is an isomorphism. For the dimension of $\prescript{}{1}{\mathcal{C}_1^\mu}$, we note that the multiplication map 
	\[
	U_- \times_k M \times_k U_+ \to G,
	\]
	is an open immersion of algebraic varieties over $k$; see for example \cite[Theorem 13.33]{MilneAlgebraicGroups}. It follows from this fact that $\dim \mathsf{E}_{\mu} = \dim G$, and hence we have
	\[
	\dim \prescript{}{1}{\mathcal{C}_1^\mu} = 2 \dim G - \dim \mathsf{E}_{\mu} = \dim G. 
	\] 
	
	(3) We note that $\mu$ and $\mu'$ determine the same algebraic groups $M \subseteq P_{\pm}$ (see e.g., \cite[Theorem 4.1.7]{ConradReductiveGroupSchemes}) and hence the same zip groups $\mathsf{E}_{\mu} = \mathsf{E}_{\mu'}$. Thus, (3) essentially follows from (2). 
\end{proof}

\begin{example} The reader may find the next example amusing, where our cocharacter $\mu$ is trivial:
\[
G \cancong \mathcal{K}_1 \backslash \mathcal{L}^+ G \cancong \mathcal{K}_1 \backslash \mathcal{L}^+ G / \mathcal{K}_1 \cancong \mathcal{L}^+ G / \mathcal{K}_1 \cancong G.
\]
\end{example}

\begin{remark}\label{Rmk:trunSch}
	Closely related to the sheaf $\prescript{}{1}{\mathcal{C}_1^\mu}$ is the quotient stack $[\mathcal{K}_1 \backslash \mathcal{C}^\mu / \mathcal{K}_1]$, referred to as the \emph{moduli stack of 1-1 truncated modifications of type $\mu$}. For any $k$-algebra $R$, the groupoid $[\mathcal{K}_1 \backslash \mathcal{C}^\mu / \mathcal{K}_1](R)$ is equivalent to the groupoid of triples \[(\mathcal{E} \overset{\alpha}{\dashrightarrow} \mathcal{F}, \; \mathcal{E}_0|_{\mathrm{Spec} R} \overset{\epsilon_1}{\cong} \mathcal{E}|_{\mathrm{Spec} R}, \; \mathcal{E}_0|_{\mathrm{Spec} R} \overset{\epsilon_2}{\cong} \mathcal{F}|_{\mathrm{Spec} R}),\]
	where $\mathcal{E}, \mathcal{F}$ are $G$-torsors over $\mathrm{D}_R := \mathrm{Spec} R[[t]]$, $\mathcal{E}_0$ is the trivial $G$-torsor over $\mathrm{D}_R$, and $\alpha: \mathcal{E} \overset{\alpha}{\dashrightarrow} \mathcal{F}$ is a modification of type $\mu$. In other words, it is a morphism of $G$-torsors $\mathcal{E}|_{\mathrm{D}_R^*} \overset{\alpha}{\rightarrow} \mathcal{F}|_{\mathrm{D}_R^*}$ of type $\mu$, where $\mathrm{D}_R^* := \mathrm{Spec} R((t))$. The term ``1-1 \emph{truncation}" indicates that the trivializations $\epsilon_1$ and $\epsilon_2$ are confined to the closed subscheme $\mathrm{Spec} R = \mathrm{Spec} R[[t]] / t^n$ of $\mathrm{D}_R$, with $n=1$.

	Since $\prescript{}{1}{\mathcal{C}_1^\mu}$ equals the coarse moduli sheaf of $[\mathcal{K}_1 \backslash \mathcal{C}^\mu / \mathcal{K}_1]$, our theorem regarding the representability of $\prescript{}{1}{\mathcal{C}_1^\mu}$ can also be expressed as follows: \emph{the coarse moduli space of $[\mathcal{K}_1 \backslash \mathcal{C}^\mu / \mathcal{K}_1]$ exists as a scheme.}
\end{remark}

\subsection{Connections with affine Schubert Varieties}

It follows from the relation $\mu(t)^{-1} \mathcal{L}^+ P_- \mu(t) \subseteq \mathcal{L}^+ G$ in Lemma \ref{Lem:IntegLem} and the relation ${}^-\mathcal{H} \subseteq \mathcal{H}^{-}$ in Lemma \ref{Lem:keyInclu} that there is always an immersion $j: P_- \backslash G \to \mathrm{Gr}_{\mu}$ from the (partial) flag variety $P_- \backslash G$ to the open affine Schubert variety $\mathrm{Gr}_{\mu} := \mathcal{L}^+G \mu(t) \mathcal{L}^+G / \mathcal{L}^+G$, induced by the association $g \mapsto g^{-1} \mu(t)$. Consider the following diagram of $k$-schemes:
\begin{equation}\label{eq:noncommdiag}
	\xymatrix{\mathcal{C}_1^{\mu} \ar[d] \ar[r] & \prescript{}{1}{\mathcal{C}_1^\mu} \ar[r]^{\cong \ \ \ } & \mathsf{E}_{\mu} \backslash G^2 \ar[d] \\
		\mathrm{Gr}_{\mu} && P_{-} \backslash G \ar@{_{(}->}_j[ll],}
\end{equation}
where the left vertical arrow is the canonical projection and the right vertical arrow is induced by the composition of the first projection $G^2 \xrightarrow{\mathrm{pr}_1} G$ with the canonical projection $G \to P_- \backslash G$. From the surjectivity of the left vertical arrow, we see that this diagram cannot be commutative unless $j$ is an isomorphism:

\begin{lemma}\label{Lem:ConnSchbt}
	If $\mu$ is minuscule, the projection $\mathcal{C}_1^\mu \twoheadrightarrow \mathrm{Gr}_{\mu}$ factors through the quotient $\mathcal{C}_1^\mu \twoheadrightarrow \prescript{}{1}{\mathcal{C}_1^\mu}$.
\end{lemma}
\begin{proof}
	In case $\mu$ is minuscule, it is a well-known fact that $j$ becomes an isomorphism. This fact can be demonstrated using Lemma \ref{Lem:keyInclu}. 
\end{proof}

\subsection{Connection with Lusztig's Variety}\label{S:Lusztig}
After the discovery of the representability result \ref{Thm:keyRep}, we learned from the paper of Pink-Wedhorn-Ziegler \cite{PinkWedhornZiegler1} that the representing quotient scheme $\mathsf{E}_{\mu} \backslash G^2$ has several other incarnations. To be precise, $\mathsf{E}_{\mu} \backslash G^2$ also represents some of the coset varieties defined in loc. cit., which, in turn, is isomorphic to special cases of Lusztig's varieties defined in \cite{LuszParobCharShvII}. For the reader's convenience, we detail below the corresponding coset variety and Lusztig variety, following \cite[\S 12.1]{PinkWedhornZiegler1}, with slightly modified normalization (e.g., left actions vs. right actions). 

We assume in this subsection that $k$ is algebraically closed, as in the setting of \cite{PinkWedhornZiegler1} and \cite{LuszParobCharShvII}. Thanks to this assumption, for a $k$-variety $X$, we abuse language and do not distinguish between $X$ and $X(k)$. 

Via the following isomorphism $\iota: G^2\cong G^2, (g, h)\mapsto (g^{-1}, h^{-1})$, 
	the $\mathsf{E}_{\mu}$ action on  $G^2$ that we define in  \S \ref{S:Rep}  translates into the usual action by right multiplication: $(g, h)\circ (p_-, p_+)= (gp_-, hp_+)$.  Denote by $ G^2/\mathsf{E}_{\mu}$  the quotient scheme of $G^2$ by the action of $\mathsf{E}_{\mu}$ by the usual right multiplication. Then  $\iota$ induces an isomorphism of algebraic $k$-varieties, \[\iota: \mathsf{E}_{\mu}\backslash G^2 \cong G^2/\mathsf{E}_{\mu}.\]
	
	Note that the tuple $ Z:=(G, P_-, P_+, \mathrm{id}: M\to M)$ forms a connected algebraic datum in the sense of \cite[Definition3.1]{PinkWedhornZiegler1}. 
	\begin{definition}[cf. {\cite[Definition12.3]{PinkWedhornZiegler1}}]
		The coset space associated to the connected zip datum $Z$ is the set $C_{Z}$ of all triples $(X, Y, \Phi)$ consisting of a right $P_-$-coset $X\subseteq G$, a right $P_+$-coset $Y\subseteq G$, and an isomorphism $\Phi: U_-\backslash X \to U_+\backslash Y$ of $M$-torsors. Here $M$ acts on $U_-\backslash X, U_+\backslash Y$ by right multiplication.
	\end{definition}
	
	A standard base point of $C_Z$ is given by the triple $(P_-, P_+, \mathrm{id}_M)$.  Given $g, h\in G$, denote by
	\[l_g: U_-\backslash P_-\cong U_-\backslash P_-g,\ \ \  l_h: U_+\backslash P_+\cong U_+\backslash P_+ h\]
	the isomorphisms of $M$-torsors induced by right multiplication by $g, h$ respectively.  
	
	\begin{lemma}[cf. {\cite[Lemma 12.5]{PinkWedhornZiegler1}}]
		There is a unique structure of algebraic variety on $C_{Z}$ such that 
		\[G^2\to C_{Z}, \ \ (g, h)\mapsto (P_-g, \ P_+h, \ l_h  \ \circ l_g^{-1} ),\]
		induces an isomorphism of algebraic varieties, $G^2/\mathsf{E}_{\mu}\cong C_{Z}$.  
	\end{lemma}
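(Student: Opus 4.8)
The plan is to produce the algebraic variety structure on $C_Z$ by transporting it along the surjection $G^2 \to C_Z$, and to verify that the fibers of this surjection are precisely the $\mathsf{E}_\mu$-orbits for the right-multiplication action $(g,h)\circ(p_-,p_+) = (gp_-,hp_+)$. First I would check that the stated formula $(g,h)\mapsto(P_-g,\,P_+h,\,l_h\circ l_g^{-1})$ really lands in $C_Z$: the right $P_-$-coset $P_-g$ and right $P_+$-coset $P_+h$ are obvious, and $l_h\circ l_g^{-1}\colon U_-\backslash P_-g \to U_+\backslash P_+h$ is an isomorphism of $M$-torsors because both $l_g$ and $l_h$ are (right multiplication by a fixed group element always gives an $M$-torsor isomorphism between the relevant quotients, once one fixes the standard base point $(P_-,P_+,\mathrm{id}_M)$ to trivialize). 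So the map is well defined.

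Next I would compute the fibers. Suppose $(g_1,h_1)$ and $(g_2,h_2)$ have the same image. Equality of the first two components forces $g_2 = p_- g_1$ and $h_2 = p_+ h_1$ for some $p_-\in P_-$, $p_+\in P_+$; write $p_- = u_- m_-$ and $p_+ = u_+ m_+$ in the Levi decompositions. Equality of the third component, $l_{h_2}\circ l_{g_2}^{-1} = l_{h_1}\circ l_{g_1}^{-1}$, unwinds — using that $l_{g_2} = l_{g_1}\circ(\text{right mult.\ by }p_-)$ on the level of $M$-torsors, and that right multiplication by $p_-$ on $U_-\backslash G$ descends to right multiplication by its Levi image $m_-$ — into the condition $m_- = m_+$. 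Thus the fiber through $(g_1,h_1)$ is exactly $\{(p_-g_1,p_+h_1): (p_-,p_+)\in\mathsf{E}_\mu\}$, i.e.\ the $\mathsf{E}_\mu$-orbit. Conversely, the same computation shows every point of the $\mathsf{E}_\mu$-orbit has the same image, so the map $G^2 \to C_Z$ is $\mathsf{E}_\mu$-invariant and separates $\mathsf{E}_\mu$-orbits.

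It follows that $G^2 \to C_Z$ factors through a bijection $G^2/\mathsf{E}_\mu \to C_Z$ of sets. Since $G^2/\mathsf{E}_\mu$ is already known (as recalled in \S\ref{S:Rep}, via \cite[Proposition 7.15, Theorem 7.18]{MilneAlgebraicGroups}) to be an algebraic variety over $k$ with $G^2 \to G^2/\mathsf{E}_\mu$ a smooth surjection, I would simply declare the variety structure on $C_Z$ to be the one making this bijection an isomorphism; uniqueness is automatic once one demands that $G^2 \to C_Z$ be a morphism, because a smooth surjection is an epimorphism of schemes, so any two variety structures making it a morphism coincide.

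The main obstacle — really the only nonformal point — is the bookkeeping in the fiber computation: one must be careful about how right multiplication by $p_\pm$ on $G$ interacts with passing to the quotients $U_\mp\backslash G$ and with the chosen trivializations identifying these $M$-torsors, so that "the $M$-torsor isomorphisms agree" translates cleanly into "the Levi components agree." Once the normalization of the $\mathsf{E}_\mu$-action (shared Levi components) is matched against this condition, everything else is abstract nonsense about quotients by smooth affine groups.
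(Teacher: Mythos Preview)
The paper does not give its own proof of this lemma; it is simply quoted from \cite[Lemma~12.5]{PinkWedhornZiegler1} as a known fact, so there is nothing to compare your argument against in the paper itself. Your plan is the standard one and is correct in outline: check the map is well defined, compute that its fibers are exactly the $\mathsf{E}_\mu$-orbits, and transport the existing variety structure from $G^2/\mathsf{E}_\mu$ along the resulting bijection, with uniqueness forced by the fact that the smooth surjection $G^2\to G^2/\mathsf{E}_\mu$ is an epimorphism of schemes.

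There is one genuine bookkeeping slip you should fix. You declare the action to be right multiplication, $(g,h)\circ(p_-,p_+)=(gp_-,hp_+)$, but the map $(g,h)\mapsto(P_-g,P_+h,\dots)$ as written in the lemma is invariant for the action by \emph{left} multiplication $(g,h)\mapsto(p_-^{-1}g,\,p_+^{-1}h)$ --- that is, it descends to $\mathsf{E}_\mu\backslash G^2$ in the paper's original normalization, not to $G^2/\mathsf{E}_\mu$ under right multiplication. Your fiber computation silently switches to this: from $P_-g_1=P_-g_2$ you deduce $g_2=p_-g_1$, which is the left-multiplication relation, not $g_2=g_1p_-$. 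The paper's own text in \S\ref{S:Lusztig} passes between $\mathsf{E}_\mu\backslash G^2$ and $G^2/\mathsf{E}_\mu$ via the inversion isomorphism $\iota:(g,h)\mapsto(g^{-1},h^{-1})$, and the lemma as stated really matches the $\mathsf{E}_\mu\backslash G^2$ side. So either keep the map as given and work with the left-multiplication action throughout, or precompose with $\iota$ and rewrite the map as $(g,h)\mapsto(P_-g^{-1},P_+h^{-1},\dots)$ if you insist on the right-multiplication quotient. Once you make this consistent, your Levi-component calculation goes through exactly as you describe.
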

	
	We now present the corresponding Lusztig variety, for which we need to fix some group theoretic data. Let $(B, T)$ be a 
    Borel pair of $G$. Let $W$ be the Weyl group of $G$ with respect to $T$ and $I\subseteq W$ the set of simple reflections determined by $B$. Let $J, K\subseteq I$ be the type of parabolic subgroups $P_-, P_+$ respectively. Since $P_-$ is the opposite parabolic of $P_+$, we have $K={}^{w_0}J:=w_0Jw_0^{-1}=w_0Jw_0$. 
	For a subset $L\subseteq I$, we denote by $W_{L}\subseteq W$ the subgroup of $W$ generated by elements in $L$, and ${}^LW\subseteq W$ (resp. $W^{L}\subseteq W$ ) the set of minimal representatives of the coset space $W_L\backslash W$ (resp. $W/W_L$). 
	Then ${}^KW^J:={}^KW\cap W^J$ forms the set of minimal representatives of the double coset space $W_K\backslash W/ W_J$. Let $x\in {}^KW^J$ be the unique element of maximal length in ${}^KW^J$ (it is also the unique element of minimal length in the double coset $W_Kw_0W_J$, with $w_0\in W$ the unique element of maximal length), then we have 
	\[x=w_{0, K}w_0=w_0w_{0,J},\]
	where $w_{0, K}\in W_K, w_{0, J}\in W_J$ are the unique elements of maximal length.  We then also have the relation that $K={}^xJ$. We refer to \cite[\S 2.2]{PinkWedhornZiegler1} and the reference therein for more detailed discussion on the combinatorial data involved here. 
	
	The Lusztig variety $Z_{J, \mathrm{id}, x}$ is the algebraic $k$-variety parametrizing triples $(P, Q, [k])$ consisting of parabolic subgroups  $P, Q$ of $G$ of types $J,K$ respectively, and a coset $[g]\in U_-\backslash G / U_+ $, such that 
	\[\mathrm{relpos}(Q, {}^gP)=x. \]
	Here $\mathrm{relpos}(Q, P^g)$ denotes the relative position of $Q$ and $P^g:=g^{-1}Pg$; see for example \cite[\S 2]{Moonen&WedhornDiscreteinvariants} for more details. A standard base point of $Z_{J, \mathrm{id}, x}$ is given by the triple $(P_-, P_+, [1])$ by Lemma 2.9 of loc. cit.
	
	\begin{proposition}[{\cite[12.13]{PinkWedhornZiegler1}}]	
		The following is an isomorphism of $k$-varieties, 
		\[C_{Z}\cong Z_{J, \mathrm{id}, x},  \ \ \ (P_-g, P_+h, l_h\circ l_g^{-1} )\mapsto ({}^gP_-, {}^hP_+, [hg^{-1}])\]
	\end{proposition}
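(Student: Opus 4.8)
The plan is to reproduce the proof of \cite[12.13]{PinkWedhornZiegler1} in the present normalization, the key point being that both sides are homogeneous under $G^2$ and the displayed map is an orbit map. First I would equip $Z_{J,\mathrm{id},x}$ with the $G^2$-action $(a,b)\cdot(P,Q,[k])=({}^aP,{}^bQ,[bka^{-1}])$; this is well defined because the relative position of a pair of parabolics is invariant under simultaneous conjugation, so the condition $\mathrm{relpos}(Q,{}^kP)=x$ is preserved. The triple $z_0:=(P_-,P_+,[1])$ lies in $Z_{J,\mathrm{id},x}$: it has the prescribed types, and since $P_-$ is opposite to $P_+$ their relative position is the generic double coset $W_Kw_0W_J$, whose minimal-length representative is exactly $x$ --- this is the place where the combinatorial normalization $x=w_{0,K}w_0=w_0w_{0,J}$ and $K={}^xJ$ is used. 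With these conventions the map in the statement is precisely the orbit map $\psi\colon G^2\to Z_{J,\mathrm{id},x}$, $(g,h)\mapsto(g,h)\cdot z_0=({}^gP_-,{}^hP_+,[hg^{-1}])$, and it is $G^2$-equivariant for left translation on the source. Since $C_Z\cong G^2/\mathsf{E}_{\mu}$ by \cite[Lemma 12.5]{PinkWedhornZiegler1}, it suffices to show that $\psi$ realizes $Z_{J,\mathrm{id},x}$ as the quotient $G^2/\mathsf{E}_{\mu}$; matching $z_0$ with the base point $(P_-,P_+,\mathrm{id}_M)$ of $C_Z$ then yields the asserted formula together with its inverse.

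For this I would verify three things. (i) The stabilizer of $z_0$ contains $\mathsf{E}_{\mu}$, equivalently $\psi$ is invariant for the right $\mathsf{E}_{\mu}$-action on $G^2$; this is a direct computation in the conventions of \cite{PinkWedhornZiegler1}, whose essential content is that the common-Levi-component condition defining $\mathsf{E}_{\mu}$ is precisely what leaves the third coordinate unchanged when $(g,h)$ is translated by $\mathsf{E}_{\mu}$. (ii) Conversely $\mathrm{Stab}_{G^2}(z_0)=\mathsf{E}_{\mu}$ as closed subgroup schemes: an element $(a,b)$ fixing $z_0$ satisfies $a\in N_G(P_-)=P_-$ and $b\in N_G(P_+)=P_+$ together with a coset condition which, by uniqueness in the big-cell (Bruhat) decomposition, forces the Levi components of $a$ and $b$ to coincide; since $\mathsf{E}_{\mu}$ is smooth, it follows that this stabilizer is smooth (so that $\psi$ is separable) and that $\dim Z_{J,\mathrm{id},x}=\dim G^2-\dim\mathsf{E}_{\mu}=\dim G$ (using $\dim\mathsf{E}_{\mu}=\dim G$ from Theorem~\ref{Thm:keyRep}). (iii) $\psi$ is surjective, i.e. $G^2$ acts transitively on $Z_{J,\mathrm{id},x}$: given $(P,Q,[k])$, conjugate $Q$ onto $P_+$ (parabolics of type $K$ are all conjugate); the hypothesis then reads $\mathrm{relpos}(P_+,P')=x$ for the transformed type-$J$ parabolic $P'$, and since the type-$J$ parabolics in relative position $x$ with $P_+$ form a single $P_+$-orbit, one conjugates $P'$ onto $P_-$; it remains to normalize the third coordinate, and the residual freedom (left translation of the two conjugating elements by $P_-$ and by $P_+$) together with maximality of $x$ places it in the open double coset $U_-U_+$, so it can be reduced to $[1]$.

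Granting (i)--(iii), $\psi$ is a transitive separable orbit map with scheme-theoretic stabilizer $\mathsf{E}_{\mu}$, so the induced morphism $C_Z=G^2/\mathsf{E}_{\mu}\to Z_{J,\mathrm{id},x}$ is an isomorphism of $k$-varieties, equal by construction to $(P_-g,P_+h,l_h\circ l_g^{-1})\mapsto({}^gP_-,{}^hP_+,[hg^{-1}])$. I expect step (iii) to be the main obstacle: the transitivity of the $G^2$-action, and specifically the bookkeeping of relative positions needed to reduce the $U_-\backslash G/U_+$-coordinate to $[1]$, is exactly where one must exploit that $x$ is the minimal-length representative of $W_Kw_0W_J$ (equivalently the maximal-length element of ${}^KW^J$, with $K={}^xJ$) and must check that this particular $x$, rather than a smaller Bruhat stratum, is the one matching $C_Z$. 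An alternative that bypasses the transitivity argument is to construct the inverse $Z_{J,\mathrm{id},x}\to C_Z$ by hand: Zariski-locally on $Z_{J,\mathrm{id},x}$, choose algebraic sections of $G\to G/P_-$, of $G\to G/P_+$, and (over the relevant open locus) of $G\to U_-\backslash G/U_+$, obtain $(g,h)$ from $(P,Q,[k])$ well defined up to $\mathsf{E}_{\mu}$, and check that the two maps are mutually inverse.
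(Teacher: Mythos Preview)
The paper does not prove this proposition at all; it merely records it as a citation of \cite[12.13]{PinkWedhornZiegler1} and moves on to summarize the chain of isomorphisms $\prescript{}{1}{\mathcal{C}_1^\mu}\cong\mathsf{E}_{\mu}\backslash G^2\cong G^2/\mathsf{E}_{\mu}\cong C_Z\cong Z_{J,\mathrm{id},x}$. So there is nothing to compare against beyond the statement itself: your proposal supplies an argument where the paper supplies none.

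That said, your outline is essentially the standard one from the cited reference and is sound. Two small remarks. First, your justification for the $G^2$-action preserving the defining condition is correct but slightly misphrased: the point is not that $P$ and $Q$ are simultaneously conjugated, but that the pair $(Q,{}^kP)$ is---after acting by $(a,b)$ one has $({}^bQ,{}^{bk}P)$, a simultaneous conjugate by $b$. Second, be careful with the meaning of the double coset $[k]$: in Lusztig's definition (and in \cite{PinkWedhornZiegler1}) the quotient is by the unipotent radicals of the \emph{varying} parabolics $P,Q$, not by the fixed $U_-,U_+$ as the paper's shorthand suggests; this is what makes the $G^2$-action and the stabilizer computation go through cleanly. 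With that understood, your steps (i)--(iii) are the right ones, and your identification of (iii) as the only nontrivial point---resting on the combinatorial characterization of $x$ as the longest element of ${}^KW^J$, i.e.\ the open relative position---is accurate.
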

	
	To summarize, we have a sequence of isomorphisms between algebraic $k$-varieties, 
	\[\prescript{}{1}{\mathcal{C}_1^\mu} \xleftarrow[\psi]{\cong}\mathsf{E}_{\mu}\backslash G^2\cong G^2/\mathsf{E}_{\mu}\cong C_{Z}\cong Z_{J, \mathrm{id}, x}.\]
	
	Note that the last three isomorphisms are essentially formal. These isomorphisms provide different incarnations of our fpqc sheaf $\prescript{}{1}{\mathcal{C}_1^\mu}$. In particular, $\prescript{}{1}{\mathcal{C}_1^\mu}$ is also represented by the Lusztig variety $Z_{J, \mathrm{id}, x}$.

	\subsection{Perfect representability of   $\prescript{}{1}{\mathrm{C}_1^\mu}$.}\label{S:MixRep}
	In this subsection, we let $k$ be a perfect field of characteristic $p$. Consider the pair $(\mathcal{G}, \mu)$ consisting of a reductive group $\mathcal{G}$ over $W(k)$, and a cocharacter $ \mu: \mathbb{G}_{m, W(k)}\to \mathcal{G}$ defined over $W(k)$. Write $G=\mathcal{G}_k$ and still use
	$\mu$ to denote its base change to $k$.
	\begin{definition} Let $\mathrm{Alg}_k$ and $\mathrm{Perf}_k$ denote the category of  $k$-algebras and perfect $k$-algebras respectively. 
		We say a presheaf $\mathcal{F}$ on $\mathrm{Perf}_k$  is \emph{perfectly representable} if there exists $k$-scheme $X$ such that its restriction on $\mathrm{Perf}_k$ (viewed as a presheaf on $\mathrm{Alg}_k$) is isomorphic to $\mathcal{F}$. 
	\end{definition}

	Denote by $K:=L^+\mathcal{G}$ the mixed-characteristic positive loop group of $\mathcal{G}$, which is a perfect group scheme over $k$ (meaning, a group functor over $\mathrm{Perf}_k$) given by associating a perfect $k$-algebra $R$ with the group $K(R):=\mathcal{G}(W(R))$. Similarly, we can define the mixed-characteristic loop group $L\mathcal{G}, R\mapsto \mathcal{G}(W(R)\frac{1}{p})$ and the corresponding $K_1$ given by the kernel of the natural reduction modulo $p$ map $K\to G$. 
	Consider the mixed characteristic version of $\prescript{}{1}{\mathcal{C}_1^\mu}$,
	\[\prescript{}{1}{\mathrm{C}_1^\mu}:=K_1\backslash K\mu(p)K/K_1.\]
	
	\begin{theorem}\label{Thm:eqvsmix}
		The sheaf $\prescript{}{1}{\mathrm{C}_1^\mu}$ on $\mathrm{Perf}_k$ is perfectly represented by $\prescript{}{1}{\mathcal{C}_1^\mu}$. In other words, we have,
		\[\prescript{}{1}{\mathrm{C}_1^\mu}=(\prescript{}{1}{\mathcal{C}_1^{\mu}) ^\mathrm{perf}}.\]
	\end{theorem}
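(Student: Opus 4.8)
The plan is to show that, just as in the equicharacteristic case treated in Theorem~\ref{Thm:keyRep}, the map $(g,h)\mapsto g^{-1}\mu(p)h$ descends to an isomorphism $\mathsf{E}_\mu\backslash G^2\cong{}\prescript{}{1}{\mathrm{C}_1^\mu}$ after perfecting, and that this is exactly the restriction to $\mathrm{Perf}_k$ of the scheme $\prescript{}{1}{\mathcal{C}_1^\mu}$ already constructed. First I would set up the mixed-characteristic analogues of the subgroups of \S\ref{S:UnipGps}: inside $K=L^+\mathcal{G}$ one has $\prescript{+}{}{\mathrm{H}}:=K\cap p^{\mu}{}^{-1}Kp^{\mu}$ and $\prescript{-}{}{\mathrm{H}}:=K\cap p^{\mu}Kp^{\mu}{}^{-1}$, together with $\mathrm{H}^\pm$ the preimages of $P_\pm$ under the reduction $K\to G$. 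The key point is that Lemmata~\ref{Lem:IntegLem} and~\ref{Lem:keyInclu} have verbatim mixed-characteristic analogues, with $k[[t]]$ replaced by $W(R)$, $t$ by $p$, and the condition ``$\mu(x)g\mu(x)^{-1}$ extends over $x=0$'' read off over $W(R)$ via the cocharacter $\mu$ defined over $W(k)$. The reduction to $\mathrm{GL}_N$ goes through because $\mathcal{G}\hookrightarrow\mathrm{GL}_{N,W(k)}$ is a closed immersion over $W(k)$ and the Weil restriction / Witt-vector functor $R\mapsto W(R)$ preserves closed immersions; the explicit block-matrix computation $p^{-\mu}gp^{\mu}=(p^{d_j-d_i}g_{ij})$ is literally the same, with $t$ replaced by $p$. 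Granting those analogues, part (1) of Theorem~\ref{Thm:keyRep} (the $\mathsf{E}_\mu$-invariance, using $X,Y\in K_1$) and part (2) (injectivity on the level of the presheaf quotient, using that the correcting elements reduce into $\mathsf{E}_\mu$) transfer without change, giving an isomorphism of presheaves on $\mathrm{Perf}_k$
\[
\psi:\ \mathsf{E}_\mu\backslash G^2\big|_{\mathrm{Perf}_k}\ \xrightarrow{\ \cong\ }\ \prescript{}{1}{\mathrm{C}_1^\mu}.
\]

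Next I would reconcile this with the statement that $\prescript{}{1}{\mathrm{C}_1^\mu}=(\prescript{}{1}{\mathcal{C}_1^\mu})^{\mathrm{perf}}$. Since $\prescript{}{1}{\mathcal{C}_1^\mu}\cong\mathsf{E}_\mu\backslash G^2$ as $k$-schemes by Theorem~\ref{Thm:keyRep}, and the fpqc quotient $\mathsf{E}_\mu\backslash G^2$ is a $k$-scheme of finite type whose formation commutes with arbitrary base change and, in particular, restricts correctly to $\mathrm{Perf}_k$, one has a canonical identification of presheaves on $\mathrm{Perf}_k$
\[
(\prescript{}{1}{\mathcal{C}_1^\mu})^{\mathrm{perf}}\ \cong\ (\mathsf{E}_\mu\backslash G^2)^{\mathrm{perf}}\ \cong\ \mathsf{E}_\mu\backslash G^2\big|_{\mathrm{Perf}_k},
\]
the last because the value of a scheme on a perfect ring already only depends on the underlying set-valued functor. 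Composing with $\psi$ gives the desired canonical isomorphism. I would also note in passing that the perfection of $\mathsf{E}_\mu\backslash G^2$ is represented by the perfection of the scheme $\mathsf{E}_\mu\backslash G^2$ in the sense of Zhu/Bhatt--Scholze, which is how one phrases ``perfectly representable'' cleanly.

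The main obstacle I anticipate is purely a matter of bookkeeping in the mixed-characteristic Lemma~\ref{Lem:IntegLem} analogue: the clean argument in the equicharacteristic case uses that $F_{\mu,g}\colon\mathbb{G}_{a,A}\to G_A$ can be evaluated at $t\in\mathbb{G}_a(A)\cap\mathbb{G}_m(A[\tfrac1t])$, and one needs the honest analogue that $p\in W(R)$ is simultaneously a nonzerodivisor lying in $W(R)$ and invertible in $W(R)[\tfrac1p]$, so that $p^{-\mu}gp^{\mu}$, a priori defined over $W(R)[\tfrac1p]$, actually has coordinates in $W(R)$ whenever $g\in P_\pm(W(R))$ — this is where one uses the dynamical characterization of $P_\pm$ over the base ring $W(k)$ (equivalently, the $\mathbb{Z}$-grading on $\mathrm{Lie}\,\mathcal{G}$ induced by $\mu$) and the flatness of $\mathcal{G}$ over $W(k)$. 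Once this single extension/integrality statement is in place over $W(R)$, everything else, including the claim that the correcting factors $X,Y$ land in $K_1$ and that the ambiguity element reduces into $\mathsf{E}_\mu\subseteq P_-\times P_+$, is formally identical to \S\ref{S:RepCIImu}, and the proof concludes as in Theorem~\ref{Thm:keyRep}.
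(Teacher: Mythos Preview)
Your proposal is correct and follows essentially the same approach as the paper: establish mixed-characteristic analogues of Lemmata~\ref{Lem:IntegLem} and~\ref{Lem:keyInclu} (via the same reduction to $\mathrm{GL}_N$ and block-matrix computation with $t$ replaced by $p$), then rerun the proof of Theorem~\ref{Thm:keyRep} to get $(\mathsf{E}_\mu\backslash G^2)|_{\mathrm{Perf}_k}\cong\prescript{}{1}{\mathrm{C}_1^\mu}$. The paper adds only two small remarks you glossed over: first, one must invoke smoothness of $\mathcal{G}$ together with $p$-adic completeness of $W(R)$ to lift $g,h\in G(R)$ to $\tilde g,\tilde h\in\mathcal{G}(W(R))$, so that the expression $\tilde g^{-1}p^{\mu}\tilde h$ is meaningful and the map $(G^2)^{\mathrm{perf}}\to\prescript{}{1}{\mathrm{C}_1^\mu}$ is well-defined and surjective (the set of lifts of $g$ being exactly the coset $K_1(R)\tilde g$); second, in the mixed setting the semidirect decompositions $\mathcal{H}^\pm=P_\pm\ltimes\mathcal{K}_1$ used around~\eqref{Eq:semprod} no longer hold (there is no group-theoretic section of $K\to G$) and must be replaced by the corresponding short exact sequences, which is harmless for the argument.
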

	
	\begin{proof}
		First note that since $W(R)$ is $p$-adically complete, the smoothness of $\mathcal{G}$  implies that for each perfect $k$-algebra $R$, every point $g\in G(R)$ admits a lift  $\tilde{g}\in \mathcal{G}(W(R))$ and all the lifts of $g$ form the coset $K_1(R)g=gK_1(R)$.  
		Consider the following map of presheaves on $\mathrm{Perf}_k$,
		\begin{equation*}
			(G^2)^\mathrm{Perf} \to \prescript{}{1}{\mathrm{C}_1^{\mu}}, \ \ \ (g, h)\longmapsto K_1\tilde{g}^{-1}\mu(p)\tilde{h} K_1. 
		\end{equation*}
		This map is clearly surjective and we need to show the injectivity. For this, one verifies easily that mixed-characteristic versions of Lemma \ref{Lem:IntegLem} and Lemma \ref{Lem:keyInclu} still hold: the proof works verbatim, with necessary but harmless modification; c.f., \cite[Lemma 3.4.1]{Yan24}. For example, in the mixed setting, the semidirect products in \eqref{Eq:semprod} can be replaced by corresponding short exact sequences. 
	\end{proof}
	
	\begin{remark}\label{Rmk:EqvsMix} Let us assume that $\mathcal{G}$ is defined over $\mathbb{Z}_p$. Choose a Borel pair $(\mathcal{B}, \mathcal{T})$ of $\mathcal{G}$, both defined over $\mathbb{Z}_p$ (this is possible). Assume further that $k$ is algebraically closed and write $(B, T)=(\mathcal{B}, \mathcal{T})\otimes k$. Then we have the identification \[W_{\mathcal{G}}=\mathrm{N}_{\mathcal{G}}(\mathcal{T})(W(k))/\mathcal{T}(W(k))=\mathrm{N}_{G}(T)(k)/T(k)=W_G\] of Weyl groups and the identification $J(\mu)=J(\mu_k)$ of types for $\mu$ and $\mu_k$ respectively,  and hence the identification ${}^{J(\mu)}W_{\mathcal{G}}={}^{J(\mu_k)}W_G$, which we abbreviate as ${}^JW$, of minimal length representatives of $W_\mu\backslash W$ (c.f., \S \ref{S:Lusztig}).
		
		Recall that in \cite{ViehmannTruncation1}, Viehmann proves the following two homeomorphisms 
		\begin{equation}\label{Vieheq}
			\sigma^{-1}({}^JW)\cong \prescript{}{1}{\mathcal{C}_1^\mu}(k)/\mathrm{Ad}_\sigma \mathcal{K}(k)=\prescript{}{1}{\mathcal{C}_1^\mu}(k)/\mathrm{Ad}_\sigma G(k), \quad  w\mapsto w w_0 w_{0, \mu}\mu(t),
		\end{equation} and 
		\begin{equation}\label{Viehmix}
			\sigma^{-1}({}^JW)\cong \prescript{}{1}{\mathrm{C}_1^\mu}(k)/\mathrm{Ad}_\sigma K(k)=\prescript{}{1}{\mathrm{C}_1^\mu}(k)/\mathrm{Ad}_\sigma G(k), \quad w\mapsto \widetilde{w w_0 w_{0, \mu}}\mu(p),
		\end{equation} simultaneously by running parallel arguments. Here we abuse notation and use the same notation for any representative in $\mathrm{N}_{G}(T)(k)$ of $ww_0w_{0, \mu}$  and use $\widetilde{w w_0 w_{0, \mu}}$ for any of its representative in $\mathrm{N}_{\mathcal{G}}(\mathcal{T})(W(k))$, where $w_0\in W, w_{0, \mu}\in W_\mu $ are the longest elements.  The set of minimal length representatives ${}^JW$ is equipped with the topology  \footnote{See for example \S 2A of \cite{PinkWedhornZiegler2} for a discussion on the topology of a finite $T_0$ space.}  induced by the partial order $\preccurlyeq$ defined as in Definition 6.1 of \cite{PinkWedhornZiegler1},  which generalizes the partial order defined by X. He in \cite{HeGstabwond}. Precisely, a subset $U\subseteq {}^JW$ is open if for any $w \in {}^JW$ and any $v\in W$, the condition $w\preccurlyeq v$ implies $v\in U$. This topology of ${}^JW$ induces that of $\sigma^{-1}({}^JW$) on the left hand sides of \eqref{Vieheq} and \eqref{Viehmix}. 
        
		The common ${}^JW$ in the two homeomorphisms above simply look like a nice coincidence, as the RHS of \eqref{Vieheq} and \eqref{Viehmix} were not directly related.  Now with Theorem \ref{Thm:eqvsmix}, this can be explained as:
		\emph{the right hand sides of \eqref{Vieheq} and of \eqref{Viehmix} are already identified  before taking $G(k)$-$\sigma$ conjugacy classes}.
	\end{remark}

    \subsection{An Open Problem} \label{S:Conjecture}

Suppose now that $G$ is split over $k$ (without constraint on the characteristic of $k$) and that we have chosen a Borel pair $(B, T)$ with a maximal torus $T$ that is split. We then have the Cartan decomposition of $\mathcal{L}G$ into Cartan cells $\mathcal{C}^\mu$, which are locally closed subschemes:
\[
\mathcal{L}G = \bigsqcup_{\mu \in X_*(T)_{\mathrm{dom}}}\mathcal{C}^\mu.
\]
The closure of $\mathcal{C}^\mu$, denoted by $\mathcal{C}^{\leq \mu}$, is (set-theoretically) equal to the union of $\mathcal{C}^{\mu'}$ with $\mu' \leq \mu$. Here, $\mu' \leq \mu$ if $\mu - \mu'$ is equal to a nonnegative integral linear combination of positive coroots. Given the representability of $\prescript{}{1}{\mathcal{C}_1^\mu}$, a natural question arises as to whether the fpqc $k$-sheaf,
\[
\prescript{}{1}{\mathcal{C}_1^{\leq \mu}} := \mathcal{K}_1 \backslash \mathcal{C}^{\leq \mu} / \mathcal{K}_1,
\]
is also represented by a $k$-scheme.

\begin{conjecture}
	The fpqc sheaf $\prescript{}{1}{\mathcal{C}_1^{\leq \mu}}$ is represented by a normal $k$-scheme. 
\end{conjecture}

This conjecture is true for minuscule $\mu$, as in that case we have $\prescript{}{1}{\mathcal{C}_1^\mu} = \prescript{}{1}{\mathcal{C}_1^{\leq \mu}}$. We currently have no insight on how to prove even a single non-trivial case. The validity of this conjecture would imply that the larger space $\mathcal{K}_1 \backslash \mathcal{L}G / \mathcal{K}_1$ is an ind-scheme. 

\section{More on the Variety \( \prescript{}{1}{\mathcal{C}_1^\mu} \)}
\label{S:Properties} This section collects various results related to the variety \( \prescript{}{1}{\mathcal{C}_1^\mu} \). Some are consequences or corollaries of the previous section; others, while not needed later, are included for completeness. Our presentation of Proposition~\ref{Prop:qaffine} is deliberately expository.

	\subsection{The algebraicity of quotient stack $[\prescript{}{1}{\mathcal{C}_1^\mu}/\mathrm{Ad}_{\tau}G]$}\label{S:QuotofCIImu}
    Let $\tau: \mathcal{L}^+ G \to \mathcal{L}^+ G$ be an endomorphism of $k$-group schemes. Consider the $\mathcal{L}^+ G$-$\tau$ conjugation action on $\prescript{}{1}{\mathcal{C}_1^\mu}$ (viewed as an fpqc sheaf)
	\begin{equation*}
		h\cdot g:=g^{-1}h\tau(g), \ \  g\in \mathcal{L}^+ G,\ \  h\in \prescript{}{1}{\mathcal{C}_1^\mu}.  
	\end{equation*}
	Suppose that we have $\tau(\mathcal{K}_1)\subseteq \mathcal{K}_1$ so that the $\mathcal{L}^+ G$-action factors through the quotient $\mathrm{pr}:\mathcal{L}^+ G\to G$; this is the case if $\tau$ is induced by some $k$-endomorphism of $G$. We denote the resulting quotient $k$-stack by
	\[ [\prescript{}{1}{\mathcal{C}_1^\mu}/\mathrm{Ad}_{\tau}G]. \]
	Note that one can form such a quotient stack for the sheaf $\prescript{}{1}{\mathcal{C}_1^\mu}$ (due to our convention concerning stacks in \S \ref{NotatConven}, here we view $\prescript{}{1}{\mathcal{C}_1^\mu}$ as a sheaf over the site $(\mathrm{Aff}/k)_{\mathrm{et}}$), without knowing whether it is representable or not.  An immediate consequence of  Theorem \ref{Thm:keyRep} is that the stack $[\prescript{}{1}{\mathcal{C}_1^\mu}/\mathrm{Ad}_{\tau}G]$ is an \emph{  algebraic} stack. In fact, more is true:
	
	\begin{corollary}\label{Cor:ArtStack}
		The $k$-stack $[\prescript{}{1}{\mathcal{C}_1^\mu}/\mathrm{Ad}_{\tau} G]$ is a smooth Artin $k$-stack of dimension $0$.
	\end{corollary}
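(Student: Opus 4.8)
The plan is to deduce everything from the explicit presentation $\prescript{}{1}{\mathcal{C}_1^\mu}\cong \mathsf{E}_{\mu}\backslash G^2$ of Theorem~\ref{Thm:keyRep} together with standard facts about quotient stacks of algebraic groups acting on schemes of finite type. First I would record that, since $\tau(\mathcal{K}_1)\subseteq\mathcal{K}_1$, the $\mathcal{L}^+G$-$\tau$-conjugation action on the sheaf $\prescript{}{1}{\mathcal{C}_1^\mu}$ factors through $\mathrm{pr}:\mathcal{L}^+G\to G$, so $[\prescript{}{1}{\mathcal{C}_1^\mu}/\mathrm{Ad}_\tau G]$ really is the quotient of a finite-type $k$-scheme (by Theorem~\ref{Thm:keyRep}, $\prescript{}{1}{\mathcal{C}_1^\mu}$ is a smooth separated geometrically connected $k$-scheme of dimension $\dim G$) by the smooth affine algebraic group $G$. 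By the general representability of such quotients (e.g.\ \cite[Theorem~7.18]{MilneAlgebraicGroups}, or \cite[04UV, 06FI]{stacks-project}), $[\prescript{}{1}{\mathcal{C}_1^\mu}/\mathrm{Ad}_\tau G]$ is an algebraic (Artin) stack of finite type over $k$.

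Next I would address smoothness. The presentation morphism $\prescript{}{1}{\mathcal{C}_1^\mu}\to[\prescript{}{1}{\mathcal{C}_1^\mu}/\mathrm{Ad}_\tau G]$ is smooth and surjective because it is a $G$-torsor and $G$ is smooth over $k$; since the source $\prescript{}{1}{\mathcal{C}_1^\mu}$ is a smooth $k$-scheme, smoothness of the stack follows by descent along this atlas (\cite[075U]{stacks-project}). For the dimension, I would compute
\[
\dim [\prescript{}{1}{\mathcal{C}_1^\mu}/\mathrm{Ad}_\tau G] \;=\; \dim \prescript{}{1}{\mathcal{C}_1^\mu} - \dim G \;=\; \dim G - \dim G \;=\; 0,
\]
using $\dim\prescript{}{1}{\mathcal{C}_1^\mu}=\dim G$ from Theorem~\ref{Thm:keyRep}(2); here the relative dimension of the atlas is $\dim G$ because the action has $0$-dimensional stabilizers generically, or more robustly because for a smooth group $G$ acting on a smooth scheme $X$ one has $\dim[X/G]=\dim X-\dim G$ by definition of the dimension of a quotient stack.

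The only genuine point requiring a little care — and what I expect to be the main (though still minor) obstacle — is confirming that the $G$-action on $\prescript{}{1}{\mathcal{C}_1^\mu}$ used to form the quotient is an honest algebraic action of the algebraic group $G$ on the scheme $\prescript{}{1}{\mathcal{C}_1^\mu}$, i.e.\ that the abstractly-defined $\tau$-conjugation action $h\cdot g = g^{-1}h\tau(g)$ descends through $\mathrm{pr}$ to a morphism of $k$-schemes $\prescript{}{1}{\mathcal{C}_1^\mu}\times_k G\to\prescript{}{1}{\mathcal{C}_1^\mu}$ rather than merely a map of fpqc sheaves. This follows since $\prescript{}{1}{\mathcal{C}_1^\mu}$ is a scheme and both source and target are schemes of finite type, so the map of sheaves is automatically a scheme morphism; the action axioms are then inherited from the sheaf-level action. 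With that in hand, all the cited representability and smoothness results for quotient stacks apply directly, and the corollary follows. I would keep the write-up short, as essentially everything is formal once Theorem~\ref{Thm:keyRep} is invoked.
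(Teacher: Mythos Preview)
Your proposal is correct and follows essentially the same approach as the paper's own proof: invoke Theorem~\ref{Thm:keyRep} to know that $\prescript{}{1}{\mathcal{C}_1^\mu}$ is a smooth $k$-scheme of dimension $\dim G$, observe that the quotient by the smooth group $G$ is therefore a smooth Artin stack, and compute the dimension as $\dim\prescript{}{1}{\mathcal{C}_1^\mu}-\dim G=0$. Your write-up is simply more explicit (with Stacks Project citations and the extra sanity check that the sheaf-level action is an honest scheme morphism), and the aside about ``$0$-dimensional stabilizers generically'' is unnecessary and not obviously true here---but you immediately supply the correct justification via the definition of dimension for quotient stacks, so no harm is done.
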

	\begin{proof}
		It is an Artin stack because by Theorem \ref{Thm:keyRep}, the sheaf $\prescript{}{1}{\mathcal{C}_1^\mu}$ represented by an algebraic $k$-scheme and the canonical projection $\prescript{}{1}{\mathcal{C}_1^\mu}\to [\prescript{}{1}{\mathcal{C}_1^\mu}/\mathrm{Ad}_{\tau}G]$ is smooth (since $G$ is $k$-smooth). It is smooth over $k$ because $\prescript{}{1}{\mathcal{C}_1^\mu}$ is smooth over $k$. Its dimension is computed by: \[\dim[\prescript{}{1}{\mathcal{C}_1^\mu}/\mathrm{Ad}_{\tau} G]=\dim \prescript{}{1}{\mathcal{C}_1^\mu}-\dim G=0.\]
	\end{proof}

	\subsection{Some closed subschemes of $\prescript{}{1}{\mathcal{C}_1^\mu}$}We consider in this subsection embeddings of $G$ into $\mathcal{L} G$ induced by the cochracter $\mu: \mathbb{G}_{m, k}\to G$ in various ways. We will show that both $G/U_-$ and $U_+\backslash G$ embed into $\prescript{}{1}{\mathcal{C}_1^\mu}$ as closed subschemes. Such embeddings will reappear in \S \ref{S:ShtukAsZip} and \S \ref{S:App}.
	
	Recall that we have in \S \ref{S:RepCIImu} introduced the embedding $ \mathrm{Emb}_{\mu}:  G\hookrightarrow\mathcal{L}G, g\mapsto g\mu(t)$. Here we consider another embedding of $G$ into $\mathcal{L} G$,
	\[\mathrm{Emb}_{\mathrm{ad}(\mu)}: G\hookrightarrow \mathcal{L} G, \ \ \ g\mapsto \mu(t)^{-1}g\mu(t).\]
	Clearly this is in fact a \emph{homomorphism} of group functors. By Lemma \ref{Lem:IntegLem}, $\mathrm{Emb}_{\mathrm{ad}(\mu)}$ induces an embedding of group schemes \(U_-\to \mathcal{K}_1\). Dually we can consider the embeddings
	\[\prescript{}{\mu}{\mathrm{Emb}}: G\hookrightarrow \mathcal{L} G, \ g\mapsto \mu(t)g, \quad \mathrm{Emb}_{\mathrm{ad}(\mu^{-1})}:G\hookrightarrow \mathcal{L} G, g\mapsto \mu(t)g\mu(t)^{-1}.\]
	Again by Lemma \ref{Lem:IntegLem}, $\mathrm{Emb}_{\mathrm{ad}(\mu^{-1})}$ induces an embedding $U_+\hookrightarrow \mathcal{K}_1$. For our later applications we are interested in the relation between the quotient scheme $G/U_-$ resp. $G/U_+$ of $G$ and the quotient scheme $\mathcal{C}_1^{\mu}$ resp. $\prescript{}{1}{\mathcal{C}^\mu}:=\mathcal{K}_1\backslash \mathcal{C}^{\mu}$ of $\mathcal{C}^{\mu}$.  We denote by 
	\[ G\mu(t)\mathcal{K}_1/\mathcal{K}_1\subseteq \mathcal{C}_1^{\mu}, \ \ \ \text{resp.} \ \ \mathcal{K}_1\backslash \mathcal{K}_1\mu(t)G\subseteq \prescript{}{1}{\mathcal{C}^\mu}\] 
	the fpqc sheafification of the obvious presheaves under consideration.
	Clearly $G\mu(t)\mathcal{K}_1/\mathcal{K}_1$ is the $ G $-orbit of (the image of) $ \mu(t) $ inside $\mathcal{C}_1^{\mu}$; here $ G $ acts  on $ \mathcal{C}_1^{\mu} $  by left multiplication. Hence it follows from the general theory of smooth algebraic group action on schemes that $ G\mu(t)\mathcal{K}_1/\mathcal{K}_1$ is represented by a scheme, smooth separated of finite type over $ k $, and the embedding $G\mu(t)\mathcal{K}_1/\mathcal{K}_1 \hookrightarrow \mathcal{C}_1^{\mu}$ is an immersion. An analogous discussion holds for $\mathcal{K}_1\backslash \mathcal{K}_1\mu(t)G$ and $\prescript{}{1}{\mathcal{C}^\mu}$.
	\begin{corollary} \label{Cor:Embed} The following holds:
		\begin{enumerate}
			\item The embedding $\mathrm{Emb}_{\mu}: G\to \mathcal{C}^{\mu}  $ induces an immersion of  $k$-schemes, 
			\begin{align*}
				\tilde{\alpha}: G/U_-\cong G\mu(t)\mathcal{K}_1/\mathcal{K}_1 \subseteq \mathcal{C}^{\mu}_1.
			\end{align*} 
			\item 	The composition $ \alpha: G/U_{-}\to \mathcal{C}_1^{\mu} \xrightarrow{\mathrm{pr}} \prescript{}{1}{\mathcal{C}_1^\mu}$ is a closed immersion of $k$-schemes. Hence the canonical projection $G\mu(t)\mathcal{K}_1/\mathcal{K}_1 \to \prescript{}{1}{\mathcal{C}_1^\mu}$ is a closed immersion. 
			
			\item The embedding $\prescript{}{\mu}{\mathrm{Emb}}: G\to \mathcal{C}^\mu $ induces an immersion of $k$-schemes, 
			\begin{align*}
				\tilde{\beta}:	U_+\backslash G \cong \mathcal{K}_1\backslash\mathcal{K}_1 \mu(t)G\subseteq  \prescript{}{1}{\mathcal{C}^\mu}.
			\end{align*} 
			
			\item 	The composition $ \beta: U_+\backslash G\to \prescript{}{1}{\mathcal{C}^\mu} \xrightarrow{\mathrm{pr}} \prescript{}{1}{\mathcal{C}_1^\mu}$ is a closed immersion of $k$-schemes. In particular, the canonical projection $\mathcal{K}_1\backslash\mathcal{K}_1 \mu(t)G \to \prescript{}{1}{\mathcal{C}_1^\mu}$ is a closed immersion.

		\end{enumerate}
	\end{corollary}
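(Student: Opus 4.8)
The plan is to first handle assertion (1), then deduce (2) from it together with Theorem~\ref{Thm:keyRep}, and finally observe that (3) and (4) follow by the symmetric argument (replacing $\mu$ by $\mu^{-1}$ and swapping the roles of the two factors of $G^2$). For (1), I would start from the embedding $\mathrm{Emb}_\mu\colon G\hookrightarrow\mathcal C^\mu$, $g\mapsto g\mu(t)$, and compose with the projection $\mathcal C^\mu\to\mathcal C^\mu_1=\mathcal C^\mu/\mathcal K_1$. The key point is to identify the stabiliser: for $g\in G(A)$, $g\mu(t)\mathcal K_1=\mu(t)\mathcal K_1$ in $\mathcal C^\mu_1(A)$ iff $\mu(t)^{-1}g\mu(t)\in\mathcal K_1(A)$, and by Lemma~\ref{Lem:IntegLem} (the inclusion $\mu(t)^{-1}\mathcal L^+U_-\,\mu(t)\subseteq\mathcal K_1$) together with the computation in the proof of Lemma~\ref{Lem:keyInclu} (the Bruhat-type decomposition of $G$ and the root-group analysis), this happens exactly when $g\in U_-(A)$. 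Hence the orbit map factors through a monomorphism $G/U_-\to\mathcal C^\mu_1$; since $G\mu(t)\mathcal K_1/\mathcal K_1$ is the $G$-orbit of $\mu(t)$ and is therefore represented by a smooth scheme of finite type over $k$ on which $G$ acts transitively with stabiliser $U_-$, the orbit-stabiliser formalism for smooth algebraic group actions (e.g.\ \cite[Prop.~7.15, Thm.~7.18]{MilneAlgebraicGroups}) upgrades this to an isomorphism $G/U_-\cong G\mu(t)\mathcal K_1/\mathcal K_1$, and this last scheme is a locally closed subscheme of $\mathcal C^\mu_1$, giving the immersion $\tilde\alpha$.

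For (2), the cleanest route is to transport the question through the isomorphism $\psi\colon\mathsf E_\mu\backslash G^2\xrightarrow{\cong}\prescript{}{1}{\mathcal C_1^\mu}$ of Theorem~\ref{Thm:keyRep}. Under $\psi^{-1}$, the composite $\alpha\colon G/U_-\to\prescript{}{1}{\mathcal C_1^\mu}$ sends the class of $g$ to the class of $(1,g)$ (since $g\mu(t)=\psi_0(1,g)^{-1}$ up to the chosen sign convention $g^{-1}\mu(t)h\mapsto(g,h)$), so $\alpha$ is identified with the map $G/U_-\to\mathsf E_\mu\backslash G^2$ induced by $g\mapsto(1,g)$. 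The image of this map is the $\mathsf E_\mu$-saturation of $\{1\}\times G$, which is $P_-\times G$ (because $\mathsf E_\mu$ surjects onto $P_-$ in the first factor), and the induced map $G/U_-\to\mathsf E_\mu\backslash G^2$ is then the composite $G/U_-\cong P_-\times^{\mathsf E_\mu}G/\!\!\sim$ — more concretely, I would check directly that $(1,g)$ and $(1,g')$ are $\mathsf E_\mu$-equivalent iff $g'\in U_- g$, and that $\{1\}\times G\hookrightarrow G^2$ descends to a \emph{closed} immersion $U_-\backslash G\hookrightarrow\mathsf E_\mu\backslash G^2$ because $\{1\}\times G$ is closed in $G^2$ and is a union of $\mathsf E_\mu$-orbits' preimages; equivalently, $P_-\times G$ is $\mathsf E_\mu$-stable and closed in $G^2$, so its image in the quotient is closed, and the orbit map restricted to it realises $G/U_-$. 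Then $\alpha$, being the composite of $\psi$ (an isomorphism) with this closed immersion, is a closed immersion, and since $\tilde\alpha$ is an immersion with $G/U_-$ of finite type, the factorisation forces the projection $G\mu(t)\mathcal K_1/\mathcal K_1\to\prescript{}{1}{\mathcal C_1^\mu}$ to be a closed immersion as well.

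Assertions (3) and (4) are the mirror image: one uses $\prescript{}{\mu}{\mathrm{Emb}}\colon g\mapsto\mu(t)g$, the inclusion $\mu(t)\mathcal L^+U_+\,\mu(t)^{-1}\subseteq\mathcal K_1$ from Lemma~\ref{Lem:IntegLem}, and the same Bruhat/root-group analysis to see that the left stabiliser of $\mu(t)$ in $\prescript{}{1}{\mathcal C^\mu}=\mathcal K_1\backslash\mathcal C^\mu$ is exactly $U_+$, yielding $U_+\backslash G\cong\mathcal K_1\backslash\mathcal K_1\mu(t)G$; composing with the further quotient by $\mathcal K_1$ on the right and transporting through $\psi$ (this time $\mu(t)g\mapsto\psi_0(1,\cdot)$ in the \emph{other} factor, so the relevant closed $\mathsf E_\mu$-stable subscheme is $G\times P_+$) gives the closed immersion $\beta$. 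I expect the main obstacle to be the bookkeeping in (2): pinning down precisely which $\mathsf E_\mu$-orbits meet $\{1\}\times G$, verifying that the resulting subscheme of $\mathsf E_\mu\backslash G^2$ is genuinely closed (not merely locally closed), and matching the two sign/normalisation conventions — that for $\psi_0$ versus that for $\mathrm{Emb}_\mu$ — so that $\alpha$ really is identified with $g\mapsto[(1,g)]$ rather than with some Weyl-twisted variant; the passage from "immersion" to "closed immersion" hinges entirely on getting this identification exactly right, and everything else is the orbit-stabiliser formalism already used in §\ref{S:RepCIImu}.
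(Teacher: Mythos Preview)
Your strategy is the paper's: transport $\alpha$ through the isomorphism $\psi\colon\mathsf{E}_\mu\backslash G^2\xrightarrow{\cong}\prescript{}{1}{\mathcal{C}_1^\mu}$ of Theorem~\ref{Thm:keyRep}, identify the image with the quotient of a closed subscheme of $G^2$, and conclude by descent. The execution differs in two places worth noting.

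First, your normalisation worry is justified and your guess is off by the swap you feared. Since $\psi_0(g,h)=g^{-1}\mu(t)h$ and $\mathrm{Emb}_\mu(g)=g\mu(t)$, the transported map is $\alpha\colon gU_-\mapsto[(g^{-1},1)]$, not $[(1,g)]$; consequently the relevant $\mathsf{E}_\mu$-stable closed subscheme is $G\times P_+$, not $P_-\times G$. (Your $[(1,g)]$ with saturation $P_-\times G$ is exactly the picture for $\beta$ in part~(4).) The paper packages this as a pullback square
\[
\xymatrix{G\times_k P_+\ar[r]^-{d}\ar[d]_{\pi_1}& G^2\ar[d]^{\mathrm{pr}}\\ G/U_-\ar[r]^-{\alpha}&\mathsf{E}_\mu\backslash G^2,}
\]
where $d(g,u_+m)=(mg^{-1},p_+)$ is a monomorphism with image the closed subgroup $G\times P_+\subseteq G^2$, hence a closed immersion; faithfully flat descent along $\mathrm{pr}$ then gives that $\alpha$ is a closed immersion. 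This is your ``$\mathsf{E}_\mu$-saturation is closed, so descend'' step made precise.

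Second, the paper reverses your order for (1) and (2). Rather than computing the stabiliser of $\mu(t)$ in $\mathcal{C}_1^\mu$ directly (which, as you note, would re-run the block-matrix analysis from Lemma~\ref{Lem:keyInclu}), the paper first proves (2) as above and then observes that the composite $G/U_-\xrightarrow{\tilde\alpha} G\mu(t)\mathcal{K}_1/\mathcal{K}_1\to\prescript{}{1}{\mathcal{C}_1^\mu}$ equals $\alpha$; since $\alpha$ is a monomorphism, so is $\tilde\alpha$, and surjectivity onto the $G$-orbit is tautological. This avoids duplicating the stabiliser computation and makes (1) a one-line consequence of (2). Your direct route via Lemmata~\ref{Lem:IntegLem} and~\ref{Lem:keyInclu} does work, but it is the same content repackaged.
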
	
	\begin{proof} Below we only show (1) and (2). The proof of (3) and (4) is symmetric. 
		
		(2)  Using the isomorphism $ \psi:\mathsf{E}_{\mu}\backslash G^2 \cong \prescript{}{1}{\mathcal{C}_1^\mu}$, we obtain by transport of structure a morphism,
		\[\alpha: G/U_- \to \mathsf{E}_{\mu}\backslash G^2, \ \ \ g\mapsto (g^{-1}, 1). \]
		It is easy to see that we have pull-back diagram below, where $ \pi_1 $ is given by the composition of the  first projection $ \mathrm{pr}_1:G^2 \to G $ with the canonical projection $ G\to G/U_- $,  and $ d: G\times_k P_+\to G^2 $ is given by $ (g, p_+=u_+m)\longmapsto (mg^{-1}, p_+) $. 
		\begin{equation*}
			\xymatrix{G\times_{k}P_+\ar[r]^{d}\ar[d]_{\pi_1}& G^{2}\ar[d]^{\mathrm{pr}}\\
				G/U_-\ar[r]^{ \alpha}&\mathsf{E}_{\mu}\backslash G^2.  }
		\end{equation*}
		Clearly $ d $ is a monomorphism with image $ G\times_{k}P_+\subseteq G^{2}$, and hence is a closed immersion, since every subgroup of an algebraic group over $k$ is a closed subgroup; see e.g., \cite{MilneAlgebraicGroups}. Finally we conclude by faithfully flat descent ($ \mathrm{pr} $ is faithfully flat). 
		
		(1)  Since the embedding $\mathrm{Emb}_{\mu}: G\to \mathcal{C}^{\mu}$ and the homomorphism $\mathrm{Emb}_{\mathrm{ad}(\mu)}: U_-\to \mathcal{K}_1$ form a morphism of group action data, $(G, U_-)\to (\mathcal{C}^\mu_1, \mathcal{K}_1)$, we obtain a morphism of $k$-schemes, $\tilde{\alpha}: G/U_-\to \mathcal{C}^{\mu}_1$, which clearly factors through $G\mu(t)\mathcal{K}_1/\mathcal{K}_1\subseteq \mathcal{C}_1^{\mu}$. The induced morphism $\tilde{\alpha}: G/U_-\to G\mu(t)\mathcal{K}_1/\mathcal{K}_1 $ is injective by (2) and hence is a bijection as the surjectivity is obvious.
	\end{proof}
	
	\subsection{The Varieties \( G\,\mu(t)\,\mathcal{K}_1/\mathcal{K}_1 \subseteq \prescript{}{1}{\mathcal{C}_1^\mu} \) are Strongly Quasi-Affine}
\label{S:Grosshans}

As the preceding discussion identifies this embedding with \( G/U_- \hookrightarrow \mathsf{E}_{\mu} \backslash G^2 \), we now state the relevant properties of these varieties (our presentation follows, in part, the work of J. Wang \cite{WangRedMod}). In this subsection, we assume that our base field \( k \) is algebraically closed.

We first recall some definitions. A \( k \)-variety \( X \) is said to be \emph{strongly quasi-affine} if the ring 
\(
\Gamma(X, \mathcal{O}_{X})
\)
of global sections of \( X \) is a finitely generated \( k \)-algebra and the canonical morphism
\[
X \to X_{\mathrm{aff}} := \mathrm{Spec}\,\Gamma(X, \mathcal{O}_X)
\]
is an open immersion (the latter condition is equivalent to saying that \( X \) is quasi-affine). A (closed) subgroup \( H \subseteq G \) with the property that \( G/H \) is strongly quasi-affine is also called a \emph{Grosshans subgroup} (see \cite[Pg.~22]{GrosshansBook}). Let \( X \) be a normal \( k \)-variety with (right) \( G \)-action; then \( X \) is said to be \( G \)-\emph{spherical} if \( X \) has a dense open \( B \)-orbit for some Borel subgroup \( B \) of \( G \).

\begin{proposition}
\label{Prop:qaffine}The following holds:
\begin{enumerate}
  \item The \( k \)-varieties \( G/U_- \) and \( \prescript{}{1}{\mathcal{C}_1^\mu} \) are strongly quasi-affine; i.e., \( U_- \) is a Grosshans subgroup of \( G \), and \( \mathsf{E}_{\mu} \) is a Grosshans subgroup of \( G^2 \).
  
  \item The morphism 
  \(
  \big(G/U_-\big)_{\mathrm{aff}} \hookrightarrow \big({\prescript{}{1}{\mathcal{C}_1^\mu}}\big)_{\mathrm{aff}}
  \)
  induced by the closed immersion \( G/U_- \hookrightarrow \prescript{}{1}{\mathcal{C}_1^\mu} \) is again a closed immersion.
  
  \item The varieties \( G/U_- \) and \( \big(G/U_-\big)_{\mathrm{aff}} \) are \( G\times_k M \)-spherical, and the varieties \( \prescript{}{1}{\mathcal{C}_1^\mu} \) and \( \big(\prescript{}{1}{\mathcal{C}_1^\mu}\big)_{\mathrm{aff}} \) are \( G^2 \)-spherical.
\end{enumerate}
\end{proposition}

\begin{proof}
\begin{enumerate}
  \item The fact that \( G/U_- \) is strongly quasi-affine is well-known (see \cite[Theorem 16.4]{GrosshansBook}; for the quasi-affineness, one may also see \cite[Proposition 14.26]{MilneAlgebraicGroups}). Here the fact that \( \Gamma(G/U_-, \mathcal{O}_{G/U_-}) \) is a finitely generated \( k \)-algebra relies on a theorem of Grosshans \cite{Grosshans83} which states that in any characteristic, if \( U \subseteq H \) is the unipotent radical of a parabolic subgroup of a reductive \( k \)-group \( H \), then
  \[
  \Gamma(H/U, \mathcal{O}_{H/U}) = \Gamma(H, \mathcal{O}_H)^{U}
  \]
  is a finitely generated \( k \)-algebra. Applying the same theorem to the unipotent radical \( U_-\times_k U_+ \) of the parabolic subgroup \( P_-\times_k P_+ \subseteq G^2 \), we see that
  \(
  \Gamma(G^2, \mathcal{O}_{G^2})^{U_-\times_k U_+}
  \)
  is a finitely generated \( k \)-algebra, and hence so is
  \[
  \Gamma(\mathsf{E}_{\mu}\backslash G^2, \mathcal{O}_{\mathsf{E}_{\mu}\backslash G^2})
  = \Gamma(G^2, \mathcal{O}_{G^2})^{\mathsf{E}_{\mu}}
  = \Big(\Gamma(G^2, \mathcal{O}_{G^2})^{U_-\times_k U_+}\Big)^{\Delta(M)},
  \]
  since \( \Delta(M) \), denoting the image of \( M \) under the diagonal embedding \( \Delta: G \to G^2 \), is reductive.

  The quasi-affineness of \( \mathsf{E}_{\mu}\backslash G^2 \) follows from \cite[Proposition 2.4.4]{DrGConstant} (the fiber over \( 0 \in \mathbb{A}^1 \) of the space \( (\mathbb{A}^1\times G\times G)/\tilde{G} \) there is isomorphic to \( \mathsf{E}_{\mu}\backslash G^2 \)). Below we provide another proof. Recall that \( \mathsf{E}_{\mu}\backslash G^2 \) is in fact the geometric quotient of \( G^2 \) by \( \mathsf{E}_{\mu} \), and is a separated smooth \( k \)-variety. Secondly, note that we have the following equality
  \[
  \mathsf{E}_{\mu}\backslash G^2 = (U_-\times_k U_+)\backslash\Big(\Delta(M)\backslash G^2\Big),
  \]
  where the quotient \( \Delta(M)\backslash G^2 \) is a smooth affine \( k \)-variety since \( \Delta(M) \) is a reductive subgroup of \( G^2 \); said in another way, \( \mathsf{E}_{\mu}\backslash G^2 \) is also the geometric quotient of the smooth affine variety \( \Delta(M)\backslash G^2 \) by the connected unipotent group \( U_-\times_k U_+ \). Hence if we write \( X := \Delta(M)\backslash G^2 \), by \cite[Corollary 1.5]{Fauntleroy85} we have
  \[
  X = X^s(U_-\times_kU_+),
  \]
  where \( X^s(U_-\times_kU_+) \) denotes the subset of stable points of \( X \) with respect to the action of the connected unipotent group \( U_-\times_kU_+ \) (see loc. cit. for the definition). Finally, by the proof of Theorem 1.4 of loc. cit. (taking the open set \( V \) there to be the whole \( X \)) one concludes that \( \mathsf{E}_{\mu}\backslash G^2 \) is indeed quasi-affine.

  \item This is part of \cite[Theorem 4.1.4]{WangRedMod}. As remarked in loc. cit. (the paragraph above the theorem), in case \( k \) is of characteristic \( 0 \), this can also be deduced from \cite[Proposition 5]{ADCanEmb}.

  \item We only show here the case of
  \(
  \prescript{}{1}{\mathcal{C}_1^\mu} = \mathsf{E}_\mu\backslash G^2,
  \)
  as the case of \( G/U_- \) can be argued similarly. Choose a Borel subgroup \( B_+ \) of \( G \) that is contained in \( P_+ \) and denote by \( B_- \) its opposite parabolic subgroup. Then we have \( B_- \subseteq P_- \) and \( U_\pm \subseteq \mathrm{R}_uB_{\pm} \). We claim that the \( B_+\times_k B_- \)-orbit of \( (1, 1) \in \mathsf{E}_\mu\backslash G^2 \) is open and dense.

  The case where \( P_+ = P_- = M = G \) is clear, as this follows from the fact that the image of the multiplication map \( B_+\times B_- \to G \) is open and dense in \( G \). For the general case, we need to show that the image of \( B_+\times B_- \subseteq G\times G \) in \( \mathsf{E}_{\mu}\backslash G^2 \) under the canonical projection \( G^2\to \mathsf{E}_{\mu}\backslash G^2 \) is dense open. As \( P_-U_+ = U_- M U_+ \subseteq G \) is dense open, it is enough to show that the image of \( U_+(B_+\cap M) \times U_-(B_-\cap M) \subseteq B_+\times B_- \) in the dense open set
  \(
  \mathsf{E}_{\mu}\backslash \big(U_-P_+\times U_+P_-\big)
  \)
  of \( \mathsf{E}_{\mu}\backslash G^2 \) is dense open. Clearly, we have the equality
  \[
  \mathsf{E}_{\mu}\backslash \big(U_-P_+\times U_+P_-\big)
  = \Delta(M)\backslash \big(U_+M \times U_-M\big).
  \]
  Hence we are reduced to the case where \( P_+ = P_- = M \) since \( (B_+\cap M, B_-\cap M) \) form a pair of opposite Borel subgroups of the reductive group \( M \).

  We now check that
  \(
  \big(\mathsf{E}_{\mu}\backslash G^2\big)_{\mathrm{aff}}
  \)
  is \( G^2 \)-spherical: clearly the \( \mathsf{E}_{\mu} \)-action on \( \mathsf{E}_{\mu}\backslash G^2 \) extends uniquely to an action on
  \(
  \big(\mathsf{E}_{\mu}\backslash G^2\big)_{\mathrm{aff}}.
  \)
  The normality of \( \big(\mathsf{E}_{\mu}\backslash G^2\big)_{\mathrm{aff}} \) follows from that of \( \mathsf{E}_{\mu}\backslash G^2 \) (see \cite[tag 0358]{stacks-project}). Since \( \mathsf{E}_{\mu}\backslash G^2 \) is dense open in \( \big(\mathsf{E}_{\mu}\backslash G^2\big)_{\mathrm{aff}} \), every dense open orbit of \( B_+\times_k B_- \) in \( \mathsf{E}_{\mu}\backslash G^2 \) is also dense open in \( \big(\mathsf{E}_{\mu}\backslash G^2\big)_{\mathrm{aff}} \).
\end{enumerate}
\end{proof}

\section{The Moduli Stack {$[\prescript{}{1}{\mathcal{C}_1^\mu}/\mathrm{Ad}_\tau G]$} as a Zip Stack}\label{S:ShtukAsZip}

In this section, we show that via the isomorphism $\psi: \mathsf{E}_{\mu}\backslash G^2 \cong \prescript{}{1}{\mathcal{C}_1^\mu}$, quotient stacks of the form $[\prescript{}{1}{\mathcal{C}_1^\mu}/\mathrm{Ad}_{\tau}G]$ are zip stacks—that is, they are isomorphic to quotient stacks of $G$ by zip groups determined by $\mu$ (see \cite[Definition 3.1]{PinkWedhornZiegler1}). The theory of zip stacks was developed in \cite{Moonen&WedhornDiscreteinvariants, PinkWedhornZiegler1, PinkWedhornZiegler2}.

\subsection{The Stack $[\prescript{}{1}{\mathcal{C}_1^\mu}/\mathrm{Ad} G]$ as a Zip Stack}\label{S:zipstacknotwist}

	In this subsection, let $k$ be an arbitrary field. While our primary focus in this paper is on the stack $[\prescript{}{1}{\mathcal{C}_1^\mu}/\mathrm{Ad}{\tau}G]$ for the case where $k$ has characteristic $p$ and $\tau$ is a nontrivial Frobenius of $G$, we carefully discuss the interpretation of $[\prescript{}{1}{\mathcal{C}_1^\mu}/\mathrm{Ad}_{\tau}G]$ as a zip stack when $\tau=\mathrm{id}_{\mathcal{L}^+ G}$. For simplicity, we omit $\tau$ in the notation and write $[\prescript{}{1}{\mathcal{C}_1^\mu}/\mathrm{Ad} G]$ instead. The stack $[\prescript{}{1}{\mathcal{C}_1^\mu}/\mathrm{Ad}G]$ has the advantage of being meaningful for a field $k$ of arbitrary characteristic, revealing many symmetries.

The \emph{zip stack} $[G/\mathsf{E}_{\mu}]$ is defined as the quotient stack of $G$, obtained by the right action of $\mathsf{E}_{\mu}$ on $G$, given by
\begin{equation}\label{Eq:EmuActG}
	G \times_k \mathsf{E}_{\mu} \to G, \quad \big(g, (p_-, p_+)\big) \mapsto p_+^{-1} g p_-.
\end{equation}
The natural inclusion $U_- \subseteq \mathsf{E}_{\mu}$ realizes $U_-$ as a normal subgroup of $\mathsf{E}_{\mu}$, whose induced action on $G$ is given by right multiplication. Passing to the quotient gives us an induced action of $P_+$ on $G / U_-$, which on local sections is given by
\[ G / U_- \times_k P_+ \to G / U_-, \quad (g, p_+) \mapsto p_+^{-1} g m,\]
where $m$ is the Levi component of $p_+$. This action is well-defined since $M \subseteq P_-$ normalizes $U_-$. Dually, by taking the quotient by $U_+$, we obtain a right action of $P_-$ on $U_+ \backslash G$,
\[(U_+ \backslash G) \times_k P_- \to U_+ \backslash G, \quad (g, p_-) \mapsto m^{-1}g p_-,\]
where $m$ is the Levi component of $p_-$. 
We denote $[(G/U_-)/P_+]$ and $[(U_+ \backslash G)/P_-]$ as the corresponding quotient stacks. Since the actions of $U_-$ and $U_+$ on $G$ are free, it follows from the basic theory of algebraic stacks that the projections $G \to G / U_-$ and $G \to U_+ \backslash G$ induce canonical isomorphisms of algebraic $k$-stacks,
\[
[(U_+ \backslash G)/P_-] \xleftarrow{\cancong} [G/\mathsf{E}_{\mu}] \xrightarrow{\cancong} [(G/U_-)/P_+].
\]
As preparation for the next lemma, we also consider the action of $\mathsf{E}_{\mu} \times_k G$ on $G^2$ as follows,
\begin{equation}\label{TwoGrpAct}
	G^2 \times_k (\mathsf{E}_{\mu} \times_k G) \to G^2, \quad \big((h_1, h_2), (p_-, p_+, g)\big) \mapsto \big(p_-^{-1} h_1 g, p_+^{-1} h_2 g\big).
\end{equation}
Note that both $G \subseteq \mathsf{E}_{\mu} \times_k G$ and $\mathsf{E}_{\mu} \subseteq \mathsf{E}_{\mu} \times_k G$ are normal subgroups, and their induced commuting actions on $G^2$ are free. Note also that the induced action of $\mathsf{E}_{\mu}$ on $G^2$ coincides with the one considered in \S \ref{S:RepCIImu}, and the induced action of $G$ on $G^2$ is given by right multiplication diagonally. Hence the natural projections $G^2 \to G^2 / \Delta G$ and $G^2 \to \mathsf{E}_{\mu} \backslash G^2$ induce canonical isomorphisms of algebraic $k$-stacks,
\[
[(G^2 / \Delta G)/\mathsf{E}_{\mu}] \xleftarrow{\cancong} [G^2 / (\mathsf{E}_{\mu} \times_k G)] \xrightarrow{\cancong} [(\mathsf{E}_{\mu} \backslash G^2) / \Delta G].
\]

Here we use $G^2/\Delta G$ to denote the quotient scheme of $G^2$ obtained from the diagonal right multiplication of  $G$.

Write 
\(
\upsilon: G^2 / \Delta G \cong G,   
\)
for the isomorphism of $k$-schemes, defined by $(g, h) \mapsto h g^{-1}$. Then it is easy to verify that the induced action of $\mathsf{E}_{\mu}$ on $G^2 / \Delta G$ corresponds to the action induced from \eqref{Eq:EmuActG} via the isomorphism $\upsilon$, which yields the following isomorphism,
\[
[\upsilon]: [(G^2 / \Delta G)/\mathsf{E}_{\mu}] \cong [G / \mathsf{E}_{\mu}]. 
\]

On the other hand, one verifies that the induced action of $G$ on $\mathsf{E}_{\mu} \backslash G^2$ corresponds, under the isomorphism $\psi: \mathsf{E}_{\mu} \backslash G^2 \cong \prescript{}{1}{\mathcal{C}_1^\mu}$, precisely to the conjugation action of $G$ on $\prescript{}{1}{\mathcal{C}_1^\mu}$. Thus, we obtain the isomorphism,
\[
[\psi]: [(\mathsf{E}_{\mu} \backslash G^2) / \Delta G] \cong [\prescript{}{1}{\mathcal{C}_1^\mu} / \mathrm{Ad} G].
\]

The composition of these isomorphisms gives us the isomorphism \[[\prescript{}{1}{\mathcal{C}_1^\mu}/\mathrm{Ad} G] \cong [G / \mathsf{E}_{\mu}],\]
which identifies the moduli stack $[\prescript{}{1}{\mathcal{C}_1^\mu}/\mathrm{Ad} G]$ as the zip stack $[G / \mathsf{E}_{\mu}]$.

\begin{proposition}\label{Prop:ManyIsom}
	The following holds true:
	\begin{enumerate}
		\item The closed embedding $\alpha: G/U_- \to \prescript{}{1}{\mathcal{C}_1^\mu}$ (Corollary \ref{Cor:Embed}) is compatible with the embedding $P_+ \hookrightarrow G$ with respect to the corresponding group actions, thus inducing an isomorphism of algebraic $k$-stacks,
		\[
		[\alpha]: [(G/U_{-})/P_{+}] \xlongrightarrow{\cong} [\prescript{}{1}{\mathcal{C}_1^\mu}/\mathrm{Ad} G].
		\]
		
		\item The closed embedding $\beta: U_+ \backslash G \to \prescript{}{1}{\mathcal{C}_1^\mu}$ (Corollary \ref{Cor:Embed}) is compatible with the embedding $P_- \hookrightarrow G$ and the corresponding group actions, thus inducing an isomorphism of algebraic $k$-stacks,
		\[
		[\beta]: [(U_+ \backslash G)/P_{-}] \xlongrightarrow{\cong} [\prescript{}{1}{\mathcal{C}_1^\mu}/\mathrm{Ad} G].
		\]
		
		\item The following diagram of isomorphisms between algebraic $k$-stacks is 2-commutative:
		\[
		\xymatrix{
			[G/\mathsf{E}_{\mu}]\ar[r]^{\cancong}& [(U_+\backslash G)/P_-]\ar[r]^{[\beta]}_{\cong}&[\prescript{}{1}{\mathcal{C}_1^\mu}/\mathrm{Ad} G] \\
			[(G^2/\Delta G)/\mathsf{E}_{\mu}]\ar[d]_{[\upsilon]}^{\cong}\ar[u]^{[\upsilon]}_{\cong}&[G^2/(\mathsf{E}_{\mu}\times_k G)]\ar[l]_{\cancong}\ar[r]^{\cancong}&[(\mathsf{E}_{\mu}\backslash G^2)/\Delta G]\ar[d]^{[\psi]}_{\cong}\ar[u]_{[\psi]}^{\cong}\\
			[G/\mathsf{E}_{\mu}]\ar[r]^{\cancong}&[(G/U_-)/P_+] \ar[r]^{[\alpha]}_{\cong}&[\prescript{}{1}{\mathcal{C}_1^\mu}/\mathrm{Ad} G].
		}
		\]
	\end{enumerate}
\end{proposition}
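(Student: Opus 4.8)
The plan is to deduce the entire proposition from the single explicit isomorphism $\psi\colon\mathsf{E}_{\mu}\backslash G^{2}\cong\prescript{}{1}{\mathcal{C}_1^\mu}$ of Theorem~\ref{Thm:keyRep} together with the freeness of the various $U_{\pm}$- and $\Delta G$-actions, so that after the two structural inputs below the proof becomes bookkeeping with the prestack descriptions $[-/-]^{\mathrm{pre}}$ of \S\ref{S:GpActData}. The first input is the matching of group actions under $\psi$ and $\upsilon$: on local sections, for $g\in G(A)$ and $(g_{1},g_{2})\in G^{2}(A)$ one has
\[
\psi_{0}(g_{1}g,\,g_{2}g)=(g_{1}g)^{-1}\mu(t)(g_{2}g)=g^{-1}\,\psi_{0}(g_{1},g_{2})\,g,
\]
so $\psi$ carries right multiplication by $\Delta G$ on $\mathsf{E}_{\mu}\backslash G^{2}$ to $\mathrm{Ad}\,G$ on $\prescript{}{1}{\mathcal{C}_1^\mu}$; likewise $\upsilon(g,h)=hg^{-1}$ carries the descended $\mathsf{E}_{\mu}$-action $(p_{-},p_{+})\colon(g,h)\mapsto(p_{-}^{-1}g,p_{+}^{-1}h)$ on $G^{2}/\Delta G$ to the action $x\mapsto p_{+}^{-1}xp_{-}$ on $G$ of \eqref{Eq:EmuActG}. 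These two identities are what make $[\psi]$ and $[\upsilon]$ genuine isomorphisms of stacks. The second input is the free-quotient principle $[X/H]\cong[(X/N)/(H/N)]$ for a normal $N\subseteq H$ acting freely on $X$, which yields all the arrows labelled $\cong$ in the diagram — in particular $[(G/U_{-})/P_{+}]\cong[G/\mathsf{E}_{\mu}]\cong[(U_{+}\backslash G)/P_{-}]$ and the two horizontal isomorphisms in the middle row.

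The one genuinely non-formal point is the equivariance of the embeddings $\alpha$ and $\beta$. For $\alpha\colon G/U_{-}\to\prescript{}{1}{\mathcal{C}_1^\mu}$, $\overline{g}\mapsto[\,g\mu(t)\,]$, I would check it intertwines the $P_{+}$-action $\overline{g}\cdot p_{+}=\overline{p_{+}^{-1}g\,m}$ ($m$ the Levi component of $p_{+}$) with the restriction of $\mathrm{Ad}\,G$ to $P_{+}$: computing $[\,g\mu(t)\,]\cdot p_{+}=[\,p_{+}^{-1}g\mu(t)p_{+}\,]$ and using $\mu(t)u_{+}\mu(t)^{-1}\in\mathcal{K}_{1}$ from Lemma~\ref{Lem:IntegLem} (write $p_{+}=u_{+}m$ and note $\mu(t)p_{+}=(\mu(t)u_{+}\mu(t)^{-1})\,m\mu(t)$), one sees that $p_{+}^{-1}g\mu(t)p_{+}$ and $p_{+}^{-1}g\,m\mu(t)$ differ only by left multiplication by an element of $\mathcal{K}_{1}$, because $\mathcal{K}_{1}$ is normal in $\mathcal{L}^{+}G$; hence they have the same image in $\prescript{}{1}{\mathcal{C}_1^\mu}=\mathcal{K}_{1}\backslash\mathcal{C}^{\mu}/\mathcal{K}_{1}$. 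The statement for $\beta\colon U_{+}\backslash G\to\prescript{}{1}{\mathcal{C}_1^\mu}$, $\overline{x}\mapsto[\,\mu(t)x\,]$, relative to $P_{-}$ is the mirror computation, again using Lemma~\ref{Lem:IntegLem} and the normality of $\mathcal{K}_{1}$.

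Granting all of this, part (3) is proved by chasing an object $(g,h)\in G^{2}(A)$ and a morphism through the two routes of the diagram: the route through $[\psi]$ sends $(g,h)$ to the class of $g^{-1}\mu(t)h$, while the route through $[\upsilon]$ together with $[\alpha]$ (resp.\ $[\beta]$) sends it to the class of $\mu(t)hg^{-1}$, and these two are related by conjugation by $g$; one checks that $(g,h)\mapsto g$ is a natural transformation, its naturality square collapsing to the defining relation $p_{-}g'=gx$ for a morphism $(p_{-},p_{+},x)\colon(g,h)\to(g',h')$ in $[G^{2}/(\mathsf{E}_{\mu}\times_{k}G)]$. Parts (1) and (2) then follow, since this $2$-commutativity identifies $[\alpha]$ and $[\beta]$ with composites of the isomorphisms listed above. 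I expect the main obstacle to be organizational rather than conceptual: the twist $\upsilon(g,h)=hg^{-1}$ and the numerous left-versus-right normalizations fixed throughout \S\ref{S:RepCIImu}--\S\ref{S:zipstacknotwist} mean that a single misplaced inverse propagates through the whole diagram, so the real work is keeping the conventions straight rather than surmounting any genuine difficulty.
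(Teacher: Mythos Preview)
Your proposal is correct and follows essentially the same approach as the paper: both verify the equivariance of $\alpha$ (and dually $\beta$) via the key computation $\mu(t)u_+\mu(t)^{-1}\in\mathcal{K}_1$ from Lemma~\ref{Lem:IntegLem} together with normality of $\mathcal{K}_1$ in $\mathcal{L}^+G$, and both deduce that $[\alpha],[\beta]$ are isomorphisms from the diagram in (3) rather than directly. The only cosmetic difference is in establishing the $2$-commutativity of (3): the paper chooses sections $i_2\colon g\mapsto(1,g)$ and $i_1\colon g\mapsto(g^{-1},1)$ on which the two routes are induced by literally the same morphism of group action data, whereas you chase a general $(g,h)$ and exhibit the explicit natural transformation (conjugation by $g^{-1}$)---both are valid and amount to the same bookkeeping.
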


\begin{proof}
	\begin{enumerate}
		\item Let $A$ be a $k$-algebra. Take $g \in G(A)$ and $p_+ \in P_+(A)$. Then we have the following equality:
		\[
		\begin{array}{rll}
			\alpha(g \cdot p_+) &= p_{+}^{-1} g \mu(t) m &= \Big(p_+^{-1} g (\mu(t) u_+^{-1} \mu(t)^{-1}) g^{-1} p_+\Big) \Big(p_+^{-1} g \mu(t) p_+\Big) \\
			&= p_+^{-1} g \mu(t) p_+ &= \alpha(g) \cdot p_+.
		\end{array}
		\]
		Here, the third identity holds because, according to Lemma \ref{Lem:IntegLem}, we have $\mu(t) u_+^{-1} \mu(t)^{-1} \in \mathcal{K}_1(A)$. This proves the first assertion of (1). The second assertion of (1) will follow from the commutativity of the upper square in the diagram of (3).

		\item We leave the proof to the reader, as the assertions are dual to those of (1).

		\item We first show that the upper square of the diagram is $2$-commutative. We consider the embedding $i_2: G \to G^2$ above and define an embedding of algebraic $k$-groups as follows:
		\[
		j_1: \mathsf{E}_{\mu} \hookrightarrow \mathsf{E}_{\mu} \times_k G, \quad (p_-, p_+) \mapsto (p_-, p_+, p_-).
		\]
		One can verify that the pair $(i_2, j_1)$ is compatible with the action of $\mathsf{E}_{\mu}$ on $G$ and the action of $\mathsf{E}_{\mu} \times_k G$ on $G^2$ defined in \eqref{TwoGrpAct}, thus forming a morphism of group action data from $(G, \mathsf{E}_{\mu})$ to $(G^2, \mathsf{E}_{\mu} \times_k G)$. It can be directly checked that the induced morphism
		$[i_2]: [\mathsf{E}_{\mu} \backslash G] \to [G^2 / (\mathsf{E}_{\mu} \times_k G)]$ provides \emph{an inverse} of the composition,
		\begin{equation}\label{Composition}
			[G^2 / (\mathsf{E}_{\mu} \times_k G)] \xrightarrow{\cancong} [\big(G^2 / \Delta G\big) / \mathsf{E}_{\mu}] \xrightarrow[\cong]{[\upsilon]} [G / \mathsf{E}_{\mu}].
		\end{equation} 
		Hence we need to show that the two isomorphisms from $[G / \mathsf{E}_{\mu}] \to [\prescript{}{1}{\mathcal{C}_1^\mu}/\mathrm{Ad} G]$,
		$[\psi] \circ \mathsf{can} \circ [i_2]$ and $[\beta] \circ \mathsf{can}$ are 2-commutative. It is straightforward to verify that both isomorphisms are induced by the morphism of group action data:
		\[
		(b, q_1): (G, \mathsf{E}_{\mu}) \to (\prescript{}{1}{\mathcal{C}_1^\mu}, G),
		\]
		where $b: G \to \prescript{}{1}{\mathcal{C}_1^\mu}$ is given by $g \mapsto \mu(t) g$, and $q_1: \mathsf{E}_{\mu} \to G$ is given by $(p_-, p_+) \mapsto p_-$.
        
        The 2-commutativity of the lower square follows from a dual argument, noting that
		\[
		i_1: G \to G^2, \ g \mapsto (g^{-1}, 1), \quad j_2: \mathsf{E}_{\mu} \hookrightarrow \mathsf{E}_{\mu} \times G, \ (p_-, p_+) \mapsto (p_-, p_+, p_+),
		\]
		form a morphism of group action data $(i_1, j_2): (G, \mathsf{E}_{\mu}) \to (\prescript{}{1}{\mathcal{C}_1^\mu}, G)$, whose induced morphism of $k$-stacks $[i_1]: [G/\mathsf{E}_{\mu}] \to [\prescript{}{1}{\mathcal{C}_1^\mu}/\mathrm{Ad} G]$ provides \emph{another inverse} of the composition \eqref{Composition}. The remaining arguments are left to the reader.
	\end{enumerate}
\end{proof}

\subsection{The Stack $[\prescript{}{1}{\mathcal{C}_1^\mu}/\mathrm{Ad}_{\tau}G]$ as a Zip Stack}\label{S:zipstack}

In this subsection, let $k$ be a field of characteristic $p$, and assume further that $G$ is defined over $\mathbb{F}_p \subseteq k$. Due to this assumption, we have canonical isomorphisms
\[
G^{\sigma} \cancong G, \quad \mathcal{L}^+ G \cancong \mathcal{L}^+ G^{\sigma}, \quad \mathcal{L} G \cancong \mathcal{L} G^{\sigma}.
\]
Consider 
\(
\tau = \sigma^{n}: G \to G^{\sigma^n} \cancong G,
\)
the $n$-th iteration of $\sigma: G \to G$, for some $n \geq 0$. Let $\tau: \mathcal{L}^+ G \to \mathcal{L}^+ G$ and $\tau: \mathcal{L} G \to \mathcal{L} G$ be the induced Frobenii as in \eqref{Sigofloopgp}. 

Denote by $[G/\mathsf{R}_{\tau}\mathsf{E}_{\mu}]$ the quotient stack of $G$, obtained from the following \emph{right partial Frobenius multiplication} of $\mathsf{E}_{\mu}$ on $G$:
\begin{equation}\label{Eq:EmuActGTau}
	\mathsf{R}_\tau: G \times_k \mathsf{E}_{\mu} \to G, \quad \big(g, (p_-, p_+)\big) \mapsto p_+^{-1} g \tau(p_-).
\end{equation}
The natural inclusion $U_+ \subseteq \mathsf{E}_{\mu}$ realizes $U_+$ as a normal subgroup of $\mathsf{E}_{\mu}$, whose induced right action on $G$ is given by left multiplication. Passing to the quotient by $U_+$, the group action \eqref{Eq:EmuActGTau} becomes,
\begin{equation}\label{Eq:PminusAction}
   (U_+ \backslash G) \times_k P_- \to U_+ \backslash G, \quad (g, p_-) \mapsto m^{-1} g \tau(p_-), 
\end{equation}

where $m$ is the Levi component of $p_-$.
This action is well-defined since $M \subseteq P_+$ normalizes $U_+$.

We denote $[(U_+\backslash G)/_{\tau}P_-]$ as the corresponding quotient stacks. Since the action of $U_+$ on $G$ is free, the canonical projection $G \to U_+ \backslash G$ induces an isomorphism of algebraic $k$-stacks:
\[
[G/\mathsf{R}_{\tau}\mathsf{E}_{\mu}] \xrightarrow{\cancong} [(U_+\backslash G)/_{\tau}P_-].
\]
As in the previous subsection, we consider a right action of $\mathsf{E}_{\mu}\times_k G$ on $G^2$ as follows:
\begin{equation}\label{GpActDelta}
	\delta: G^2 \times_k (\mathsf{E}_{\mu} \times_k G) \to G^2, \quad \big((h_1, h_2), (p_-, p_+, g)\big) \mapsto \big(p_-^{-1} h_1 g, p_+^{-1} h_2 \tau(g)\big).
\end{equation}
Let us denote $[G^2/(\mathsf{E}_{\mu} \times_k G)]$ as the resulting quotient stack over $k$. 

Note that both 
\(
G \subseteq \mathsf{E}_{\mu} \times_k G\) and \(\mathsf{E}_{\mu} \subseteq \mathsf{E}_{\mu} \times_k G
\)
are normal subgroups whose commuting actions on \(G^2\) are free. Note also that the \(\mathsf{E}_{\mu}\)-action on \(G^2\) induced by \(\delta\) coincides with that considered in \S \ref{S:RepCIImu},  Thus, the natural projections 
\(
G^2 \to G^2/_{\delta}G\) and \( G^2 \to \mathsf{E}_{\mu} \backslash G^2
\)
induce canonical isomorphisms of algebraic \(k\)-stacks:
\[
\Bigl[\bigl(G^2/_{\delta}G\bigr)/\mathsf{E}_{\mu}\Bigr]
\xleftarrow{\cancong} \bigl[G^2/(\mathsf{E}_{\mu}\times_k G)\bigr] 
\xrightarrow{\cancong} \Bigl[(\mathsf{E}_{\mu}\backslash G^2)/_{\delta}G\Bigr].
\]
Here we use  \(G^2/_{\delta}G\) to denote the quotient scheme of \(G^2\) under the induced \(G\)-action. 

Write 
\(
\rho: G^2/_{\delta}G \cong G
\)
for the isomorphism of $k$-schemes given by $ (g,h) \mapsto h\,\tau(g)^{-1}$. It is straightforward to verify that the induced \(\mathsf{E}_{\mu}\)-action on \(G^2/_{\delta}G\) coincides via \(\rho\) with the one defined in \eqref{Eq:EmuActGTau}. Hence, we obtain the isomorphism of \(k\)-stacks
\[
[\rho]: \Bigl[(G^2/_{\delta}G)/\mathsf{E}_{\mu}\Bigr] \cong \bigl[G/\mathsf{R}_{\tau}\mathsf{E}_{\mu}\bigr].
\]

Similarly, one verifies that the \(G\)-action on \(\mathsf{E}_{\mu}\backslash G^2\) induced by \(\delta\) corresponds, via the isomorphism 
\(
\psi: \mathsf{E}_{\mu}\backslash G^2 \cong \prescript{}{1}{\mathcal{C}_1^\mu},
\)
to the \(\tau\)-conjugation action of \(G\) on \(\prescript{}{1}{\mathcal{C}_1^\mu}\). Therefore, we also have the isomorphism
\[
[\psi]: \Bigl[(\mathsf{E}_{\mu}\backslash G^2)/_{\delta}G\Bigr] \cong \Bigl[\prescript{}{1}{\mathcal{C}_1^\mu}/\mathrm{Ad}_{\tau}G\Bigr].
\]

\begin{lemma}\label{Lem:ManyIsom2}The following holds:
	\begin{enumerate}
		\item The closed embedding $\beta: U_+ \backslash G \hookrightarrow \prescript{}{1}{\mathcal{C}_1^\mu}$ (Corollary \ref{Cor:Embed}) and the inclusion $P_- \hookrightarrow G$ are compatible with the action of $P_-$ on $U_+ \backslash G$ in \eqref{Eq:PminusAction} and the $\tau$-conjugation action of $G$ on $\prescript{}{1}{\mathcal{C}_1^\mu}$, thus inducing an isomorphism of algebraic $k$-stacks:
		\[
		[\beta]: [(U_+ \backslash G)/_{\tau}P_{-}] \xlongrightarrow{\cong} [\prescript{}{1}{\mathcal{C}_1^\mu}/\mathrm{Ad}_{\tau} G].
		\] 
		
		\item The following diagram of isomorphisms between algebraic $k$-stacks is 2-commutative:
		\begin{equation*}
			\xymatrix{[ (G^2/_{\delta} G)/\mathsf{E}_{\mu}]\ar[d]_{[\rho]}^{\cong}&[G^2/(\mathsf{E}_{\mu}\times_k G)]\ar[l]_{\cancong}\ar[r]^{\cancong}&[(\mathsf{E}_{\mu}\backslash G^2)/_{\delta}G]\ar[d]^{[\psi]}_{\cong}\\
				[G/\mathsf{R}_{\tau}\mathsf{E}_{\mu}]\ar[r]^{\cancong}& [(U_+\backslash G)/_{\tau}P_-]\ar[r]^{[\beta]}_{\cong}&[\prescript{}{1}{\mathcal{C}_1^\mu}/\mathrm{Ad}_{\tau} G].}
		\end{equation*}
		In particular, we have an isomorphism of zero-dimensional smooth Artin $k$-stacks:
		\[
		[G/\mathsf{R}_{\tau}\mathsf{E}_{\mu}] \cong [\prescript{}{1}{\mathcal{C}_1^\mu}/\mathrm{Ad}_{\tau} G].
		\]
	\end{enumerate}
\end{lemma}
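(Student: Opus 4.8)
The plan is to follow the proof of Proposition~\ref{Prop:ManyIsom}, inserting the Frobenius twist $\tau$ at the right spots; apart from one compatibility check done by hand, everything is formal. Observe first that, since $\tau=\sigma^n$, we have $\tau(\mathcal{K}_1)\subseteq\mathcal{K}_1$, so the $\mathrm{Ad}_\tau$-actions in sight factor through $\mathrm{pr}:\mathcal{L}^+ G\to G$ and $[\prescript{}{1}{\mathcal{C}_1^\mu}/\mathrm{Ad}_\tau G]$ is defined. I would then record the preliminary canonical isomorphisms: as $U_+\subseteq\mathsf{E}_\mu$ is normal and acts freely on $G$ by left multiplication, the projection $G\to U_+\backslash G$ gives $[G/\mathsf{R}_\tau\mathsf{E}_\mu]\cancong[(U_+\backslash G)/_\tau P_-]$; likewise $G$ and $\mathsf{E}_\mu$ are normal in $\mathsf{E}_\mu\times_k G$ with free induced $\delta$-actions on $G^2$, which yields the two horizontal maps out of $[G^2/(\mathsf{E}_\mu\times_k G)]$, while $\rho: G^2/_\delta G\xrightarrow{\cong}G$, $(g,h)\mapsto h\tau(g)^{-1}$ (its inverse induced by the embedding $i_2:G\hookrightarrow G^2$ into the second factor) together with $\psi$ supply the two vertical maps. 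All of this is standard for quotients by free actions of smooth affine $k$-group schemes.

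For part (1), I would represent $\beta([g])$ by the double coset $\mathcal{K}_1\mu(t)g\mathcal{K}_1$; this is well defined on $U_+\backslash G$ because $\mu(t)U_+\mu(t)^{-1}\subseteq\mathcal{L}^1U_+\subseteq\mathcal{K}_1$ by Lemma~\ref{Lem:IntegLem}. One then checks directly that $\beta$ and the inclusion $P_-\hookrightarrow G$ are compatible with the $\tau$-twisted $P_-$-action $g\mapsto m^{-1}g\tau(p_-)$ on $U_+\backslash G$ (writing $p_-=u_-m$ with $m\in M$) and the $\tau$-conjugation $G$-action on $\prescript{}{1}{\mathcal{C}_1^\mu}$: since $m$ commutes with $\mu(t)$ and $\mu(t)^{-1}u_-^{-1}\mu(t)\in\mathcal{L}^1U_-\subseteq\mathcal{K}_1$ by Lemma~\ref{Lem:IntegLem},
\[
\mathcal{K}_1\, p_-^{-1}\mu(t)g\tau(p_-)\,\mathcal{K}_1=\mathcal{K}_1\,\mu(t)m^{-1}\bigl(\mu(t)^{-1}u_-^{-1}\mu(t)\bigr)g\tau(p_-)\,\mathcal{K}_1=\mathcal{K}_1\,\mu(t)m^{-1}g\tau(p_-)\,\mathcal{K}_1 ,
\]
the last equality because $\mathcal{K}_1$ is normal in $\mathcal{L}^+ G$, so its element may be conjugated past the constant $g\tau(p_-)\in G$ and then absorbed into the right-hand $\mathcal{K}_1$; in other words $\beta([g])\cdot p_-=\beta\bigl(m^{-1}g\tau(p_-)\bigr)$, as required. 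The same bookkeeping — now also using $\mu(t)U_+\mu(t)^{-1}\subseteq\mathcal{L}^1U_+\subseteq\mathcal{K}_1$ — shows that $(b,q_1):(G,\mathsf{E}_\mu)\to(\prescript{}{1}{\mathcal{C}_1^\mu},G)$, with $b(g)=\mu(t)g$ and $q_1(p_-,p_+)=p_-$, is a morphism of group action data for the $\mathsf{R}_\tau$-action on $G$ and the $\mathrm{Ad}_\tau$-action on $\prescript{}{1}{\mathcal{C}_1^\mu}$, and that $i_2$ together with $j_1:\mathsf{E}_\mu\hookrightarrow\mathsf{E}_\mu\times_k G$, $(p_-,p_+)\mapsto(p_-,p_+,p_-)$, is one from $(G,\mathsf{E}_\mu)$ with the $\mathsf{R}_\tau$-action to $(G^2,\mathsf{E}_\mu\times_k G)$ with the $\delta$-action.

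Then, exactly as in the proof of Proposition~\ref{Prop:ManyIsom}(3), $[i_2]$ provides an inverse of the composite $[G^2/(\mathsf{E}_\mu\times_k G)]\xrightarrow{\mathsf{can}}[(G^2/_\delta G)/\mathsf{E}_\mu]\xrightarrow{[\rho]}[G/\mathsf{R}_\tau\mathsf{E}_\mu]$, so the 2-commutativity of the diagram in (2) reduces to the claim that the two maps $[G/\mathsf{R}_\tau\mathsf{E}_\mu]\to[\prescript{}{1}{\mathcal{C}_1^\mu}/\mathrm{Ad}_\tau G]$ obtained by following its two sides coincide; both are induced by the single morphism of group action data $(b,q_1)$, and hence they do. Traversing the boundary of the diagram then gives the isomorphism $[G/\mathsf{R}_\tau\mathsf{E}_\mu]\cong[\prescript{}{1}{\mathcal{C}_1^\mu}/\mathrm{Ad}_\tau G]$, which is a $0$-dimensional smooth Artin $k$-stack by Corollary~\ref{Cor:ArtStack} applied to $\tau$; the isomorphism assertion of part (1) also follows, since the diagram exhibits $[\beta]$ (post-composed with the canonical isomorphism $[G/\mathsf{R}_\tau\mathsf{E}_\mu]\cancong[(U_+\backslash G)/_\tau P_-]$) as a composite of isomorphisms. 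I expect the only real difficulty to be bookkeeping — keeping the twist $\tau$ on the correct factor of each action, and absorbing the $\mathcal{K}_1$-corrections furnished by Lemma~\ref{Lem:IntegLem} at the correct (left or right) end of the double coset $\mathcal{K}_1\backslash\mathcal{C}^\mu/\mathcal{K}_1$ — not any genuine geometric obstacle.
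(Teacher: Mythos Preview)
Your proposal is correct and follows essentially the same approach as the paper: the compatibility check in (1) is the same computation as in the paper (using Lemma~\ref{Lem:IntegLem} to absorb $\mu(t)^{-1}u_-\mu(t)$ into $\mathcal{K}_1$), and for (2) you reproduce exactly the argument of Proposition~\ref{Prop:ManyIsom}(3)---defining $(i_2,j_1)$ and $(b,q_1)$ as morphisms of group action data and reducing the 2-commutativity to the observation that both routes around the diagram are induced by $(b,q_1)$. The paper's own proof of (2) simply says ``word by word adaptation of the proof of~\ref{Prop:ManyIsom}(3),'' which is precisely what you have written out.
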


\begin{proof}
	\begin{enumerate}
		\item This is an easy calculation similar to Lemma \ref{Prop:ManyIsom}.(1). Let $A$ be a $k$-algebra, $g \in G(A)$, and $p_- \in P_-(A)$. We have the following equality:
		\[
		\begin{array}{rll}
			\beta(g \cdot p_-) &= \mu(t) m^{-1} g \tau(p_-) &= \Big(p_-^{-1} \mu(t) g \tau(p_-)\Big) \Big(\tau(p_-)^{-1} g^{-1} \mu(t)^{-1} u_- \mu(t) g \tau(p_-)\Big) \\
			&= p_-^{-1} \mu(t) g \tau(p_-) &= \beta(g) \cdot p_-.
		\end{array}
		\]
		
		\item The proof is a word-for-word adaptation of the proof of \ref{Prop:ManyIsom}.(3). 
	\end{enumerate}
\end{proof}

\begin{remark}
In comparison with Lemma~\ref{Prop:ManyIsom}, one may expect a loss of symmetry when the \(\tau\)-conjugation is nontrivial. Note that when \(n=0\), Lemma~\ref{Lem:ManyIsom2} coincides with Lemma~\ref{Prop:ManyIsom}.
\end{remark}

\begin{remark}
Let \(\tau=\sigma\). Advanced readers may notice the similarity between \([G/\mathsf{R}_{\sigma}\mathsf{E}_{\mu}]\) and the \(G\)-zip stack \({G\text{-}\mathsf{Zip}}^{\mu}\) defined in \cite{PinkWedhornZiegler2}, as the latter is isomorphic to the quotient stack \([G/\mathsf{E}_{\mu}^{\mathrm{PWZ}}]\), where \(\mathsf{E}_{\mu}^{\mathrm{PWZ}}\) is a Frobenius-twisted variant of \(\mathsf{E}_{\mu}\). Their relationship will be explored in Section~\ref{S:Gzips}.
\end{remark}

\subsection{Moduli Stacks of 1-1-Truncated Local $G$-Shtukas vs $G$-Zip Stacks}\label{S:Gzips}
In this section, we let $G$ and $k$ be as in \S \ref{S:zipstack}, and take $\tau=\sigma: \mathcal{L}^+G \to \mathcal{L}^+G$. Clearly the stack $[\prescript{}{1}{\mathcal{C}_1^\mu}/\mathrm{Ad}_{\sigma}G]$ receives a canonical projection from the stack 
\[
[\mathcal{C}^{\mu}/\mathrm{Ad}_{\sigma}\mathcal{L}^+ G],
\] 
which is the \emph{classifying stack of local $G$-shtukas of type $\mu$}. We call $[\prescript{}{1}{\mathcal{C}_1^\mu}/\mathrm{Ad}_\sigma G]$ the \emph{(coarse) moduli stack of 1-1 truncated local $G$-shtukas} of type $\mu$; cf. Remark \ref{Rmk:trunSch}.

The focus of this subsection is the comparison of $[\prescript{}{1}{\mathcal{C}_1^\mu}/\mathrm{Ad}_\sigma G]$ and the $G$-zip stack $[G/\mathsf{E}_{\mu}^{\mathrm{PWZ}}]$  (recalled below). Our discussion on $[G/\mathsf{E}_{\mu}^{\mathrm{PWZ}}]$ follows the same logic as in \S \ref{S:zipstacknotwist} and \S \ref{S:zipstack}. 


\begin{definition}[The Frobenius Zip Group of $\mu$]
	For a cocharacter $\mu: \mathbb{G}_{m,k} \to G$, let $\mathsf{E}_{\mu}^{\mathrm{PWZ}} \subseteq P_-^{\tau} \times_k P_+$ be the closed subgroup whose $A$-valued points for a $k$-algebra $A$ are given by 
	\begin{align*}
		\mathsf{E}_{\mu}^{\mathrm{PWZ}}(A) = \{(p_- = u_- \tau(m), p_+ = u_+ m) \;|\; m \in M(A), \; u_+ \in U_+(A), \; u_- \in U_-^{\tau}(A)\},
	\end{align*}
	where we use the decomposition $P_+ = U_+ \rtimes M$ and $P_-^{\tau} = U_-^{\tau} \rtimes M^{\tau}$.
\end{definition}

 Consider the following right action of $\mathsf{E}_{\mu}^{\mathrm{PWZ}}$ on $G$:
\begin{equation}\label{Eq:emuActGTau}
	G \times_k \mathsf{E}_{\mu}^{\mathrm{PWZ}} \to G, \quad (g, (p_-, p_+) ) \mapsto p_+^{-1} g p_-.
\end{equation}
Denote by $[G/\mathsf{E}_{\mu}^{\mathrm{PWZ}}]$ the resulting quotient algebraic $k$-stack of $G$. The natural inclusion $U_-^{\tau} \subseteq \mathsf{E}_{\mu}^{\mathrm{PWZ}}$ realizes $U_-^{\tau}$ as a normal subgroup of $\mathsf{E}_{\mu}^{\mathrm{PWZ}}$, whose induced action on $G$ is given by right multiplication. Passing to the quotient by $U_-^{\tau}$ gives us an induced action of $P_+$ on $G/U_-^{\tau}$, which on local sections is given by
\[
G/U_-^{\tau} \times_k P_+ \to G/U_-^{\tau}, \quad (g, p_+) \mapsto p_+^{-1} g \tau(m),
\]
where $m$ is the Levi component of $p_+$. This action is well-defined since $M^{\tau} \subseteq P_-^{\tau}$ normalizes $U_-^{\tau}$. Denote $[(G/U_-^{\tau})/P_+]$ as the corresponding quotient stacks. Since the action of $U_-^{\tau}$ on $G$ is free, the canonical projection $\mathrm{pr}:G \to G/U_-^{\tau}$ induces an isomorphism of algebraic $k$-stacks:
\begin{equation}\label{GpActXi}
	[G/\mathsf{E}_{\mu}^{\mathrm{PWZ}}] \xrightarrow{\cancong} [(G/U_-^{\tau})/P_+]. 
\end{equation}
Following the previous two subsections, we also consider the action of $\mathsf{E}_{\mu}^{\mathrm{PWZ}} \times_k G$ on $G^2$ as given below:
\[
G^2 \times_k (\mathsf{E}_{\mu}^{\mathrm{PWZ}} \times_k G) \to G^2, \quad \big((h_1,h_2), (p_-, p_+, g)\big) \mapsto \big(p_-^{-1} h_1 g, p_+^{-1} h_2 g\big).
\]

We denote $[G^2/(\mathsf{E}_{\mu}^{\mathrm{PWZ}} \times_k G)]$ as the resulting quotient stack over $k$. 

Note that $G \subseteq \mathsf{E}_{\mu}^{\mathrm{PWZ}} \times_k G$ and $\mathsf{E}_{\mu}^{\mathrm{PWZ}} \subseteq \mathsf{E}_{\mu}^{\mathrm{PWZ}} \times_k G$ are normal subgroups, and their induced commuting actions on $G^2$ are free. Note also that the induced action of $\mathsf{E}_{\mu}^{\mathrm{PWZ}}$ on $G^2$ coincides with the action described in \eqref{Eq:emuActGTau}, and the induced action of $G$ on the quotient scheme $\mathsf{E}_{\mu} \backslash G^2$ is by right multiplication diagonally. Hence the natural projections $G^2 \to G^2/\Delta G$ and $G^2 \to \mathsf{E}_{\mu} \backslash G^2$ induce canonical isomorphisms of algebraic $k$-stacks:
\[
[(G^2/\Delta G)/\mathsf{E}_{\mu}^{\mathrm{PWZ}}] \xleftarrow{[\upsilon]} [G^2/(\mathsf{E}_{\mu}^{\mathrm{PWZ}} \times_k G)] \xrightarrow{\cancong} [(\mathsf{E}_{\mu}^{\mathrm{PWZ}} \backslash G^2)/\Delta G].
\]

Denote by $\upsilon: G^2/\Delta G \cong G$ the isomorphism of $k$-schemes given by $(g, h) \mapsto h g^{-1}$. One verifies that the induced actions of $\mathsf{E}_{\mu}^{\mathrm{PWZ}}$ on $G^2/\Delta G$ coincides via $\upsilon$ with the action defined in \eqref{Eq:emuActGTau}. Therefore, we obtain an isomorphism of $k$-stacks, 
\[
[(G^2/\Delta G)/\mathsf{E}_{\mu}^{\mathrm{PWZ}}] \xrightarrow[\cong]{[\upsilon]}[G/\mathsf{E}_{\mu}^{\mathrm{PWZ}}].
\]

To compare $[\prescript{}{1}{\mathcal{C}_1^\mu}/\mathrm{Ad}_{\tau}G]$ with $[G/\mathsf{E}_{\mu}^{\mathrm{PWZ}}]$, we consider the following sequence of morphisms:
\[
G^2 \xrightarrow{\tau \times \mathrm{id}} G^2 \xrightarrow{\mathrm{id} \times \tau} G^2.
\]
It is straightforward to verify that this sequence is compatible with the sequence of group homomorphisms,
\[
\mathsf{E}_{\mu} \times_k G \xrightarrow{\mathrm{id} \times \tau \times \tau} \mathsf{E}_{\mu}^{\mathrm{PWZ}} \times_k G \xrightarrow{\tau \times \mathrm{id} \times \mathrm{id}} \mathsf{E}_{\tau(\mu)} \times_k G,
\]
with respect to the corresponding group actions, thus inducing a sequence of morphisms between $k$-stacks:
\begin{equation}\label{IntertwinSeq}
	[G^2/(\mathsf{E}_{\mu} \times_k G)] \xrightarrow{[\tau \times \mathrm{id}]} [G^2/(\mathsf{E}_{\mu}^{\mathrm{PWZ}} \times_k G)] \xrightarrow{[\mathrm{id} \times \tau]} [G^2/(\mathsf{E}_{\tau(\mu)} \times_k G)].
\end{equation}

\begin{proposition}\label{Prop:ManyIsom3} 
	The following holds:
	\begin{enumerate}
		\item The sequence \eqref{IntertwinSeq} induces the following 2-commutative diagram of morphisms of zero-dimensional smooth Artin $k$-stacks:
		\begin{equation*}
			\xymatrix{[\prescript{}{1}{\mathcal{C}_1^\mu}/\mathrm{Ad}_{\tau}G] \ar[rr]^{[\tau]} && [\prescript{}{1}{\mathcal{C}_1^{\tau(\mu)}}/\mathrm{Ad}_{\tau}G] \\
				[(\mathsf{E}_{\mu} \backslash G^2)/_{\delta}G] \ar[r]^{[\tau \times \mathrm{id}]}\ar[u]^{[\psi]}_{\cong} & [( \mathsf{E}^{\mathrm{PWZ}}_{\mu} \backslash G^2)/\Delta G] \ar[r]^{[\mathrm{id} \times \tau]} & [(G^2/\mathsf{E}_{\tau(\mu)})/_{\delta} G] \ar[u]^{\cong}_{[\psi]} \\
				[G^2/(\mathsf{E}_{\mu} \times_k G)] \ar[r]^{[\tau \times \mathrm{id}]}\ar[d]^{\cancong} \ar[u]_{\cancong} & [G^2/(\mathsf{E}_{\mu}^{\mathrm{PWZ}} \times_k G)] \ar[r]^{[\mathrm{id} \times \tau]}\ar[d]^{\cancong} \ar[u]_{\cancong} & [G^2/(\mathsf{E}_{\tau(\mu)} \times_k G)] \ar[d]_{\cancong} \ar[u]^{\cancong} \\
				[(G^2/_{\delta}G)/\mathsf{E}_{\mu}] \ar[r]^{[\tau \times \mathrm{id}]}\ar[d]^{\cong}_{[\rho]} & [(G^2/\Delta G)/\mathsf{E}_{\mu}^{\mathrm{PWZ}}] \ar[r]^{[\mathrm{id} \times \tau]}\ar[d]^{\cong}_{[\upsilon]} & [(G^2/_{\delta}G)/\mathsf{E}_{\tau(\mu)}] \ar[d]^{[\rho]}_{\cong} \\
				[G/\mathsf{R}_{\tau} \mathsf{E}_{\mu}] \ar[r]^{[\mathrm{id}]} & [G/\mathsf{E}_{\mu}^{\mathrm{PWZ}}] \ar[r]^{[\tau]} & [G/\mathsf{R}_{\tau} \mathsf{E}_{\tau(\mu)}].}
		\end{equation*}
		
		\item Every arrow in the diagram above becomes an isomorphism of stacks after passing to perfection.
		
		\item The closed embedding $\alpha: G/U_-^{\tau} \hookrightarrow \prescript{}{1}{\mathcal{C}_1^{\tau(\mu)}}$ (see Corollary \ref{Cor:Embed}) is compatible with the inclusion $P_+ \hookrightarrow G$, with respect to the action of $P_+$ on $G/U_-^{\tau}$ and the $G$-$\tau$ conjugation action on $\prescript{}{1}{\mathcal{C}_1^{\tau(\mu)}}$, thus inducing a morphism of algebraic $k$-stacks:
		\[
		[\alpha]: [(G/U_-^{\tau})/P_+] \longrightarrow [\prescript{}{1}{\mathcal{C}_1^{\tau(\mu)}}/\mathrm{Ad}_{\tau} G].
		\]
		
		\item The following diagram between morphisms of algebraic $k$-stacks is $2$-commutative:
		\begin{equation*}
			\xymatrix{[G/\mathsf{E}_{\mu}^{\mathrm{PWZ}}] \ar[dr]_{\gamma} \ar[rr]^{\cancong} && [ (G/U_-^{\tau})/P_+] \ar[dl]^{[\alpha]} \\
				& [\prescript{}{1}{\mathcal{C}_1^{\tau(\mu)}}/\mathrm{Ad}_{\tau} G].}
		\end{equation*}
		Here, $\gamma$ is the morphism obtained in (1) above. 

        \item Every arrow in the diagram above induces a \emph{homeomorphism} on topological spaces. 
	\end{enumerate}
\end{proposition}

\begin{proof}
	\begin{enumerate}
		\item We compute as in the proof of \ref{Prop:ManyIsom}.(1). Let $A$ be a $k$-algebra. Take $g \in G(A)$ and $p_+ \in P_+(A)$. Then we have the following equality:
		\[
		\begin{array}{rll}
			\alpha(g \cdot p_+) &= p_{+}^{-1} g \mu^{\tau}(t) \tau(m) &= \Big(p_+^{-1} g \tau(\mu(t) u_+^{-1} \mu^{\tau}(t)^{-1}) g^{-1} p_+\Big) \Big(p_+^{-1} g \mu^{\tau}(t) \tau(p_+)\Big) \\
			&= p_+^{-1} g \mu^{\tau}(t) \tau(p_+) &= \alpha(g) \cdot p_+.
		\end{array}
		\]

		\item This is a straightforward verification since the whole diagram is induced by the sequence \eqref{IntertwinSeq}. 
		\item This will follow from the commutative diagram in (4). 
        \item  Note that the composition $[\alpha] \circ \mathsf{can}$ is induced by the morphism of group action data \[(a, q_2): (G, \mathsf{E}^{\mathrm{PWZ}}_{\mu}) \to (\prescript{}{1}{\mathcal{C}_1^{\tau(\mu)}}, G),\] 
		where $a: G \to \prescript{}{1}{\mathcal{C}_1^{\tau(\mu)}}$ is given by $g\mapsto g\mu^{\tau}(t)$, and $q_2: \mathsf{E}_{\mu}^{\mathrm{PWZ}} \to G$ is given by $(p_-, p_+) \mapsto p_+$. The morphism $\gamma$ is induced by the morphism of group action data 
		\[
		(b, q_1): (G, \mathsf{E}^{\mathrm{PWZ}}_{\mu}) \to (\prescript{}{1}{\mathcal{C}_1^{\tau(\mu)}}, G),
		\] 
		where $b: G \to \prescript{}{1}{\mathcal{C}_1^{\tau(\mu)}}$ is given by $g\mapsto \mu^{\tau}(t)\tau(g)$, and where $q_1: \mathsf{E}_{\mu}^{\mathrm{PWZ}} \to G$ is given by $(p_-, p_+) \mapsto p_-$. By \S \ref{S:GpActData}, it suffices to set up a $k$-morphism $\beta: G\to G $, such that the equality \[q_2 (p_-, p_+) \beta (p_+^{-1}g p_-)= \beta(g)q_1(p_-, p_+),\] holds for all $ g\in G$ and all $(p_-, p_+)\in \mathrm{E}_{\mu}^{\mathrm{PWZ}}$. One verifies easily that taking $\beta=\mathrm{id}_G$ suffices. 
        \item It suffices to show that the sequence 
\[
[G/\mathsf{R}_{\tau}\mathsf{E}_{\mu}]\xrightarrow{[\mathrm{id}]}[G/\mathsf{E}_{\mu}^{\mathrm{PWZ}}]\xrightarrow{[\tau]}[G/\mathsf{R}_{\tau}\mathsf{E}_{\tau(\mu)}]
\]
induces homeomorphisms on the underlying topological spaces. Note that for every perfect \( k \)-algebra \( A \), the iterated Frobenii on \( A \)-valued points,
\[
\tau: G(A)\to G^{\tau}(A)=G(A),\quad \tau: P_+(A)\to P_+^{\tau}(A),\quad \tau: P_-(A)\to P_-^{\tau}(A)
\]
are bijections. It follows that the morphisms \( [\mathrm{id}] \) and \( [\tau] \) induce bijections between orbit spaces,
\[
G(A)/\mathsf{R}_{\tau}\mathsf{E}_{\mu}(A)\xrightarrow{\epsilon_1} G(A)/\mathsf{E}_{\mu}^{\mathrm{PWZ}}(A)\xrightarrow{\epsilon_2} G(A)/\mathsf{R}_{\tau}\mathsf{E}_{\tau(\mu)}(A).
\]
Note also that if \( A=k \) is algebraically closed, with all terms in the preceding sequence equipped with the quotient topology of \( G(k) \), the maps \( \epsilon_1 \) and \( \epsilon_2 \) are identified with \( [\mathrm{id}]^{\mathrm{top}} \) and \( [\tau]^{\mathrm{top}} \) respectively. Using the fact that the iterated Frobenius \( G(k)\xrightarrow{\tau} G(k) \) is a homeomorphism and that each orbit space is equipped with the quotient topology of \( G(A) \), one easily deduces that both \( \epsilon_1 \) and \( \epsilon_2 \) are homeomorphisms.

	\end{enumerate}
\end{proof}

\begin{corollary}[Comparison Between Shtukas and Zips]\label{Cor:Comp0}
	Suppose that $k$ is a finite field. Then we have a chain of algebraic $k$-stacks:
	\[
	\cdots \to [\prescript{}{1}{\mathcal{C}_1^{\tau^i(\mu)}}/\mathrm{Ad}_{\tau}G] \to [G/\mathsf{E}^{\mathrm{PWZ}}_{\tau^{i}(\mu)}] \to [\prescript{}{1}{\mathcal{C}_1^{\tau^{i+1}(\mu)}}/\mathrm{Ad}_{\tau}G] \to \cdots, \quad i \in \mathbb{Z}.
	\]

	Moreover, each connecting morphism in the above sequence induces a \emph{homeomorphism} on the underlying topological spaces. 
    
    In particular, if $k=\mathbb{F}_{p^n}$ and $\tau = \sigma$, we have a cyclic chain of homeomorphisms of algebraic stacks over $k$, whose perfections become isomorphisms:
	\[
	[\prescript{}{1}{\mathcal{C}_1^{\mu}}/\mathrm{Ad}_{\sigma}G] \to G\text{-}\mathsf{Zip}^{\mu} \to [\prescript{}{1}{\mathcal{C}_1^{\sigma(\mu)}}/\mathrm{Ad}_{\sigma}G] \to \cdots \to [\prescript{}{1}{\mathcal{C}_1^{\sigma^n(\mu)}}/\mathrm{Ad}_{\sigma}G] = [\prescript{}{1}{\mathcal{C}_1^{\mu}}/\mathrm{Ad}_{\sigma}G].
	\]
\end{corollary}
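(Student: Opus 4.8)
The statement is, as the surrounding text indicates, a bookkeeping consequence of Proposition~\ref{Prop:ManyIsom2}; the plan is to assemble that proposition's conclusions over all Frobenius twists of $\mu$ at once. Since $k$ is finite, $\sigma\colon k\to k$ is an automorphism, hence so is the twist $\mu\mapsto\tau(\mu)=\mu^{\sigma^m}$ on cocharacters of $G$, and in particular $\tau^i(\mu)$ makes sense for every $i\in\mathbb{Z}$. For each such $i$ I apply Proposition~\ref{Prop:ManyIsom2} with $\mu$ replaced by $\tau^i(\mu)$ (its standing hypotheses, $G/\mathbb{F}_p$ and $\tau=\sigma^m$, hold throughout \S\ref{S:Gzips}). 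Part~(1) of that proposition, combined with the vertical isomorphisms $[\psi],[\rho],[\upsilon]$ of its diagram and with the identification $[G/\mathsf{R}_\tau\mathsf{E}_\nu]\cong[\prescript{}{1}{\mathcal{C}_1^\nu}/\mathrm{Ad}_\tau G]$ of Lemma~\ref{Lem:ManyIsom2}(2) (for $\nu=\tau^i(\mu)$ and $\nu=\tau^{i+1}(\mu)$), produces a pair of morphisms
\[
[\prescript{}{1}{\mathcal{C}_1^{\tau^i(\mu)}}/\mathrm{Ad}_\tau G]\ \longrightarrow\ [G/E_{\tau^i(\mu)}]\ \longrightarrow\ [\prescript{}{1}{\mathcal{C}_1^{\tau^{i+1}(\mu)}}/\mathrm{Ad}_\tau G]
\]
of zero-dimensional smooth Artin $k$-stacks. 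Splicing these for all $i\in\mathbb{Z}$ yields the asserted periodic chain; periodicity is automatic since, writing $k=\mathbb{F}_{p^n}$, we have $\sigma^n=\mathrm{id}_k$, so $\tau^j(\mu)=\mu$ (and $E_{\tau^j(\mu)}=E_\mu$) whenever $n\mid mj$.

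For the topological and perfection statements: by Proposition~\ref{Prop:ManyIsom2}(2),(3) every arrow in its diagram induces a homeomorphism on underlying topological spaces and an isomorphism after passing to perfection, while the vertical arrows are honest isomorphisms; hence each connecting morphism of the chain built above, being a composite of such maps, is a homeomorphism of topological spaces and becomes an isomorphism of perfect stacks. For the final cyclic assertion, take $\tau=\sigma$ (so $m=1$) and $k=\mathbb{F}_{p^n}$: then $\sigma^n=\mathrm{id}_k$ forces $\prescript{}{1}{\mathcal{C}_1^{\sigma^n(\mu)}}=\prescript{}{1}{\mathcal{C}_1^{\mu}}$ and $E_{\sigma^n(\mu)}=E_\mu$ on the nose, so the $2n$ morphisms running from $\mu$ back to $\mu$ close up into a cycle. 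Identifying $[G/E_{\sigma^i(\mu)}]$ with the $G$-zip stack $G\text{-}\mathsf{Zip}^{\sigma^i(\mu)}$ via \cite{PinkWedhornZiegler2}, under the normalization of Definition~\ref{Def:EmuFrob}, rewrites the middle terms as $G$-zip stacks and gives exactly the displayed cyclic chain of homeomorphisms whose perfections are isomorphisms.

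There is no essential difficulty here, all the substantial work having been carried out in Proposition~\ref{Prop:ManyIsom2} and Lemma~\ref{Lem:ManyIsom2}. The only points deserving care are: matching the Pink--Wedhorn--Ziegler description $G\text{-}\mathsf{Zip}^\mu\cong[G/E_\mu]$ with our right-action/partial-Frobenius conventions, so that the twisted group $E_\mu$ of Definition~\ref{Def:EmuFrob} is precisely the zip group relevant when $\tau=\sigma$; and keeping track of base points and of the bijectivity of $\mu\mapsto\tau(\mu)$ over a finite field, so that the bi-infinite chain is well defined for all $i\in\mathbb{Z}$ and genuinely closes up after the expected $n$ applications of the $[\tau]$-morphisms.
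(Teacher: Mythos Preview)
Your proposal is correct and follows essentially the same approach as the paper, which simply states that the result is a formal consequence of parts (1) and (2) of Proposition~\ref{Prop:ManyIsom2} obtained by replacing $\mu$ with its Frobenius twist repeatedly. You have spelled out the bookkeeping in more detail (the well-definedness of $\tau^i(\mu)$ for negative $i$ over a finite field, the splicing of the chains, and the invocation of part~(3) for the perfection statement), but the strategy is identical.
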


\begin{proof}
	This is a formal consequence of (1) and (2) in Proposition \ref{Prop:ManyIsom3} by repeatedly replacing $\mu$ with its Frobenius twist.
\end{proof}

\begin{remark}
  Intuitively, the zip stack ${G\text{-}\mathsf{Zip}}^{\mu}$ classifies 1-truncated $p$-divisible groups of type~$\mu$. The preceding corollary justifies $[\prescript{}{1}{\mathcal{C}_1^\mu}/\mathrm{Ad}_{\sigma}G]$ as the function-field analog of ${G\text{-}\mathsf{Zip}}^{\mu}$ and also indicates that ${G\text{-}\mathsf{Zip}}^{\mu}$ can be viewed as the partial Frobenius twist of its function-field counterpart. In particular, they share the same underlying topological spaces.
\end{remark}

\begin{corollary}\label{Cor:Comp}
	Let us consider the situation described in Remark \ref{Rmk:EqvsMix}. We then have the following summarizing commutative diagram of finite topological spaces:
	\begin{equation}\label{Eq:diamond}
		\xymatrix{ & {}^JW \ar[dl]_{\epsilon_1} \ar[dr]^{\epsilon_3} \ar[d]^{\epsilon_2} & \\ 
			[\prescript{}{1}{\mathcal{C}_1^{\sigma(\mu)}}/\mathrm{Ad}_{\sigma}G](k) \ar[dr]_{\cancong} & {G\text{-}\mathsf{Zip}}^{\mu}(k) \ar[l]_{\quad \quad \omega} \ar[r]^{\omega'\quad\quad} & [\prescript{}{1}{\mathrm{C}_1^{\sigma(\mu)}}/\mathrm{Ad}_{\sigma}G](k) \ar[dl]^{\cancong}. \\
			& [G/\mathsf{R}_{\sigma} \mathsf{E}_{\mu}](k) \ar[u]^{\cancong} & }   
	\end{equation}
\end{corollary}

\begin{proof}
	The commutativity of the left part of the diagram follows by construction. By Remark \ref{Rmk:EqvsMix}, the right side of diagram \eqref{Eq:diamond} is identified with the left part.
\end{proof}
\begin{remark}  We note the following:
\begin{enumerate}
    \item Without Theorem \ref{Thm:eqvsmix}, identifying $\omega$ and $\omega'$ independently of $\omega_1,\omega_3$ was unclear. These bijections were constructed in \cite{Wortmann} ( for $\omega'$) and \cite{Yan18} ( for $\omega$), but neither addressed topology.
    \item While $\epsilon_1,\epsilon_2$ are nontrivial homeomorphisms, Corollary \ref{Cor:Comp} shows their homeomorphism assertions are equivalent.
\end{enumerate}
\end{remark}

\subsection{The Moduli Stack {$[\mathcal{C}^\mu / \mathrm{Ad}_\sigma \mathcal{K}]$} as a Pro-Zip Stack}\label{S:prozip}

In this subsection, we let $k$, $G$, and $\mu$ be as in \S \ref{S:Gzips}. Recall that in \S \ref{S:UnipGps}, we introduced the group subfunctor $\mathcal{E}_{\mu}^{\dagger}$ of $\mathcal{H}^- \times_k \mathcal{H}^+$. Below, we define a similar subfunctor of ${}^-\mathcal{H} \times_k {}^+\mathcal{H}$. 

\begin{definition}\label{Def:ProZipGp} 
	The \emph{pro-zip group} of $\mu$ is defined as the group subfunctor $\mathcal{E}_{\mu} \subseteq {}^-\mathcal{H} \times_k {}^+\mathcal{H}$:
	\begin{align}
		\mathcal{E}_{\mu} = \{(h, g) \in {}^-\mathcal{H} \times_k {}^+\mathcal{H} \;|\; h = \mu(t) g \mu(t)^{-1}\}. 
	\end{align}
\end{definition}

From the definition of $\mathcal{E}_{\mu}$, we obtain the following isomorphisms of $k$-group schemes:
\[
{}^+\mathcal{H} \xrightarrow[\cong]{(\mathrm{Ad}_{\mu(t)}, \mathrm{id})} \mathcal{E}_{\mu} \xrightarrow[\cong]{\mathrm{pr}_1} {}^-\mathcal{H}.
\]

\begin{lemma} The following holds:
	\begin{enumerate}
		\item The map $\mathcal{L}^+ G \times \mathcal{L}^+ G \to \mathcal{C}^{\mu}$ given by $(g_1, g_2) \mapsto g_1^{-1}\mu(t) g_2$ induces an isomorphism of $k$-schemes, 
		\[
		\mathcal{E}_\mu \backslash (\mathcal{L}^+ G \times \mathcal{L}^+ G) \cong \mathcal{C}^{\mu},
		\]
		where $\mathcal{E}_\mu$ acts on $\mathcal{L}^+ G \times_k \mathcal{L}^+ G$ by the rule $(x, y) \cdot (h, g) = (h^{-1} x, g^{-1} y)$ with $(h, g) \in \mathcal{E}_\mu$. 
		
		\item The sequence of $k$-schemes 
		\[
		\mathcal{L}^+ G \xrightarrow{i_2} \mathcal{L}^+ G \times \mathcal{L}^+ G \xrightarrow{\mathrm{pr}} \mathcal{E}_\mu \backslash (\mathcal{L}^+ G \times \mathcal{L}^+ G) \cong \mathcal{C}^{\mu}
		\]
		induces natural isomorphisms of $k$-stacks:
		\[
		[\mathcal{L}^+ G / \mathcal{E}_\mu] \cong [\mathcal{C}^{\mu} / \mathrm{Ad}\mathcal{L}^+ G], \quad [\mathcal{L}^+ G / \mathsf{R}_\sigma \mathcal{E}_\mu] \cong [\mathcal{C}^{\mu} / \mathrm{Ad}_{\sigma}\mathcal{L}^+ G],
		\]
		where $\mathsf{R}_\sigma \mathcal{E}_\mu$ denotes the action of $\mathcal{E}_\mu$ on $\mathcal{L}^+ G$ by the rule $x \cdot (h,g) = g^{-1} x \sigma(h)$ with $x \in \mathcal{L}^+ G$. 
		
		\item The diagrams below are commutative:
		\begin{align}\label{Eq:ZipRed}
			\xymatrix{\mathcal{E}_\mu \backslash (\mathcal{L}^+ G \times \mathcal{L}^+ G) \ar[d] \ar[rr]^{\cong} && \mathcal{C}^{\mu} \ar[d] \\
				\mathsf{E}_{\mu} \backslash (G \times G) \ar[rr]^{\cong} && \prescript{}{1}{\mathcal{C}_1^\mu},} \quad 
			\xymatrix{[\mathcal{L}^+ G / \mathcal{E}_\mu] \ar[d] \ar[r]^{\cong} & [\mathcal{C}^{\mu} / \mathrm{Ad}\mathcal{L}^+ G] \ar[d] \\
				[G / \mathsf{E}_{\mu}] \ar[r]^{\cong} & [\prescript{}{1}{\mathcal{C}_1^\mu} / \mathrm{Ad} G],} \quad  
			\xymatrix{[\mathcal{L}^+ G / \mathsf{R}_\sigma \mathcal{E}_\mu] \ar[d] \ar[r]^{\cong} & [\mathcal{C}^{\mu} / \mathrm{Ad}_{\sigma} \mathcal{L}^+ G] \ar[d] \\
				[G / \mathsf{R}_{\sigma} \mathsf{E}_{\mu}] \ar[r]^{\cong} & [\prescript{}{1}{\mathcal{C}_1^\mu} / \mathrm{Ad}_{\sigma} G].}  
		\end{align}
	\end{enumerate}
\end{lemma}
\begin{proof}
	(1) follows from straightforward verification and (2) follows from an abstract-nonsense argument as in \S \ref{S:ShtukAsZip}.
\end{proof}

The significance of the isomorphism 
$[\mathcal{L}^+ G / \mathsf{R}_\sigma \mathcal{E}_\mu] \cong [\mathcal{C}^{\mu} / \mathrm{Ad}_{\sigma}(\mathcal{L}^+ G)]$
is that it interprets local $G$-shtukas as $\mathcal{L}^+G$-zips. Specifically, for a $k$-scheme $X$, the category of local $G$-shtukas of type~$\mu$ over~$X$ is equivalent to the category of $\mathcal{L}^+G$-zips of type~$\mu$ over~$X$. In addition, the left side of diagram~\eqref{Eq:ZipRed} is integral, so it admits direct reduction modulo~$t^n$. For example, reduction modulo~$t$ of the top of diagram~\eqref{Eq:ZipRed} recovers the bottom isomorphisms.

\section{Application to Zip Period Maps of Shimura Varieties}\label{S:App}

This section logically extends the work presented in \cite{Yan18} and \cite{Yan24}. The setup here follows that of \cite[\S 3]{Yan24}. 

Let $(\textbf{G}, \textbf{X})$ be a Shimura datum of Hodge type, and denote by $\mathcal{S}_{\mathsf{K}}$ the Kisin-Vasiu integral model of the associated Shimura variety $\mathrm{Sh}_{\mathsf{K}}(\textbf{G}, \textbf{X})$ of level $\mathsf{K}=\mathsf{K}_p\mathsf{K}^p$, which is hyperspecial at $p$. This hyperspecial condition on $\mathsf{K}$ implies that $\textbf{G}_{\mathbb{Q}_p}$ admits a reductive $\mathbb{Z}_p$-model $\mathcal{G}$, whose special fibre we denote by $G$, a (connected) reductive group over $\mathbb{F}_p$. Recall that the integral model $\mathcal{S}_{\mathsf{K}}$ is a quasi-projective and smooth scheme over $\mathcal{O}$, the localization at some place above $p$ of the ring of integers of the reflex field of $(\textbf{G}, \textbf{X})$. Write $\kappa$ for the residue field of $\mathcal{O}$, and $S := \mathcal{S}_{\mathsf{K}} \otimes_{\mathcal{O}} \kappa$. Let $\mu: \mathbb{G}_{m,\kappa} \to G_\kappa$ be a representative for the reduction over $\kappa$ of the $ \textbf{G}(\mathbb{C}) $-conjugacy class $[\mu]_{\mathbb{C}}$ containing the inverses of Hodge cocharacters $ \mathbb{G}_{m, \mathbb{C}} \to \textbf{G}_{\mathbb{C}}$ determined by $(\textbf{G}, \textbf{X})$. 

Denote by $P_{\pm} \subseteq G_\kappa$ the opposite parabolic subgroups of $G_\kappa$ defined by $\mu$, and $U_{\pm} \subseteq P_{\pm}$ the corresponding unipotent radicals, and $U_-^\sigma$ as the base change of $U_-$ along $\sigma: \kappa \to \kappa$.

We will apply results from previous sections to the specific situation where $k = \kappa$, $G = G_\kappa$, and $\tau = \sigma$. In particular, we can form the Frobenius zip group $\mathsf{E}_{\mu}^{\mathrm{PWZ}}$ and the zip stack $[G_{\kappa} / \mathsf{E}_{\mu}^{\mathrm{PWZ}}]$. Recall that the zip period map for $S$ is a smooth morphism of algebraic $\kappa$-stacks 
\[
\zeta: S \to [G_{\kappa} / \mathsf{E}_{\mu}^{\mathrm{PWZ}}],
\]
which is constructed in generality (our Shimura variety in question is of Hodge type) by C. Zhang in \cite{ChaoZhangEOStratification}, in order to define and study Ekedahl-Oort strata of $S$. The construction in loc. cit. employs the language of $G$-zips developed in \cite{PinkWedhornZiegler2}. A local construction of $\zeta$, which bypasses the language of $G$-zips and is more adapted to the situation here, is provided in \cite{Yan24}. More precisely, we constructed in loc. cit. a $P_+$-equivariant morphism of smooth $\kappa$-schemes, 
\[
\gamma: \mathrm{I}_+ \to G_{\kappa} / U_-^{\sigma},
\]
inducing the zip period map $\zeta$ mentioned above, where the $\kappa$-scheme $\mathrm{I}_+$ is a $P_+$-torsor over $S$ (see for instance \cite[\S 4.2]{Yan24} for its definition). 

Recall that for the cocharacter $\mu$, apart from the usual Frobenius twist $\sigma(\mu)$, we have also introduced another one, namely $\varphi(\mu)$ in \S \ref{S:TwoFrob}. Recall that they are related by $\varphi(\mu) = p\sigma(\mu)$. Recall also that in \cite{Yan18}, we constructed a morphism of $\kappa$-schemes, $\theta: \mathrm{I}_+ \to \mathcal{C}_1^{\varphi(\mu)}$, and a morphism of fpqc sheaves $\eta: S \to \prescript{}{1}{\mathcal{C}_1^{\varphi(\mu)}}/\mathrm{Ad}_{\sigma} G_{\kappa}$, fitting into the commutative diagram 
\begin{equation*}
	\xymatrix{\mathrm{I}_+ \ar[rr]^{\theta} \ar[d] && \mathcal{C}_1^{\varphi(\mu)} \ar[d]^{\mathrm{Pr}} \\
		S \ar[rr]^{\eta} && \prescript{}{1}{\mathcal{C}_1^{\varphi(\mu)}}/\mathrm{Ad}_{\sigma} G_{\kappa},} 
\end{equation*}
where $\prescript{}{1}{\mathcal{C}_1^{\varphi(\mu)}}/\mathrm{Ad}_{\sigma} G_{\kappa}$ denotes the fpqc quotient sheaf of $\prescript{}{1}{\mathcal{C}_1^{\varphi(\mu)}}$ by $\sigma$-conjugation of $G_{\kappa}$. It was also shown that the geometric fibres of $\eta$ recover the Ekedahl-Oort strata of $S$; cf. \S 2.5 of loc. cit. 

Note that there $\mathcal{C}_1^{\varphi(\mu)}$ is denoted by $\mathcal{D}_1$, and our quotient sheaf  $\prescript{}{1}{\mathcal{C}_1^{\varphi(\mu)}}/\mathrm{Ad}_{\sigma} G_{\kappa}$ here is equal to the sheaf $\mathcal{D}_1/\mathcal{K}^{\diamond}$ in that work.

\begin{theorem} The following holds:
	\begin{enumerate}
		\item The embedding $\mathrm{Emb}_{\sigma(\mu)}: G_{\kappa} \hookrightarrow \mathcal{C}^{\sigma(\mu)}$ induces an immersion of $\kappa$-schemes:
		\[
		\tilde{\alpha}: G_{\kappa}/U_-^{\sigma} \hookrightarrow \mathcal{C}_1^{\sigma(\mu)},
		\]
		and the composition $\alpha: G_{\kappa}/U_-^{\sigma} \to \mathcal{C}_1^{\sigma(\mu)} \to \prescript{}{1}{\mathcal{C}_1^{\sigma(\mu)}}$ is a closed immersion. 
		\item The embedding $\mathrm{Emb}_{\varphi(\mu)}: G_{\kappa} \hookrightarrow \mathcal{C}^{\varphi(\mu)}$ induces an immersion of $\kappa$-schemes:
		\[
		\tilde{\alpha}': G_{\kappa}/U_-^{\sigma} \to \mathcal{C}_1^{\varphi(\mu)},
		\]
		and the composition $\alpha': G_{\kappa}/U_-^{\sigma} \to \mathcal{C}_1^{\varphi(\mu)} \to \prescript{}{1}{\mathcal{C}_1^{\varphi(\mu)}}$ is a closed immersion. 
		\item The association $g \sigma(\mu)(t) h \mapsto g \varphi(\mu)(t) h$ defines an isomorphism of $\kappa$-schemes:
		\[
		\epsilon: \prescript{}{1}{\mathcal{C}_1^{\sigma(\mu)}} \cong \prescript{}{1}{\mathcal{C}_1^{\varphi(\mu)}}, \quad \text{such that} \ \alpha' = \epsilon \circ \alpha.
		\]
		\item The closed immersions $\alpha, \alpha'$ induce morphisms of algebraic $\kappa$-stacks:
		\[
		[\alpha]: G_\kappa \textsf{-Zip}^\mu  \to [\prescript{}{1}{\mathcal{C}_1^{\sigma(\mu)}}/\mathrm{Ad}_{\sigma} G_\kappa], \quad  [\alpha']: G_\kappa \textsf{-Zip}^\mu = G_\kappa \textsf{-Zip}^{p\mu} \to [\prescript{}{1}{\mathcal{C}_1^{\varphi(\mu)}}/\mathrm{Ad}_{\sigma} G_\kappa],
		\]
		inducing homeomorphisms on underlying topological spaces. Moreover, one has $[\alpha']=\epsilon \circ [\alpha]$, where $[\epsilon]:  [\prescript{}{1}{\mathcal{C}_1^{\sigma(\mu)}}/\mathrm{Ad}_{\sigma} G_\kappa] \cong [\prescript{}{1}{\mathcal{C}_1^{\varphi(\mu)}}/\mathrm{Ad}_{\sigma} G_\kappa]$ is the isomorphism induced by $\epsilon$.
		\item The following commutative diagram summarizes the situation, where $\prescript{}{1}{\mathcal{C}_1^{\varphi(\mu)}} / \mathrm{Ad}_{\sigma} G_{\kappa}$ is viewed as the common coarse moduli sheaf of $[\prescript{}{1}{\mathcal{C}_1^{\sigma(\mu)}} / \mathrm{Ad}_{\sigma} G_\kappa]$ and $[\prescript{}{1}{\mathcal{C}_1^{\varphi(\mu)}} / \mathrm{Ad}_{\sigma} G_\kappa]$ obtained by taking isomorphism classes,  
		\begin{equation}\label{Bigdiagram}
			\xymatrix{\mathrm{I}_+ \ar@/^4pc/[rrrr]^{\theta} \ar[dd] \ar[r]^{\gamma} & G_{\kappa}/U_-^{\sigma} \ar[r]_{\tilde{\alpha}} \ar@/^2pc/[rrr]^{\tilde{\alpha}' } \ar[dd] \ar[dr]_{\alpha} & \mathcal{C}_{1}^{\sigma(\mu)} \ar[d] && \mathcal{C}_1^{\varphi(\mu)} \ar[d] \\
				&& \prescript{}{1}{\mathcal{C}_1^{\sigma(\mu)}} \ar[d] \ar[rr]^{\epsilon}_{\cong} && \prescript{}{1}{\mathcal{C}_1^{\varphi(\mu)}} \ar[d] \\
				S \ar[drr]_{\eta} \ar[r]^{\zeta} & G_\kappa \textsf{-Zip}^\mu \ar[r]^{[\alpha] \ \ \ } & [\prescript{}{1}{\mathcal{C}_1^{\sigma(\mu)}}/\mathrm{Ad}_{\sigma} G_\kappa] \ar[rr]^{[\epsilon]}_{\cong} \ar[d] && [\prescript{}{1}{\mathcal{C}_1^{\varphi(\mu)}}/\mathrm{Ad}_{\sigma} G_\kappa] \ar[lld] \\
				&& \prescript{}{1}{\mathcal{C}_1^{\varphi(\mu)}} / \mathrm{Ad}_{\sigma} G_{\kappa} &&.}
		\end{equation}
		\item If we write $\rho: S \to [\prescript{}{1}{\mathcal{C}_1^{\varphi(\mu)}}/\mathrm{Ad}_{\sigma} G_\kappa]$ for the composition morphism as indicated in the diagram above, then the geometric fibres of $\rho$ yield the Ekedahl-Oort strata of $S$.
	\end{enumerate}
\end{theorem}

\begin{proof}
	The assertions (1) and (2) follow from applying Corollary \ref{Cor:Embed} for the cocharacters $\sigma(\mu)$ and $\varphi(\mu)$ here. The assertion (3) follows from applying \eqref{Thm:keyRepMulp} of Theorem \ref{Thm:keyRep} for $\varphi(\mu) = p \sigma(\mu)$. The assertion (4) comes from applying Corollary \ref{Cor:Comp} for the cocharacter $\mu$ and $p \mu$ respectively, noting that $\mathsf{E}_{\mu}^{\mathrm{PWZ}} = \mathsf{E}^{\mathrm{PWZ}}_{p\mu}$. The commutativity of the diagram follows by construction. The last assertion is a consequence of the fact that $\epsilon$ is a homeomorphism and the main result of \cite{ChaoZhangEOStratification}. 
\end{proof}

\begin{remark}
	The commutativity of the top triangle in \eqref{Bigdiagram} implies that, combined with results from \cite{Yan24}, we can reconstruct the morphism $\theta$, which is established in \cite{Yan18} using $G$-adapted deformations of Breuil-Kisin windows. In other words, we see the composition $\tilde{\alpha}' \circ \gamma$ as a new construction of $\theta$. 
	Comparing with \cite{Yan18}, we observe several improvements regarding the maps $\theta$ and $\eta$: 
	\begin{enumerate}
		\item The morphism of \emph{sheaves} $\eta: S \to \prescript{}{1}{\mathcal{C}_1^{\varphi(\mu)}}/\mathrm{Ad}_{\sigma} G_\kappa$ constructed in loc. cit. is updated to a morphism of \emph{algebraic stacks}, $\rho: S \to [\prescript{}{1}{\mathcal{C}_1^{\varphi(\mu)}}/\mathrm{Ad}_{\sigma} G_\kappa]$.
		\item Up to language, it is essentially shown in loc. cit. that the morphism $[\alpha]$ induces a \emph{bijection} between underlying topological spaces. It is now clear that this bijection is, in fact, a \emph{homeomorphism.}
		\item Although verifying that $\theta = \tilde{\alpha}' \circ \gamma$ is trivial, our new approach of structuring $\theta$ as the composition $\tilde{\alpha} \circ \gamma$ clarifies its structure significantly; for instance, it was not clear from loc. cit. that the map $\theta$ factors through $\gamma$, and likewise, it was not obvious from loc. cit. that the map $\eta$ factors through the zip period map $\zeta$.
	\end{enumerate}
	
	\begin{remark}
		We view the morphism $\rho: S \to [\prescript{}{1}{\mathcal{C}_1^{\varphi(\mu)}}/\mathrm{Ad}_{\sigma} G_\kappa]$ as a variant of the zip period map for $S$. We find $\rho$ and its covering \[\tilde{\rho}:= \epsilon \circ \alpha \circ \gamma: \mathrm{I}_+ \to \prescript{}{1}{\mathcal{C}_1^{\varphi(\mu)}},\] conceptually interesting, as it connects the number field side with the function field side: it is a morphism from the mod $p$ reduction of a Shimura variety to the moduli stack of 1-1 truncated local $G_\kappa$-shtukas of the same type up to Frobenius twist, whose geometric fibres yield the Ekedahl-Oort strata of $S$. In fact, we show in \cite{yan25intperdmap} that the map $\zeta: S \to G_\kappa \textsf{-Zip}^\mu$ factors though the moduli stack $[\prescript{}{1}{\mathcal{C}_1^{\mu}}/\mathrm{Ad}_{\sigma} G_\kappa]$ in prismatic topology of $S$. 
	\end{remark}

\end{remark}

\end{document}